\documentclass[a4paper,12pt]{article}
\usepackage{amsmath}
\usepackage[utf8]{inputenc}
\usepackage{fullpage}
\usepackage{amssymb}
\usepackage{amsfonts}
\usepackage{amsthm}
\usepackage{graphicx}
\usepackage{cancel}
\usepackage{float}
\usepackage{cite}
\usepackage{algorithm2e}
\usepackage{comment}
\usepackage{enumitem}
\usepackage{mathrsfs}
\newtheoremstyle{dotless}{}{}{\itshape}{}{\bfseries}{}{ }{}
\theoremstyle{dotless}
\newtheorem{theorem}{Theorem}[section]

\newtheorem{lemma}{Lemma}[section]

\theoremstyle{definition}

\theoremstyle{definition}
\newtheorem{construction}{Construction}[section]

\theoremstyle{definition}
\newtheorem{notation}{Notation}[section]

\DeclareMathOperator{\diag}{diag}

\title{Characterization of graphs attaining the maximum signless Laplacian spectral radius under forbidden cycles and theta graphs}

\author{Mainak Basunia\thanks{Department of Mathematics, Indian Institute of Technology Kharagpur, Kharagpur 721302, India. Email: leo28mynnix@gmail.com}\and Pratima Panigrahi\thanks{Department of Mathematics, Indian Institute of Technology Kharagpur, Kharagpur 721302, India. Email: pratima@maths.iitkgp.ac.in}}

\date{}
\usepackage[a4paper,margin=1in]{geometry}
\begin{document}
\maketitle
\baselineskip=0.22in

\begin{abstract}
\noindent Spectral Tur\'an-type problems ask how the absence of prescribed
subgraphs constrains the spectral radius of a matrix associated with
a graph. Given a family of graphs $\mathcal{F}$, a graph is called
$\mathcal{F}$-free if it contains no member of $\mathcal{F}$ as a
subgraph. The theta graph $\theta(l_1,\ldots,l_k)$ consists of $k$
internally disjoint paths of lengths $l_1,\ldots,l_k$ with two common end
vertices. In this paper, we study two spectral Tur\'an-type
extremal problems for the signless Laplacian spectral radius. First, among all $\{C_3,C_4\}$-free graphs of fixed order with no
pendant vertices, we determine the maximum signless Laplacian
spectral radius and uniquely characterize the extremal graph attaining it. The extremal structure
exhibits a parity phenomenon: odd and even orders give rise to two
distinct graph families. These results, in particular, sharpen a
recent general upper bound for this class given by Liu and Wang (2026). Next, we obtain the corresponding extremal results for all
$\{\theta(1,2,2),\theta(1,2,3)\}$-free graphs of fixed size with no
pendant vertices when the size is congruent to $1$ modulo $3$ and $2$ modulo
$3$, again obtaining unique but structurally different maximizing graphs in the two cases. Together with the previously known result for sizes congruent
to $0$ modulo $3$ by Liu and Wang (2026), this completes the fixed-size problem
across all three congruence classes modulo $3$.

\medskip \noindent \textbf{Keywords:} Spectral Tur\'an-type problem; forbidden subgraph; $\{C_3, C_4\}$-free graph; theta graph; signless Laplacian spectral radius; extremal graph

\medskip \noindent {\bf 2020 Mathematics Subject Classification:} 05C50, 05C35
\end{abstract}


\section{Introduction}\label{sec1}

Every graph $G$, with vertex set $V(G)$ and edge set $E(G)$, taken into consideration in this work is connected, simple, and undirected. The order and size of $G$, denoted by $n$ and $m$, are the number of vertices and edges of $G$, respectively. The neighborhood of a vertex $v \in V(G)$, denoted by $N_G(v)$, is the set of all vertices adjacent to $v$. The closed neighborhood of $v$ is $N_G[v] = N_G(v) \cup \{v\}$. The degree of the vertex $v$, denoted by $d_G(v)$, is $|N_G(v)|$. The maximum and minimum vertex degrees of $G$ are denoted by $\Delta(G)$ and $\delta(G)$, respectively. If there is no ambiguity, we refer $d_G(v)$, $\delta(G)$, $\Delta(G)$, $N_G(v)$ simply as $d(v)$, $\delta$, $\Delta$, $N(v)$, respectively. Given a set $S \subseteq V(G)$, let $G[S]$ denote the subgraph of $G$ induced by $S$, and for any $v\in V(G)$, let $N_S(v)=N(v) \cap S$, $d_S(v)=|N_S(v)|$, and $N^2(v)$ be the set of all vertices in $G$ which are at distance two from $v$. For two disjoint subsets $V_1, V_2 \subseteq V(G)$, let $e(V_1, V_2)$ denote the number of edges with one end vertex in $V_1$ and the other in $V_2$. Also, let $e(V_1)$ denote the number of edges with both end vertices in $V_1$.

For a square matrix $M$ whose eigenvalues are all real, we denote the largest eigenvalue of $M$ by $\lambda_{\max}(M)$. Moreover, the \emph{spectral radius} of $M$ is the maximum of modulus of all eigenvalues. The \emph{spectral norm} of $M$ is defined by $\lVert M\rVert=\sqrt{\lambda_{\max}(M^T M)}$. For a graph $G$, the signless Laplacian matrix of $G$ is $Q(G) = D(G) + A(G)$, where $A(G)$ is the adjacency matrix of $G$ and $D(G)$ is the diagonal matrix with the diagonal entries as vertex degrees of $G$. The spectral radius of $Q(G)$, called the \emph{signless Laplacian spectral radius} or the $Q$-index of $G$, is denoted by $q(G)$. If $G$ is connected, then $Q(G)$ is nonnegative and irreducible. Therefore, if \(G\) is connected, then, by the Perron--Frobenius theorem \cite{matrix_by_horn_johnson}, $\lambda_{\max}(Q(G))=q(G)$, and it is a simple eigenvalue and admits a unique positive unit eigenvector $X(G) = (x_1, x_2, \ldots, x_n)^T$, which is referred to as the \emph{Perron vector} of $Q(G)$.

We use the standard notations $C_n$, $P_n$, $K_n$, and $K_{a,n-a}$ to denote the cycle, path, complete graph, and complete bipartite graph of order $n$, respectively. The graph $K_{1,n-1}$ is referred to as the star graph. For positive integers $l_i$, $1 \le i \le k$ and $k\ge 3$, the \emph{theta graph} $\theta(l_1,l_2, \ldots, l_k)$ consists of $k$ internally disjoint paths of length $l_i$, which have one common end vertex $x$ and the another common end vertex $y$ ($\ne x$). The vertices $x$ and $y$ are referred to as the \emph{terminal vertices}, and the $k$ internally disjoint paths are referred to as \emph{constituent paths} of the theta graph. If for some $j \in \{1, 2, \ldots, k\}$, there are $r$ ($1\leq r \leq k$) paths of length $l_j$ in the theta graph, then we use ${l_j}^{r}$ in the notation. For example, $\theta(2,3^4,5^2)$ is the graph $\theta(2,3,3,3,3,5,5)$. When $n\geq 5$ is an odd integer, the \emph{friendship graph} $F_n$ of order $n$ is composed of $\frac{n-1}{2}$ triangles having precisely one vertex (called the \emph{central vertex}) common to all of them.

\begin{construction}
    We construct the following two graphs which will be used in our results.
\begin{enumerate}[topsep=4pt,itemsep=0pt]
\renewcommand{\labelenumi}{(\roman{enumi})}
    \item Let $P_1$ and $P_2$ be two constituent paths of length 3 in $\theta(2,3^{\,r-1})$, $r\ge 3$, with $x$ and $y$ as the terminal vertices. The graph $T_r$ of order $2r+2$ is obtained from $\theta(2,3^{\,r-1})$ by introducing a new vertex $w$ and making $w$ adjacent to the neighbor of $x$ in $P_1$ and to the neighbor of $y$ in $P_2$ (see Figure \ref{fi12}(a)).
    \item For $r\ge 4$ and odd $k \ge 5$, the graph $H_k^{(r)}$ of order $k+r-1$ is obtained from $F_k$ by identifying the central vertex of $F_k$ with a vertex of $C_r$ (see Figure \ref{fi12}(b)).
\end{enumerate}    
\end{construction}
\begin{figure}[ht]
    \centering
    \includegraphics[scale=.4]{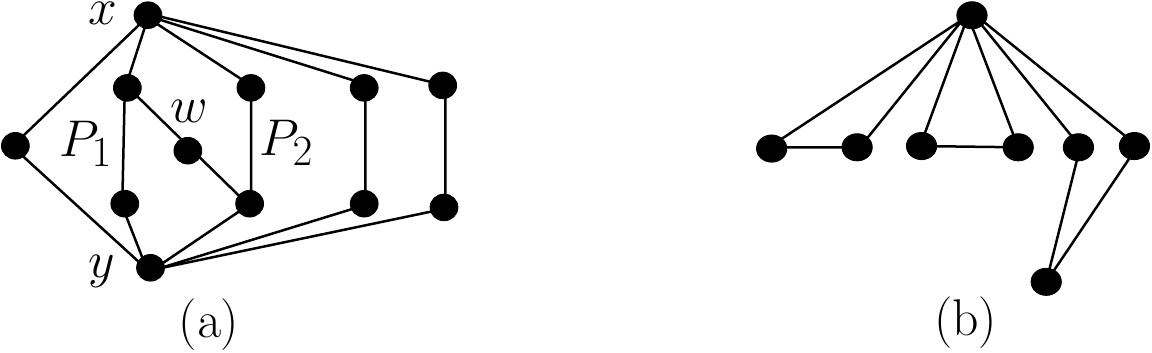}
    \caption{(a) The graph $T_5$ and (b) the graph $H_5^{(4)}$.}
    \label{fi12}
\end{figure}

Let $\mathcal{F} = \{F_1, F_2, \ldots, F_k\}$ be a family of graphs. A graph $G$ is considered $\mathcal{F}$-free if it does not contain any member of $\mathcal{F}$ as a subgraph. In particular, if $\mathcal{F} = \{F\}$, then $G$ is referred to as $F$-free. In 2010, Nikiforov~\cite{spectral_rad_without_by_nikiforov_2010} proposed the spectral Tur\'an type problem, which seeks to determine the maximum spectral radius among all $F$-free graphs of fixed order $n$. This problem is also known as the Brualdi-Solheid-Tur\'an type problem~\cite{spec_rad_by_brualdi_solheid}. It has been extensively studied for various classes of forbidden subgraphs, including complete graphs \cite{signless_2k3_by_zhang_wang, maxima_3k3_by_zhang_wang}, cycles and paths of prescribed lengths \cite{spectral_rad_without_by_nikiforov_2010}, wheels \cite{maximum_spec_rad_wheel_free_by_zhao}, intersecting cycles \cite{maximum_spec_rad_friendship_by_cioaba,spectral_no_int_odd_cycles_by_li}, and linear forests \cite{spectral_linear_forests_by_chen}. Analogous extremal problems for the signless Laplacian spectral radius are studied in \cite{maxima_Q_index_by_freitas_nikiforov_patuzzi, maxima_q_without_long_paths_nikiforov, signless_no_intersecting_triangles_by_zhao, spectral_disjoint_cliques_by_ni, alpha_by_chen, spectral_without_trees_by_hou, spectral_h2k_by_yuan}. 

Recently, Liu and Wang \cite{maxima_Q_delta_by_liu_wang} obtained an
upper bound on the signless Laplacian spectral radius of
$\{C_3,C_4\}$-free graphs of fixed order with no pendant vertices,
with equality only for the graph of order $5$.

\begin{theorem}\textup{\cite{maxima_Q_delta_by_liu_wang}}\label{r4}
Let $G$ be a $\{C_3, C_4\}$-free graph of order $n \geq 5$, with no pendant vertices. Then $q(G) \leq \frac{n+3}{2}$. Equality holds if and only if $G \cong C_5$ and $n=5$.
\end{theorem}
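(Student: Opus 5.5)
The plan is to bound $q(G)$ through the standard inequality $q(G)\le \max_{uv\in E(G)}\{d(u)+d(v)\}$, or more precisely through the sharper row-sum bound $q(G)\le \max_{v}\bigl(d(v)+m(v)\bigr)$. Here $m(v)=\frac{1}{d(v)}\sum_{u\in N(v)}d(u)$ is the average degree of the neighbours of $v$. Both bounds follow from Perron--Frobenius applied to $D^{-1}QD$ (equivalently, from comparing $q$ with the row sums of a similar matrix). The structural input is that $G$ is $\{C_3,C_4\}$-free, so for any vertex $v$ the neighbourhood $N(v)$ is independent. Moreover, any two neighbours of $v$ share no common neighbour other than $v$. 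Hence the sets $N(u)\setminus\{v\}$, $u\in N(v)$, are pairwise disjoint and disjoint from $N[v]$.

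\textbf{Key counting step.} Fix $v$ and count vertices in $N[v]\cup N^2(v)$:
\[
n\ \ge\ 1+d(v)+\sum_{u\in N(v)}(d(u)-1),
\qquad\text{so}\qquad
\sum_{u\in N(v)} d(u)\le n-1 .
\]
Hence $d(v)\,m(v)\le n-1$. Since $\delta\ge 2$ (no pendant vertices and $G$ connected, $n\ge5$), each neighbour $u$ of $v$ satisfies $d(u)\ge 2$, which gives $2d(v)\le n-1$. Therefore
\[
d(v)+m(v)\ \le\ d(v)+\frac{n-1}{d(v)},\qquad 2\le d(v)\le \frac{n-1}{2}.
\]
The function $t\mapsto t+(n-1)/t$ is convex, so its maximum on this interval is attained at an endpoint. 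At $t=2$ it equals $2+\frac{n-1}{2}=\frac{n+3}{2}$. At $t=\frac{n-1}{2}$ it equals $\frac{n-1}{2}+2=\frac{n+3}{2}$. Thus $q(G)\le\frac{n+3}{2}$.

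\textbf{Equality.} This is the step I expect to be the main obstacle. If $q=\frac{n+3}{2}$, then the row-sum bound must be tight. By Perron--Frobenius for the irreducible matrix $D^{-1}QD$, this forces $d(v)+m(v)=\frac{n+3}{2}$ for \emph{every} vertex $v$. For each $v$ this means two things. First, $N[v]\cup N^2(v)=V(G)$ with equality in the count above. Second, either $d(v)=2$ and $\sum_{u\in N(v)}d(u)=n-1$, or $d(v)=\frac{n-1}{2}$ with all neighbours of degree $2$. These two cases coincide when $n=5$.

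Now suppose $n>5$, and take a vertex $v$ of maximum degree. If $\Delta=\frac{n-1}{2}>2$, then every neighbour of $v$ has degree $2$. But a degree-$2$ neighbour $u$ must itself satisfy $d(u)=2$ with $\sum_{w\in N(u)}d(w)=n-1$. Since $v\in N(u)$, the other neighbour of $u$ would have degree $n-1-\frac{n-1}{2}=\frac{n-1}{2}$. Applying the same reasoning to that vertex, and counting edges, should yield a contradiction: the $\frac{n-1}{2}$ degree-2 neighbours of $v$ together with their second neighbours would need to occupy $N^2(v)$, which has only $\frac{n-1}{2}$ vertices, and each such second neighbour has large degree, forcing a $C_4$. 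If instead $\Delta=2$, then $G$ is a cycle $C_n$ with $q=4$, and $4=\frac{n+3}{2}$ only for $n=5$.

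So equality forces $n=5$, where the only $\{C_3,C_4\}$-free graph with $\delta\ge2$ is $C_5$. Conversely, $q(C_5)=4=\frac{5+3}{2}$. The delicate part is making the degree-propagation contradiction clean for $\Delta=\frac{n-1}{2}$, $n\ge7$.
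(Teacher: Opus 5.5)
Your inequality argument is the paper's own machinery. The paper only quotes this theorem from Liu and Wang, but Lemma~\ref{lem11} is exactly your counting step: $N(v)$ is independent and each vertex of $\overline{N[v]}$ has at most one neighbour in $N(v)$, so $\sum_{w\in N(v)}d(w)\le n-1$ and $d(v)\le\frac{n-1}{2}$. Fed into Lemma~\ref{lem2}, with the convex-endpoint evaluation used in Case~3 of Theorems~\ref{r5} and~\ref{r6}, both endpoints give $\frac{n+3}{2}$.

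The paper gives no equality argument, so that part of yours stands on its own. Your reduction is sound. For the irreducible matrix $D^{-1}QD$, $q$ equals the maximal row sum only if all row sums $d(v)+m(v)$ coincide. So at every vertex $v$ you have $V(G)=N[v]\cup N^2(v)$, $\sum_{w\in N(v)}d(w)=n-1$, and, by strict convexity, $d(v)\in\{2,\frac{n-1}{2}\}$, with all neighbours of degree $2$ when $d(v)=\frac{n-1}{2}$. For even $n$ this already forces $G\cong C_n$, with $q=4<\frac{n+3}{2}$.

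The closing step is not right as stated: no $C_4$ is forced. For $n=7$ your configuration is a vertex $v$ of degree $3$, its three degree-$2$ neighbours, and their three far neighbours of degree $3$. This contains no $4$-cycle at all; the far neighbours induce a triangle.

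Finish instead by propagating the degree condition once more:
\begin{itemize}
\item Let $u\in N(v)$ and let $u'$ be its other neighbour, so $d(u')=\frac{n-1}{2}\ge 3$. Then $u'$ has a neighbour $w\neq u$.
\item $w\neq v$, since $u'\notin N(v)$ (otherwise $v\,u\,u'$ is a triangle).
\item $w\notin N(v)$, since otherwise $v\,u\,u'\,w$ is a $C_4$.
\item Since $V(G)=N[v]\cup N^2(v)$, we get $w\in N^2(v)$. Thus $w$ is the far neighbour of some other degree-$2$ vertex $u_j\in N(v)$, and the same reasoning gives $d(w)=\frac{n-1}{2}\neq 2$.
\item This contradicts the equality condition at $u'$: as $d(u')=\frac{n-1}{2}>2$, every neighbour of $u'$ must have degree $2$.
\end{itemize}
Equivalently, $G[N^2(v)]$ would be complete on $\frac{n-1}{2}\ge 3$ vertices, so the forbidden subgraph is a $C_3$. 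With this repair your proof is complete.
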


The restriction on pendant vertices makes the corresponding extremal
problem structurally more constrained. In particular, it rules out
the simplest star-like concentration of edges and makes the forbidden
subgraph condition play a more decisive role in determining the
maximizing graph. This naturally leads to the problem of determining
the sharp bound in Theorem~\ref{r4} and identifying the corresponding
maximizing graphs.

Another important variant of the spectral Tur\'an type problem is the Brualdi-Hoffman-Tur\'an type problem \cite{spectral_rad_by_brualdi_hoffman_1985}, where the number of edges is fixed. More precisely, this problem asks for the maximum spectral radius among all $\mathcal{F}$-free graphs of fixed size $m$. One of the earliest results in this direction is due to Nosal~\cite{eigenvalues_by_nosal_1970}, who solved the problem for triangle-free graphs. Later, Nikiforov~\cite{some_inequalities_nikiforov_2002} extended this to complete graphs. Since then, this extremal problem has been studied from several different perspectives \cite{sharp_upper_bound_on_spectral_size_by_min, sharp_upper_bound_spec_rad_theta133_by_liu, spectral_extrema_size_forbidden_fan_friendship_theta_by_li, spectral_rad_size_forbidden_by_liu_wang_2024, spectral_rad_forbidden_c7_by_lu, a_bht_problem_on_cycles_by_li, maximum_spec_rad_theta_by_gao, characterizing_by_basunia, spectral_f2_free_by_chen, maximum_theta_by_gao_2026_dam, spec_rad_turan_by_rehman_pirzada_akce, maximum_h33-even_by_pirzada, spectral_intersecting_cycle_by_rehman_pirzada}. In particular,
theta-free spectral extremal problems have attracted considerable
attention
\cite{strengthening_by_zhai,
forbidden_theta_non_bipartite_by_li_sun_2023,
maximizing_signless_theta_by_liu_wang,
maximum_alpha_spec_rad_c_by_pirzada_rehman,
extensions_by_sun_li_wei,spectral_extrema_fixed_size_by_zhai_lin_shu,
maximum_spec_rad_forbidden_by_fang}.

For the signless Laplacian spectral radius, Liu and Wang
\cite{maxima_Q_delta_by_liu_wang} determined the maximizing graph among
all $\{\theta(1,2,2),\theta(1,2,3)\}$-free graphs of size
$m=3k$, $k\geq3$, with no pendant vertices.

\begin{theorem}\textup{\cite{maxima_Q_delta_by_liu_wang}}\label{r3}
Let $G$ be a $\{\theta (1, 2, 2), \theta (1, 2, 3)\}$-free graph of size $m = 3k$, $k \geq 3$, with no pendant vertices. Then $q(G) \leq q\big(F_{\frac{2m+3}{3}}\big)$. Equality holds if and only if $G \cong F_{\frac{2m+3}{3}}$.
\end{theorem}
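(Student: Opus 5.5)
We need to prove Theorem \ref{r3}, the Liu–Wang result. For a graph $G$ with $m=3k$ edges, no pendant vertices, and $\{\theta(1,2,2),\theta(1,2,3)\}$-free, we show that $q(G)\le q(F_{(2m+3)/3})$. Here $F_{2k+1}$ consists of $k$ triangles sharing one vertex, so it has $3k$ edges.

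The plan is to take an extremal graph $G$ and first pin down $q(F_{2k+1})$. The friendship graph has a quotient matrix on the central vertex and the rest, namely
\[
\begin{pmatrix} 2k & 2k\\ 1 & 3\end{pmatrix}.
\]
So $q(F_{2k+1})$ is the largest root of $x^2-(2k+3)x+4k=0$. This root lies strictly between $2k+1$ and $2k+2$; roughly $q\approx 2k+1+\frac{2k-2}{2k}$.

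Next we use the standard bound $q(G)\le \max_{uv\in E}(d(u)+d(v))$, or better, the Merris-type bound $q\le\max_v\big(d(v)+m(v)\big)$, where $m(v)$ is the average degree of the neighbours of $v$. If $q(G)\ge q(F_{2k+1})>2k+1$, then some vertex $u$ has large degree and large neighbour sum. The forbidden subgraphs control the structure near $u$:
\begin{itemize}
\item $\theta(1,2,2)=K_4-e$ forbidden means no edge lies in two triangles.
\item $\theta(1,2,3)$ forbidden means a triangle plus a path of length 3 between two of its vertices is excluded.
\end{itemize}
Consequently, $G[N(u)]$ is a matching. In addition, no vertex outside $N[u]$ is adjacent to both ends of a matching edge. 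More strongly, any vertex $w\notin N[u]$ has few neighbours in $N(u)$, since two neighbours in distinct triangles already create $\theta(1,2,3)$-type configurations through $u$.

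We then carry out the edge count. Write $d=d(u)$. Each vertex of $N(u)$ has degree at least $2$, because there are no pendant vertices. Bound $\sum_{v\in N(u)} d(v)\le d+2e(N(u))+e(N(u),N^2(u))$. Combine this with $m=3k=d+e(N(u))+e(N(u),\overline{N[u]})+e(\overline{N[u]})$. Then use the eigen-equation
\[
(q-d)x_u=\sum_{v\sim u}x_v,
\]
together with the analogous identities for the $x_v$, to show that
\[
q\le \frac{d+3+\sqrt{(d-3)^2+8\cdot\#\{\text{extra edges}\}}}{2}-\text{(small)}.
\]
The goal is to show that this falls short of $q(F_{2k+1})$ unless $N(u)$ is a perfect matching with $d=2k$ and no further edges.

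For the equality case, assume $d=2k$. Then $N(u)$ must be covered by triangles, because otherwise some neighbour of $u$ needs an edge outside $N[u]$ to avoid degree $1$, and this costs edges. Counting then forces $G\cong F_{2k+1}$.

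The main obstacle is the intermediate regime, where $d$ is moderately large but many edges lie outside $N[u]$, for example graphs containing long cycles attached at $u$. There the crude degree bounds do not beat $2k+1+\frac{k-1}{k}$. To handle this regime, I would first try a Kelmans-type or edge-shifting transformation. Moving an edge $vw$ with $v,w\notin N[u]$ so that it pivots at $u$, when $x_u\ge x_w$, strictly increases $q$. We must check that this operation preserves freeness and the absence of pendant vertices, relocating pendant-creating edges in pairs to form new triangles at $u$. This reduces the problem to graphs in which every edge meets $N[u]$, and the counting argument above then finishes the proof. The hypothesis $k\ge3$ is needed to exclude small exceptions such as $C_9$ and the theta-free graphs on $6$ edges.
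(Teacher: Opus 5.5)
The paper does not prove this statement itself (it is quoted from Liu and Wang), but the method it uses for Theorems~\ref{r1} and~\ref{r2} carries over directly to $m=3k$. Measured against that, your proposal has a genuine gap: the quantitative core is never carried out, and the ``main obstacle'' you identify comes from a miscalculation.

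Your square-root estimate for $q$, with its unspecified ``(small)'' correction, is neither defined nor derived. Moreover $q(F_{2k+1})=2k+1+\delta$ with $\delta^2+(2k-1)\delta-2=0$, so $\delta\approx\frac{2}{2k-1}\to 0$, not $\frac{k-1}{k}$.

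The idea you are missing is that you only have to beat $2k+1$. Since $\Delta(F_{2k+1})=2k$, Lemma~\ref{lem1} gives $q(F_{2k+1})>2k+1$. The bound you already named (Lemma~\ref{lem2}) then gives $q(G)\le 2k+1$ for every other $G$:
\begin{itemize}
\item Take $u$ maximizing $M(v)$, and write $G[N]=tK_2\cup sK_1$ and $e'=e(N,\overline N)$. From $e'\ge s$ you get $d(u)\le 2k$. From $e'\le m-d(u)-t$ and $t\le d(u)/2$ you get
\[
q(G)\le M(u)=d(u)+\frac{d(u)+2t+e'}{d(u)}\le d(u)+\frac{3k}{d(u)}+\frac12 .
\]
\item By convexity this is at most $2k+1$ on $2\le d(u)\le 2k-2$. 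The endpoint $d(u)=2$ needs exactly $k\ge 3$. So $k\ge3$ has nothing to do with excluding $C_9$, which has $k=3$ and is covered by the theorem.
\item For $d(u)=2k-1$, Lemma~\ref{lem7} gives $|\overline N|\le 1$. If $\overline N=\emptyset$, then $t=k+1>d(u)/2$, which is impossible. A single outside vertex can only be adjacent to unmatched vertices of $N$, so $e'=s$ and $M(u)=d(u)+2=2k+1$.
\item For $d(u)=2k$, Lemma~\ref{lem7} forces $\overline N=\emptyset$, hence $s=0$ and $G\cong F_{2k+1}$. This is your equality argument.
\end{itemize}
So there is no intermediate regime that needs extra machinery.

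The edge-shifting reduction you propose for that regime is therefore unnecessary, and as stated it is not valid.
\begin{itemize}
\item Lemma~\ref{lem3} requires $x_u\ge x_w$. You have no reason to expect this for the vertex maximizing $M(v)$, and you do not say what to do when it fails.
\item Replacing $vw$ by $vu$ need not keep $G$ in the class. Suppose $v\notin N[u]$ is adjacent to an endpoint $a$ of a matching edge $ab$ of $G[N]$; this is allowed, e.g.\ when $a,v,w$ form a second triangle at $a$. After the move, $u,a,b,v$ span $\theta(1,2,2)$, and $w$ may become pendant. ``Relocating pendant-creating edges in pairs'' is never specified.
\item Your claim that every $w\notin N[u]$ has few neighbours in $N$ is false. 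Such a $w$ may be adjacent to all $s$ unmatched vertices of $N$, since no triangle arises. What is true, and all that the argument above needs, is that an outside vertex adjacent to a matched vertex of $N$ has no other neighbour in $N$.
\end{itemize}
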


Motivated by Theorem~\ref{r4}, in this paper, we first consider the fixed-order
extremal problem for $\{C_3,C_4\}$-free graphs with no pendant
vertices. We determine, for each order in the considered range, the
maximum signless Laplacian spectral radius and characterize the unique
graph attaining it. The problem exhibits a
clear parity phenomenon: the odd- and even-order cases lead to two
structurally distinct extremal graph families, which we establish
in two separate theorems in Section \ref{sec3}. In particular, these results sharpen the
general upper bound given by Liu and Wang
\cite{maxima_Q_delta_by_liu_wang} in Theorem \ref{r4}.

We next consider the fixed-size problem for
$\{\theta(1,2,2),\theta(1,2,3)\}$-free graphs with no pendant
vertices. Theorem~\ref{r3} settles the case $m\equiv0\pmod 3$,
whereas the remaining two congruence classes are unresolved. We settle
these cases by determining the maximum signless Laplacian spectral
radius and characterizing the unique maximizing graphs for
$m\equiv1\pmod3$ and $m\equiv2\pmod3$, respectively, again in two
separate theorems in Section \ref{sec4}. The two congruence classes yield distinct extremal
graph families. Consequently, together with Theorem~\ref{r3}, our
results complete the fixed-size problem for all three congruence
classes modulo $3$.


\section{Preliminaries}\label{sec2}

In this section, we introduce some more notations and recall some useful lemmas. We use $I_r$, $\mathbf{1}_r$ (or simply $\mathbf{1}$) and $\mathbf{0}_{r \times s}$ (or simply $\mathbf{0}$) to denote the identity matrix of order $r$, the all-one column vector of order $r$, and the all-zero matrix of order $r\times s$, respectively. Let $M$ be an $n\times n$ real symmetric matrix, and let $\mathcal{P}: V=V_1\cup V_2\cup\cdots\cup V_k$ be a partition of the index set $V=\{1,2,\ldots,n\}$, where $1\leq k\leq n$. With respect to the partition $\mathcal{P}$, suppose that $M$ has the block representation
\begin{align*}
    M =
    \left(\begin{smallmatrix}
        M_{11} & M_{12} & \cdots & M_{1k} \\
        M_{21} & M_{22} & \cdots & M_{2k} \\
        \vdots & \vdots & \ddots & \vdots \\
        M_{k1} & M_{k2} & \cdots & M_{kk}
    \end{smallmatrix}\right),
\end{align*}
where $M_{ij}$ is the submatrix whose rows are indexed by $V_i$ and columns are indexed by $V_j$. If, for each $i,j\in\{1,2,\ldots,k\}$, every row of $M_{ij}$ has the same sum, say $b_{ij}$, then the partition $\mathcal{P}$ is called equitable, and the matrix $B = (b_{ij})_{k \times k}$ is referred to as the quotient matrix associated to $M$ with respect to the equitable partition $\mathcal{P}$.

\begin{lemma}\textup{\cite{spectra_by_brouwer_haemers,algebraic_graph_theory_by_godsil,distance_signless_laplacian_by_atik}}\label{lem4}
    Suppose $B$ is a quotient matrix corresponding to an equitable partition of a real symmetric matrix $M$. Then every eigenvalue of $B$ is also an eigenvalue of $M$. Moreover, if $M$ is irreducible and nonnegative, then the largest eigenvalues of $M$ and $B$ coincide, that is, $\lambda_{\max}(M)=\lambda_{\max}(B)$.
\end{lemma}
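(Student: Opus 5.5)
The plan is to prove the two assertions of Lemma~\ref{lem4} separately: first the transfer of eigenvalues from $B$ to $M$ by lifting eigenvectors, then the equality of the largest eigenvalues via Perron--Frobenius.

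\textbf{Lifting eigenvectors.} Let $S$ be the $n\times k$ characteristic matrix of the partition $\mathcal{P}$: $S_{vi}=1$ if $v\in V_i$ and $0$ otherwise. Since $V_1,\ldots,V_k$ are disjoint and nonempty, the columns of $S$ have disjoint nonempty supports, so $S$ has full column rank $k$. Equitability states that for $v\in V_i$,
\[
\sum_{u\in V_j}M_{vu}=b_{ij},
\]
which in matrix form is $MS=SB$. Now take an eigenpair $By=\mu y$ with $y\neq 0$, and set $x=Sy$. Then $x\neq 0$ by full column rank, and
\[
Mx=MSy=SBy=\mu Sy=\mu x .
\]
So $\mu$ is an eigenvalue of $M$. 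The eigenvalues of $B$ are in fact real, although $B$ need not be symmetric. One way to see this: with $D=S^TS=\diag(|V_1|,\ldots,|V_k|)$ we have $B=D^{-1}S^TMS$, and this is similar to the symmetric matrix $D^{-1/2}S^TMSD^{-1/2}$. Alternatively, realness follows directly from the lifting, since every eigenvalue of $B$ is an eigenvalue of the symmetric matrix $M$.

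\textbf{Equality of largest eigenvalues.} Assume now that $M$ is nonnegative and irreducible. By Perron--Frobenius, $\lambda_{\max}(M)$ is simple and has a positive eigenvector $z$. The lifting step already gives $\lambda_{\max}(B)\le\lambda_{\max}(M)$, so it remains to show $\lambda_{\max}(M)$ is an eigenvalue of $B$. Using $MS=SB$, the symmetry of $M$, and $Mz=\lambda_{\max}(M)z$,
\[
B^T(S^Tz)=(SB)^Tz=(MS)^Tz=S^TMz=\lambda_{\max}(M)\,S^Tz .
\]
The vector $S^Tz$ has entries $\sum_{v\in V_i}z_v>0$, so it is nonzero. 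Hence $\lambda_{\max}(M)$ is an eigenvalue of $B^T$, and therefore of $B$. Combining the two inequalities gives $\lambda_{\max}(M)=\lambda_{\max}(B)$.

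The only delicate point is this reverse inequality. Lifting eigenvectors of $B$ alone cannot show that the top eigenvalue of $M$ is captured by $B$; it requires the positivity of the Perron vector to guarantee $S^Tz\neq 0$. An alternative route is to note that $B$ is nonnegative, so it has a nonnegative eigenvector $y$ for its spectral radius. Then $Sy$ is a nonnegative eigenvector of $M$, and a nonnegative eigenvector of an irreducible nonnegative matrix must be a Perron vector. Either argument closes the proof.
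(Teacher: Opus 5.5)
Your proof is correct: the identity $MS=SB$ for the characteristic matrix $S$ lifts every eigenpair of $B$ to one of $M$. For the reverse inequality, positivity of the Perron vector $z$ makes $S^Tz\neq 0$ an eigenvector of $B^T$ for $\lambda_{\max}(M)$, and your alternative via a nonnegative eigenvector of $B$ lifting to the Perron vector of $M$ also works. The paper gives no proof of its own and simply cites Brouwer--Haemers, Godsil and Atik; your argument is the standard one behind those references.
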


\begin{lemma}\label{lem8}
    Let $M$ be a real symmetric matrix of order $n$ and let $x_0$ be any real number. If $x_0 I_n -M$ is a positive definite matrix, then $x_0 > \lambda_{\max}(M)$.
\end{lemma}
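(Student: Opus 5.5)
We argue by contradiction, using the spectral theorem for real symmetric matrices. Since $M$ is real symmetric of order $n$, all of its eigenvalues $\lambda_1\ge \lambda_2\ge\cdots\ge\lambda_n$ are real, with $\lambda_1=\lambda_{\max}(M)$. Moreover, there is an orthonormal basis of $\mathbb{R}^n$ consisting of eigenvectors of $M$. Fix a unit eigenvector $u$ with $Mu=\lambda_{\max}(M)\,u$.

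Positive definiteness of $x_0 I_n - M$ means $y^T(x_0 I_n - M)y>0$ for every nonzero $y\in\mathbb{R}^n$. We apply this with $y=u$. Since $u^Tu=1$ and $u^TMu=\lambda_{\max}(M)\,u^Tu=\lambda_{\max}(M)$, we get
\begin{align*}
0< u^T(x_0 I_n - M)u = x_0\,u^Tu - u^TMu = x_0-\lambda_{\max}(M).
\end{align*}
This gives $x_0>\lambda_{\max}(M)$ directly, so the contradiction framing is not even needed.

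An equivalent route is to note that the eigenvalues of $x_0I_n-M$ are exactly $x_0-\lambda_i$, with the same eigenvectors. A symmetric matrix is positive definite precisely when all its eigenvalues are positive, so in particular $x_0-\lambda_1>0$.

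No step is a genuine obstacle. The only point needing care is that the eigenvector $u$ can be chosen real, which holds because $M$ is real symmetric. This ensures that the quadratic-form definition of positive definiteness over $\mathbb{R}^n$ applies to $u$.
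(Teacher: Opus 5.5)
Your proof is correct and essentially matches the paper's: the paper uses exactly your ``equivalent route'' (the eigenvalues of $x_0I_n-M$ are $x_0-\lambda_i(M)$, and these are all positive by positive definiteness). Your main argument just unpacks that fact for the top eigenvalue, by evaluating the quadratic form at a real unit eigenvector for $\lambda_{\max}(M)$; the contradiction framing at the start is unnecessary but harmless.
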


\begin{proof}
    Let $\lambda_i(M)$, $i=1,2, \ldots, n$, be the eigenvalues of the matrix $M$. Then eigenvalues of $x_0 I_n -M$ are $x_0 - \lambda_i(M)$, $i=1,2, \ldots, n$, by \cite{matrix_by_horn_johnson}. Since $x_0 I_n -M$ is positive definite, it yields $x_0 - \lambda_i(M) > 0$, $i=1,2, \ldots, n$, implying $x_0 > \lambda_{\max}(M)$.
\end{proof}

\begin{lemma}\textup{(Sylvester's criterion) \cite{matrix_by_horn_johnson}}\label{lem9}
    Let $M$ be a real symmetric matrix. Then $M$ is positive definite if and only if all of its leading principal minors are positive.
\end{lemma}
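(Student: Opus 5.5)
The statement to prove is Lemma~\ref{lem9} (Sylvester's criterion): a real symmetric $M$ is positive definite if and only if all its leading principal minors are positive. I will prove the two directions separately, with induction on the order $n$ for the converse.

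\emph{Necessity.} Suppose $M$ is positive definite. Write $M_k$ for the leading $k\times k$ principal submatrix. For any nonzero $y\in\mathbb{R}^k$, extend it by zeros to $\tilde y\in\mathbb{R}^n$. Then
\[
y^TM_ky=\tilde y^TM\tilde y>0,
\]
so $M_k$ is positive definite. Its eigenvalues are therefore all positive, as in the proof of Lemma~\ref{lem8} using the spectral theorem. Hence $\det M_k$, being the product of these eigenvalues, is positive.

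\emph{Sufficiency.} Assume every leading principal minor is positive, and induct on $n$. The case $n=1$ is immediate. For the inductive step, the hypothesis applies to $M_{n-1}$, so $M_{n-1}$ is positive definite.

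Let $\lambda_1\le\lambda_2\le\cdots\le\lambda_n$ be the eigenvalues of $M$. I claim at most one of them is nonpositive. If $\lambda_1,\lambda_2\le 0$, take orthonormal eigenvectors $u_1,u_2$ for them. Their span is $2$-dimensional, so it meets the hyperplane $\{z: z_n=0\}$ in a nonzero vector $z$. For this $z$ we get $z^TMz\le 0$, since $z$ is a combination of eigenvectors with nonpositive eigenvalues. But $z^TMz$ is also the quadratic form of $M_{n-1}$ evaluated at a nonzero vector, which is positive. This contradiction proves the claim.

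Thus $\lambda_2,\dots,\lambda_n>0$. Since $\det M=\prod_i\lambda_i>0$ by hypothesis, $\lambda_1>0$ as well. All eigenvalues are positive, so $z^TMz=\sum_i\lambda_i c_i^2>0$ for $z\ne 0$ written in an orthonormal eigenbasis, and $M$ is positive definite.

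The main obstacle is the dimension-counting step: showing that a $2$-dimensional nonpositive eigenspace must meet the hyperplane $z_n=0$ nontrivially. It follows from $\dim(U\cap W)\ge \dim U+\dim W-n = 2+(n-1)-n = 1$.

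As an alternative for the sufficiency step, one can use the block factorization
\[
M=\begin{pmatrix}M_{n-1}&b\\ b^T&c\end{pmatrix},
\]
with Schur complement $s=c-b^TM_{n-1}^{-1}b=\det M/\det M_{n-1}>0$. This yields a congruence of $M$ to $\diag(M_{n-1},s)$, which is positive definite.
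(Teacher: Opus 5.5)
The paper gives no proof of this lemma; it simply cites Sylvester's criterion from Horn and Johnson, so your argument supplies something the paper does not contain.

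Your proof is correct. Necessity follows by restricting the quadratic form to vectors supported on the first $k$ coordinates. Sufficiency goes by induction. Your dimension count ($\dim(U\cap W)\ge 1$, where $U$ is spanned by two orthonormal eigenvectors with nonpositive eigenvalues and $W=\{z:z_n=0\}$) is exactly the piece of Cauchy interlacing that the standard textbook argument uses. It shows $M$ has at most one nonpositive eigenvalue, and $\det M>0$ then rules that one out.

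Your alternative route deserves a remark in the context of this paper. Take $A=M_{n-1}$, $B=b$, $C=c$. Then your argument is precisely Lemma~\ref{lem10}(ii) combined with the factorization $\det M=\det M_{n-1}\cdot(c-b^TM_{n-1}^{-1}b)$. So within the paper's own toolkit, Lemma~\ref{lem9} follows from Lemma~\ref{lem10} by a one-line induction.

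The two routes trade off as follows. The eigenvalue argument needs only the spectral theorem and linear algebra dimension counting. The Schur-complement route avoids eigenvalues entirely. It is also the mechanism the authors use directly when they verify positive definiteness of $(k+1+\tfrac3k)I-C$ in the proof of Theorem~\ref{r6}.
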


\begin{lemma}\textup{\cite{geometric_by_gallier}}\label{lem10}
    Let $M$ be a real symmetric matrix of the form
    $M = \left(\begin{smallmatrix}
        A & B \\
        B^T & C
    \end{smallmatrix}\right)$.
    \begin{enumerate}[topsep=4pt,itemsep=0pt,label= {\upshape (\roman*)}]
        \item If $C$ is invertible, then $M$ is positive definite if and only if $C$ is positive definite and $A-BC^{-1}B^T$ is positive definite.
        \item If $A$ is invertible, then $M$ is positive definite if and only if $A$ is positive definite and $C-B^TA^{-1}B$ is positive definite.
    \end{enumerate}
\end{lemma}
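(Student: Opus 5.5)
The plan is to apply the Schur complement characterization of positive definiteness for symmetric $2\times 2$ block matrices, by reducing it to an explicit congruence transformation; I treat part (i), and part (ii) follows by symmetry.

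Assume first that $C$ is invertible, and set
\[
P=\begin{pmatrix} I & -BC^{-1} \\ \mathbf{0} & I\end{pmatrix}.
\]
$P$ is invertible with determinant $1$. Since $C$ is symmetric, $(C^{-1})^T=C^{-1}$, and a direct block multiplication gives
\[
PMP^T=\begin{pmatrix} A-BC^{-1}B^T & \mathbf{0} \\ \mathbf{0} & C\end{pmatrix}.
\]
To check this, first compute
\[
PM=\begin{pmatrix} A-BC^{-1}B^T & \mathbf{0}\\ B^T & C\end{pmatrix},
\]
then multiply on the right by
\[
P^T=\begin{pmatrix} I & \mathbf{0}\\ -C^{-1}B^T & I\end{pmatrix}.
\]

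The key observation is that congruence by an invertible matrix preserves positive definiteness. Indeed, $x^T(PMP^T)x=(P^Tx)^TM(P^Tx)$, and $P^Tx\neq 0$ whenever $x\neq0$, so $M$ is positive definite if and only if $PMP^T$ is. It remains to note that a block-diagonal symmetric matrix $\operatorname{diag}(S,C)$ is positive definite if and only if both $S$ and $C$ are. For the forward direction, restrict the quadratic form to vectors of the form $(u,0)$ and $(0,v)$. For the converse, write the form as $u^TSu+v^TCv$, which is positive whenever $(u,v)\neq 0$. Applying this with $S=A-BC^{-1}B^T$ gives (i).

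For (ii), assume $A$ is invertible and use instead
\[
P'=\begin{pmatrix} I & \mathbf{0}\\ -B^TA^{-1} & I\end{pmatrix}.
\]
Then
\[
P'MP'^T=\operatorname{diag}\bigl(A,\; C-B^TA^{-1}B\bigr),
\]
and the same two observations finish the argument. Alternatively, (ii) follows from (i) by conjugating $M$ with the permutation matrix that swaps the two block rows and columns; this is a congruence, so it preserves positive definiteness.

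There is no real obstacle. The only points needing care are the symmetry of $C^{-1}$ (respectively $A^{-1}$), which is what makes the off-diagonal blocks vanish, and the fact that the congruence matrix is invertible.
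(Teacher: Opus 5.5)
Your proof is correct. The congruence $PMP^T=\operatorname{diag}(A-BC^{-1}B^T,\,C)$ (and its analogue for (ii)), combined with the invariance of positive definiteness under congruence by an invertible matrix and the block-diagonal criterion, is the standard Schur-complement argument in the cited source by Gallier; the paper states this lemma without proof and relies on that reference, so your approach is essentially the same.
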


\begin{lemma}\textup{\cite{eigenvalue_by_cvetkovic_rowlinson_simic, A_Q-merging_by_Nikiforov}}\label{lem1}
    If $G$ is a graph that contains at least one edge, then $q(G)\geq \Delta + 1$. Equality is guaranteed if and only if $G$ is a star.
\end{lemma}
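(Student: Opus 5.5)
The statement is the classical bound $q(G)\ge \Delta+1$ for any graph with at least one edge, with equality exactly for stars. I would prove it by a Rayleigh-quotient comparison with the star sitting at a maximum-degree vertex.

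\textbf{Lower bound.} Let $v$ be a vertex with $d(v)=\Delta\ge 1$, and let $S$ be the star spanned by $v$ and its neighbours $N(v)$. Since $Q(G)$ is symmetric and nonnegative, $q(G)=\max_{\|x\|=1}x^TQ(G)x$. We have
\begin{align*}
x^TQ(G)x=\sum_{uw\in E(G)}(x_u+x_w)^2 .
\end{align*}
For a vector supported on $N[v]$, this sum is at least the sum over the edges of $S$ only. Hence $q(G)\ge q(K_{1,\Delta})$.

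\textbf{Spectrum of the star.} Use the equitable partition $\{v\}\cup N(v)$ of $Q(K_{1,\Delta})$. Its quotient matrix is
\begin{align*}
\begin{pmatrix}\Delta & \Delta\\ 1 & 1\end{pmatrix},
\end{align*}
with eigenvalues $0$ and $\Delta+1$. By Lemma~\ref{lem4}, $q(K_{1,\Delta})=\Delta+1$, which gives $q(G)\ge\Delta+1$.

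\textbf{Equality case.} Suppose $q(G)=\Delta+1$, and first assume $G$ is connected. Let $y$ be the positive Perron vector of $K_{1,\Delta}$, padded with zeros to all of $V(G)$.
\begin{itemize}
\item The chain of inequalities forces $y^TQ(G)y=q(G)$, so $y$ is an eigenvector of $Q(G)$ for $q(G)$.
\item By Perron--Frobenius, the eigenvector for $q(G)$ of a connected graph is strictly positive on every vertex. Since $y$ vanishes off $N[v]$, this gives $V(G)=N[v]$.
\item $y$ is positive on all of $N[v]$, so any additional edge $uw$ inside $N(v)$ would contribute the positive term $(y_u+y_w)^2$ and push the quadratic form above $\Delta+1$. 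Hence $G=K_{1,\Delta}$ is a star.
\end{itemize}

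\textbf{Disconnected graphs.} Apply the connected case to the component containing a maximum-degree vertex; every other component contributes an eigenvalue at most $q$. I expect this to be the main obstacle, because equality could also arise when a star component is accompanied by isolated vertices. So "star" must be read as "star together with isolated vertices" unless connectivity is assumed, as the paper does throughout.

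Conversely, stars attain equality by the quotient computation above.
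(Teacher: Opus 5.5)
The paper does not prove this lemma; it quotes it from Cvetkovi\'c--Rowlinson--Simi\'c and Nikiforov. Section~1 also states that every graph considered in the paper is connected. For connected $G$ your argument is correct and complete, and it is essentially the standard proof:
\begin{itemize}
\item The edge-sum form $x^TQ(G)x=\sum_{uw\in E(G)}(x_u+x_w)^2$ gives monotonicity of the $Q$-index under passing to subgraphs.
\item The quotient matrix of the star gives $q(K_{1,\Delta})=\Delta+1$.
\item In the equality case, the zero-padded Perron vector $y$ of the star attains the Rayleigh maximum, so it is a $q(G)$-eigenvector. Simplicity and strict positivity of the Perron vector then force $V(G)=N[v]$, and the quadratic form rules out any further edges inside $N(v)$.
\end{itemize}

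Your closing remark on disconnected graphs is not right, however. Equality is not confined to ``a star together with isolated vertices'': for instance, $K_{1,3}\cup K_2$ and $K_{1,3}\cup C_5$ both have $\Delta=3$ and $q=4=\Delta+1$.

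Your argument in fact proves the correct disconnected statement if you replace the Perron--Frobenius step by the eigen-equation. If some $z\notin N[v]$ were adjacent to $w\in N(v)$, then $(Q(G)y)_z\ge y_w>0=(\Delta+1)y_z$, a contradiction. This yields that every component containing a vertex of degree $\Delta$ is $K_{1,\Delta}$, while the remaining components need only satisfy $q\le\Delta+1$.

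So the hypothesis ``contains at least one edge'' should simply be read together with the paper's standing connectivity assumption. That is exactly how the lemma is used in the paper, for $\theta(2,3^k)$, $T_k$ and the graphs $H^{(r)}_k$.
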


\begin{lemma}\textup{\cite{on_three_conjectures_by_feng_yu}}\label{lem2}
    Consider a connected graph $G$. Then $q(G) \leq \max\big\{ d(v) +$ \linebreak $\frac{1}{d(v)}\sum_{w \in N(v)}d(w) : v \in V(G)\big\}$. Equality is satisfied only when $G$ is a semi-regular bipartite graph or a regular graph.
\end{lemma}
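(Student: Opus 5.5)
The statement is Lemma~\ref{lem2}: for a connected graph $G$, $q(G)\le \max_v \big(d(v)+m(v)\big)$, where $m(v)=\frac{1}{d(v)}\sum_{w\in N(v)}d(w)$ is the average 2-degree. The plan is a similarity transformation followed by a row-sum (Perron--Frobenius) bound. Let $D=\diag(d(v_1),\ldots,d(v_n))$, which is invertible since $G$ is connected and has no isolated vertices (the case $n=1$ is trivial). The matrix $D^{-1}QD$ has the same spectrum as $Q$. Its $(u,u)$ entry is $d(u)$, and its $(u,w)$ entry is $d(w)/d(u)$ whenever $uw\in E(G)$. Hence the row sum at $u$ is exactly
\[
d(u)+\frac{1}{d(u)}\sum_{w\in N(u)}d(w).
\]
This matrix is nonnegative and irreducible, so its spectral radius $q(G)$ is at most the maximum row sum. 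That gives the inequality.

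For the equality case, I will use the refined Perron--Frobenius fact: for a nonnegative irreducible matrix, the spectral radius equals the maximum row sum if and only if all row sums are equal. So equality forces $d(u)+m(u)$ to equal a constant $c$ for every vertex $u$.

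The main obstacle is deducing from this that $G$ is regular or semi-regular bipartite. Here I would pass back to $Q$. The vector $D\mathbf{1}$, i.e.\ the degree vector, is then a positive eigenvector of $Q$ for the eigenvalue $c=q(G)$, because $(D^{-1}QD)\mathbf{1}=c\mathbf{1}$. Writing out the eigen-equation gives $d(u)^2+\sum_{w\sim u}d(w)=c\,d(u)$ for every $u$.

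I would then compare this with the Rayleigh-quotient inequality
\[
q(G)\ge \frac{\sum_{uv\in E}\big(d(u)+d(v)\big)^2}{\sum_u d(u)^2},
\]
and also with the bound $q(G)\le \max_{uv\in E}\big(d(u)+d(v)\big)$. From these I would aim to show that $d(u)+d(v)$ is constant along every edge. If that holds, then either all degrees are equal, or the degrees alternate between two values $a\ne b$ along every edge. Connectivity then gives a proper 2-colouring, so $G$ is bipartite and semi-regular.

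To make this step clean, the alternative I would try first is the following. Since the degree vector is the Perron vector of $Q=R^TR$, where $R$ is the vertex--edge incidence matrix, the vector $R^T(D\mathbf{1})$ must be the Perron vector of $RR^T$. The entry of $R^T(D\mathbf{1})$ at the edge $uv$ is $d(u)+d(v)$. I would then use the edge version of the eigen-equation to derive constancy of $d(u)+d(v)$ on edges.

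Conversely, for regular and semi-regular bipartite graphs the row sums of $D^{-1}QD$ are all equal to $2r$ and $a+b$ respectively, so equality holds in those cases.
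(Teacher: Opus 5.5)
Your inequality argument (similarity by $D=\diag(d(v))$ and the row-sum bound for the irreducible matrix $D^{-1}QD$) is the standard Feng--Yu proof; the paper only cites the lemma. Your reduction of the equality case to ``$d(v)+\frac{1}{d(v)}\sum_{w\in N(v)}d(w)=c$ for every $v$'' via the refined Perron--Frobenius fact is also correct.

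The gap is exactly the step you flag as the main obstacle. You never derive that $s_{uv}:=d(u)+d(v)$ is constant on edges, and the tools you name cannot do it. Because $D\mathbf{1}$ is an eigenvector, your Rayleigh quotient is an identity, $c=\sum_{e\in E}s_e^2/\sum_{e\in E}s_e$. This is compatible with $c\le\max_e s_e$ for every positive vector $(s_e)$, so nothing forces constancy. Also, the equality case of $q\le\max_{uv\in E}(d(u)+d(v))$ is a statement of the same kind as the one you are proving, so relying on it is close to circular. The incidence-matrix route gives nothing new either. Summing the edge equation $\sum_{f\sim e}s_f=(c-2)s_e$ over all edges, and using that $e$ has $s_e-2$ neighbours in the line graph, returns the same relation $c\sum_e s_e=\sum_e s_e^2$.

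What is missing is a second, independent expression for $c$. Summing the vertex equations $d(u)^2+\sum_{w\sim u}d(w)=c\,d(u)$ over all $u$ gives $2\sum_u d(u)^2=2|E(G)|\,c$. Since $\sum_{e\in E}s_e=\sum_u d(u)^2$, this says $c$ is the plain mean of the $s_e$. Equating this with the Rayleigh identity gives $|E(G)|\sum_e s_e^2=\big(\sum_e s_e\big)^2$, which is equality in Cauchy--Schwarz. So $s_e$ is constant, and your two-colouring argument finishes the proof.

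Alternatively, you can bypass eigenvectors completely. If $d(u)=\Delta$ and $d(v)=\delta$, then $c=\Delta+\frac{1}{d(u)}\sum_{w\in N(u)}d(w)\ge\Delta+\delta$ and $c=\delta+\frac{1}{d(v)}\sum_{w\in N(v)}d(w)\le\delta+\Delta$, so $c=\Delta+\delta$. Hence every neighbour of a maximum-degree vertex has degree $\delta$, and every neighbour of a minimum-degree vertex has degree $\Delta$. Connectivity then yields either regularity or a $(\Delta,\delta)$ semi-regular bipartition.
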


\begin{lemma}\textup{\cite[Theorem $2.1$]{sharp_upper_lower_by_hong_zhang}}\label{lem3}
    Let $G$ be a connected graph and let $u, v$ be two vertices in it. Suppose $v_1, v_2,\ldots, v_s \in N(v) \setminus (N(u)\cup \{u\})$, where $1 \leq s  \leq d(v)$. We construct $\widetilde{G}$ from $G$ by deleting the edges $vv_i$ and adding $uv_i$ for $i=1,2,\ldots, s$. Let the Perron vector of $Q(G)$ be denoted by $X = (x_1, x_2,\ldots, x_n)^t$, where $x_i$ corresponds to $v_i$ $(1 \leq i \leq n)$. If $x_u \geq  x_v$, then $q(\widetilde{G}) > q(G)$.
\end{lemma}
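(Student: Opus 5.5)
The plan is to compare the two matrices through the Rayleigh quotient evaluated at the Perron vector $X$ of $Q(G)$, and then rule out equality using the eigenvalue equations. Recall that for any graph $H$ on the same vertex set and any real vector $y$,
\[
y^{T}Q(H)y=\sum_{ij\in E(H)}(y_i+y_j)^2 .
\]
Since $\widetilde{G}$ has the same vertex set as $G$, the same vector $X$ can be used for both graphs.

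\textbf{Step 1 (weak inequality).} Passing from $G$ to $\widetilde{G}$ changes only the $s$ edges $vv_i$, which become $uv_i$. Because $v_i\notin N(u)\cup\{u\}$, each new edge $uv_i$ is not already present, so $\widetilde{G}$ is again simple. Hence
\[
X^{T}Q(\widetilde{G})X-X^{T}Q(G)X=\sum_{i=1}^{s}\Big[(x_u+x_{v_i})^2-(x_v+x_{v_i})^2\Big]
=\sum_{i=1}^{s}(x_u-x_v)(x_u+x_v+2x_{v_i}).
\]
This is $\ge 0$, since $x_u\ge x_v$ and every entry of $X$ is positive. Because $X$ is a unit vector, the Rayleigh principle for the real symmetric matrix $Q(\widetilde{G})$ gives
\[
q(\widetilde{G})\ \ge\ X^{T}Q(\widetilde{G})X\ \ge\ X^{T}Q(G)X=q(G).
\]

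\textbf{Step 2 (strictness).} This is the main point, and I would argue by contradiction. Suppose $q(\widetilde{G})=q(G)=:q$. Then both inequalities above are equalities. In particular $X$ attains the maximum of the Rayleigh quotient of $Q(\widetilde{G})$, so $X$ is an eigenvector of $Q(\widetilde{G})$ for $q$. This conclusion needs no connectivity of $\widetilde{G}$, only symmetry of $Q(\widetilde{G})$.

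Now compare the eigen-equations at the vertex $u$. In $G$ we have
\[
q\,x_u=d_G(u)\,x_u+\sum_{w\in N_G(u)}x_w .
\]
In $\widetilde{G}$ we have $d_{\widetilde{G}}(u)=d_G(u)+s$ and $N_{\widetilde{G}}(u)=N_G(u)\cup\{v_1,\ldots,v_s\}$. Note that if $u\sim v$ the edge $uv$ is untouched, since each $v_i\ne u$. So
\[
q\,x_u=(d_G(u)+s)\,x_u+\sum_{w\in N_G(u)}x_w+\sum_{i=1}^{s}x_{v_i}.
\]
Subtracting the two equations yields $s\,x_u+\sum_{i=1}^{s}x_{v_i}=0$. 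This is impossible because $s\ge1$ and all entries of $X$ are positive. Therefore $q(\widetilde{G})>q(G)$.

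The only delicate points are the following. First, one must justify that equality in the Rayleigh bound forces $X$ to be an eigenvector of $Q(\widetilde{G})$; this is the standard fact for symmetric matrices. Second, one must check the neighbourhood bookkeeping at $u$, namely that $u$ gains exactly the $s$ new neighbours and loses none. Choosing to test the equation at $u$ rather than at $v$ avoids any case split on whether $u$ and $v$ are adjacent.
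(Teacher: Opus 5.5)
Your proof is correct. It is the standard argument for this result: the paper gives no proof and simply cites Hong and Zhang. The Rayleigh quotient at the Perron vector gives $q(\widetilde{G})-q(G)\ge\sum_{i=1}^{s}(x_u-x_v)(x_u+x_v+2x_{v_i})\ge 0$, and equality would force $X$ to be an eigenvector of $Q(\widetilde{G})$, which the eigen-equation at $u$ rules out. The one implicit step worth stating is that $q(\widetilde{G})=\lambda_{\max}(Q(\widetilde{G}))$ even when $\widetilde{G}$ is disconnected, because $Q(\widetilde{G})$ is positive semidefinite.
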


\begin{lemma}\label{lem7}
    Let $G$ be any graph of size $m$ with minimum degree $\delta\geq 2$. Then, for any vertex $u\in V(G)$, we have $2m \geq 3d(u) + 2|\overline{N[u]}|$.
\end{lemma}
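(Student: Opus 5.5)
The plan is a direct count of edge endpoints. Fix $u\in V(G)$ and split the vertex set into three disjoint pieces: $\{u\}$, $N(u)$ and $\overline{N[u]}=V(G)\setminus N[u]$. Since $\delta\ge 2$, every vertex has degree at least $2$. We write $2m=\sum_{v\in V(G)}d(v)$ and bound each piece from below, then combine. The target is
\[
d(u)+\sum_{v\in N(u)}d(v)+\sum_{v\in\overline{N[u]}}d(v)\ \ge\ 3d(u)+2|\overline{N[u]}|.
\]

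The pieces are handled as follows.
\begin{itemize}
\item The vertex $u$ contributes exactly $d(u)$.
\item Each vertex of $\overline{N[u]}$ contributes at least $2$ by the minimum degree condition, giving $2|\overline{N[u]}|$.
\item Each $v\in N(u)$ contributes at least $2$, giving $2d(u)$.
\end{itemize}
Adding these yields $2m\ge d(u)+2d(u)+2|\overline{N[u]}|=3d(u)+2|\overline{N[u]}|$, which is exactly the claimed inequality.

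There is no real obstacle here. The only point to be careful about is that the three sets genuinely partition $V(G)$, so that no degree is counted twice in the handshake sum. Connectivity is not even needed; only $\delta\ge 2$ is used. An alternative edge-counting phrasing gives the same bound: the $d(u)$ edges at $u$, plus at least $d(u)$ further edge-endpoints coming from the neighbours' second required incidences, plus at least $2|\overline{N[u]}|$ endpoints in the rest. The degree-sum version is cleaner, so I would present that.
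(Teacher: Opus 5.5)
Your proof is correct and is essentially the paper's argument: apply the handshaking lemma, let $u$ contribute exactly $d(u)$, and bound each vertex of $N(u)\cup\overline{N[u]}$ below by $2$ using $\delta\ge 2$. This gives $2m\ge 3d(u)+2|\overline{N[u]}|$ directly.
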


\begin{proof}
    By the handshaking lemma, $2m = d(u) + \sum_{v \in V(G) \setminus \{u\}}d(v)$. Since $\delta\geq 2$, we get $2m \geq d(u) + 2(|N(u)| + |\overline{N[u]}|)$, and the desired inequality follows. 
\end{proof}


\section{Fixed-order extremal characterization for $\{C_3,C_4\}$-free graphs} \label{sec3}

Let $\mathscr{G}$ and $\mathcal{G}$ denote, respectively, the class of all $\{C_3,C_4\}$-free graphs of order $n$ with no pendant vertices and the class of all $\{\theta(1,2,2),\theta(1,2,3)\}$-free graphs of size $m$ with no pendant vertices. We first establish a lemma that will be used in the proofs of the subsequent results.

\begin{lemma}\label{lem11}
    Let $G\in \mathscr{G}$. For $v\in V(G)$, we have
    \begin{enumerate}[topsep=4pt,itemsep=0pt,label=\upshape(\roman*),ref=\upshape(\roman*)]
        \item\label{p1} $d(v) \le e(N(v),\overline{N[v]}) \le n-1-d(v)$,
        \item\label{p2} $\sum_{w\in N(v)}d(w) \le n-1$, and
        \item\label{p3} if $M(u)=\max\big\{M(v)=d(v) + \frac{1}{d(v)} \sum_{w \in N(v)} d(w) : v\in V(G)\big\}$, then $q(G) \le d(u) +\frac{n-1}{d(u)}$.
    \end{enumerate}
\end{lemma}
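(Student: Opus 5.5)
The plan is to exploit the two local consequences of $\{C_3,C_4\}$-freeness together with $\delta\ge 2$ (no pendant vertices; $G$ connected with $n\ge 5$, so no isolated vertices either). \emph{First}, $N(v)$ is an independent set, since an edge inside $N(v)$ gives a $C_3$. \emph{Second}, two distinct neighbours of $v$ have no common neighbour other than $v$, since a common neighbour $z\ne v$ of $w,w'\in N(v)$ gives the $4$-cycle $vwzw'$. Also, no vertex of $\overline{N[v]}$ is adjacent to two vertices of $N(v)$.

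For part \ref{p1}, fix $v$. Each $w\in N(v)$ has $d(w)\ge 2$. By the first observation, the neighbours of $w$ other than $v$ cannot lie in $N(v)$, so they lie in $\overline{N[v]}$. Hence $e(N(v),\overline{N[v]})=\sum_{w\in N(v)}(d(w)-1)\ge d(v)$. For the upper bound, the second observation says each vertex of $\overline{N[v]}$ is adjacent to at most one vertex of $N(v)$. Therefore $e(N(v),\overline{N[v]})\le|\overline{N[v]}|=n-1-d(v)$.

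Part \ref{p2} then follows by rewriting $\sum_{w\in N(v)}d(w)=d(v)+e(N(v),\overline{N[v]})$, using the identity from \ref{p1}. Applying the upper bound of \ref{p1} gives $\sum_{w\in N(v)}d(w)\le d(v)+n-1-d(v)=n-1$.

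For part \ref{p3}, apply Lemma \ref{lem2} to the connected graph $G$: $q(G)\le \max_v M(v)=M(u)$. Then use \ref{p2} at the vertex $u$, which gives $M(u)=d(u)+\frac{1}{d(u)}\sum_{w\in N(u)}d(w)\le d(u)+\frac{n-1}{d(u)}$. There is no real obstacle here. The only point needing care is the counting in \ref{p1}, namely justifying that all edges leaving $N(v)$ other than those to $v$ go to $\overline{N[v]}$ and that these edges hit distinct vertices there; both facts are exactly the triangle-free and $C_4$-free conditions.
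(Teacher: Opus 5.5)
Your proof is correct and follows the paper's argument essentially step for step. You use the independence of $N(v)$ together with $\delta\ge 2$ for the lower bound in (i), and $C_4$-freeness for the upper bound. For (ii) you use the identity $\sum_{w\in N(v)}d(w)=d(v)+e(N(v),\overline{N[v]})$, and for (iii) you combine Lemma~\ref{lem2} with (ii) applied at $u$.
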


\begin{proof}
    \begin{enumerate}[topsep=4pt,itemsep=0pt,label=\upshape(\roman*),ref=\upshape(\roman*)]
        \item Since $G$ is $C_3$-free, $N(v)$ is independent. Also, since $G$ has no pendant vertices, every vertex of $N(v)$ must have a neighbor in $\overline{N[v]}$. Hence $e(N(v), \overline{N[v]})\geq |N(v)| = d(v)$. On the other hand, since $G$ is $C_4$-free, each vertex of $\overline{N[v]}$ has at most one
        neighbor in $N(v)$; otherwise, together with $v$, a copy of $C_4$ would arise. Therefore $e(N(v),\overline{N[v]})\leq |\overline{N[v]}| = n-1-d(v)$. By combining the two inequalities, we get the result.
        \item Since $N(v)$ is independent, $e(N(v))=0$. Using this and Lemma \ref{lem11}\ref{p1}, we get $\sum_{w \in N(v)} d(w)= d(v) + e(N(v),\overline{N[v]}) + 2e(N(v))\le d(v) + \big(n-1-d(v)\big) = n-1$.
        \item We obtain the required inequality by Lemma \ref{lem11}\ref{p2} and Lemma \ref{lem2}.\qedhere
    \end{enumerate}
\end{proof}

In the rest of the paper, we use the following notations given in Notation \ref{n1}.

\begin{notation}\label{n1}
    For any vertex $v$ in a graph $G$, $M(v)$ is given by $M(v)=d(v) + \frac{1}{d(v)} \sum_{w \in N(v)} d(w)$. We denote by $G^*$ a graph attaining the maximum signless Laplacian spectral radius in the corresponding graph class, namely $\mathscr{G}$ (or $\mathcal{G}$), and by $u$ a vertex of $G^*$ such that $M(u)=\max\{M(v):v\in V(G^*)\}$. We use $N$ for $N(u)$ and $\overline{N}$ for complement of $N[u]$ in $V(G^*)$. The Perron vector of $Q(G^*)$ is denoted by $X=X(G^*)$, with $x_v$ denoting the component of $X$ corresponding to $v\in V(G^*)$.
\end{notation}

\begin{theorem}\label{r5}
Let $G$ be a $\{C_3, C_4\}$-free graph of order $n =2k+3$, $k \geq 10$, with no pendant vertex. Then $q(G) \leq q'(n)$, where $q'(n)$ is the largest root of $x^3 -\frac{1}{2}(n+9)x^2 + (2n+3)x -2n+6=0$. Moreover, equality holds if and only if $G \cong\theta (2, 3^{\frac{n-3}{2}})$.
\end{theorem}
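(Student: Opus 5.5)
The plan is to sandwich $q(G^*)$ between an explicit lower bound coming from the candidate graph and the upper bound of Lemma~\ref{lem11}\ref{p3}, and then pin down the structure of $G^*$. First I compute $q(\theta(2,3^{k}))$, where $k=\frac{n-3}{2}$. Let $x,y$ be the terminals, $z$ the middle vertex of the path of length $2$, and $I$ the set of $2k$ internal vertices of the paths of length $3$. The partition $\{x,y\}\cup\{z\}\cup I$ is equitable for $Q$, with quotient matrix
\[
B=\begin{pmatrix} k+1 & 1 & k\\ 2 & 2 & 0\\ 1 & 0 & 3\end{pmatrix}.
\]
Expanding $\det(xI_3-B)$ and substituting $k=\frac{n-3}{2}$ should give exactly $x^3-\frac12(n+9)x^2+(2n+3)x-2n+6$. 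By Lemma~\ref{lem4}, $q(\theta(2,3^k))=q'(n)$. A direct estimate of the cubic (evaluate it at $k+2$ and at $k+2+\frac{c}{k}$) should give $q'(n)=\frac{n+1}{2}+\Theta(1/n)$, strictly between $\frac{n+1}{2}$ and $\frac{n+3}{2}$. So $q(G^*)\ge q'(n)>\frac{n+1}{2}$.

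Next I bound $d(u)$. By Lemma~\ref{lem11}\ref{p1} we have $d(u)\le \frac{n-1}{2}=k+1$, and by Lemma~\ref{lem11}\ref{p3} we have $q(G^*)\le d(u)+\frac{n-1}{d(u)}=:f(d(u))$. Since $f$ is convex, $f(d)>\frac{n+1}{2}$ forces $d(u)\in\{2,k+1\}$, apart from possibly $d(u)=3$ or $d(u)=k$ for small boundary effects, which I would rule out numerically using $k\ge 10$ and the sharper location of $q'(n)$. The main case is $d(u)=k+1$. Then both inequalities in Lemma~\ref{lem11}\ref{p1} are equalities: $|\overline{N}|=k+1$, every vertex of $N$ has exactly one neighbour in $\overline{N}$, and these edges form a perfect matching between $N$ and $\overline{N}$. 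Hence $G^*$ is $u$ joined to $k+1$ pendant paths of length $2$, together with some edges inside $\overline{N}$. Every vertex of $\overline{N}$ has degree at least $2$, so $G^*[\overline{N}]$ has minimum degree at least $1$. $C_3$- and $C_4$-freeness forbid an edge of $G^*[\overline{N}]$ from joining two $\overline{N}$-ends of the same matching edge, and impose similar short-cycle restrictions. I then compare the possible configurations: $G^*[\overline{N}]$ could be a perfect matching (possible only when $k+1$ is even), a matching plus one path $P_3$ (this is $\theta(2,3^k)$ with $u=x$, the middle vertex of the $P_3$ being $y$, and the rest arranged accordingly), or something with more edges. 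To compare them I would use equitable partitions together with Lemma~\ref{lem3}: shifting edges inside $\overline{N}$ toward a vertex of larger Perron entry strictly increases $q$. This should show that a star-like $G^*[\overline{N}]$ centred at one vertex $y$ is optimal. $C_4$-freeness plus the no-pendant condition then pins it down as $\theta(2,3^k)$, since $y$ adjacent to all other vertices of $\overline{N}$ except its own matching partner's side yields exactly that graph.

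The case $d(u)=2$, and more generally small $d(u)$, is the step I expect to be the main obstacle. Here I would use that $q(G^*)>\frac{n+1}{2}$ together with $q\le M(u)\le 2+\frac{1}{2}\sum_{w\in N(u)}d(w)$. This forces $\sum_{w\in N(u)}d(w)\ge n-2$, so $u$ has a neighbour $w$ of degree roughly $\frac{n}{2}$. Since $M(w)\le M(u)$, applying Lemma~\ref{lem11}\ref{p2} at $w$ makes $M(w)\approx \frac{n}{2}+1+O(1)$. Comparing carefully, and using that $G^*\not\cong C_n$ lets one degree exceed $2$, should show that $M(w)$ exceeds $M(u)$ unless $d(w)$ is itself maximal. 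In that case I would re-choose $u=w$ and reduce to the previous case. If this comparison fails, I would instead bound $q$ via a Rayleigh-quotient or Sylvester-type argument (Lemmas~\ref{lem8}--\ref{lem10}) on a quotient matrix of the degree-$2$ structure, showing that $q(G^*)<q'(n)$. Finally, equality analysis: every strict inequality used forces $G^*\cong\theta(2,3^{k})$.
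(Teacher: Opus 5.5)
Your proposal has a genuine gap in the main case $d(u)=k+1$. The reduction itself is right. $G^*$ consists of $u$ joined to $v_1,\dots,v_{k+1}$, a perfect matching $v_iw_i$, and $H=G^*[\overline{N}]$, which is a $\{C_3,C_4\}$-free graph on $k+1$ vertices with $\delta(H)\ge 1$. (Incidentally, $\theta(2,3^k)$ corresponds to $H\cong K_{1,k}$ centred at the other terminal, not to ``a matching plus one path $P_3$''.)

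The problem is the next step. ``Shifting edges inside $\overline{N}$ via Lemma~\ref{lem3}'' does not show that the star is optimal. Each shift must keep the graph $\{C_3,C_4\}$-free, connected and without pendant vertices, and Lemma~\ref{lem3} always removes edges from the donor vertex. This can fail outright. Take $k$ odd and $H$ a perfect matching, so $G^*$ is a bouquet of $\frac{k+1}{2}$ pentagons at $u$:
\begin{itemize}
\item every vertex other than $u$ has degree $2$, so if it donates its degree drops to at most $1$;
\item $u$ has the strictly largest Perron entry, so no recipient is admissible when $u$ donates.
\end{itemize}
Hence no class-preserving shift exists, yet this graph must still be excluded. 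Its $q$ is about $k+2+\frac1k$, while $q(\theta(2,3^k))$ is about $k+2+\frac3k$, so they differ only at order $1/k$. A quantitative spectral bound is unavoidable here.

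The missing idea, as in the paper, runs as follows.
\begin{itemize}
\item If $H\not\cong K_{1,k}$, then $\Delta(H)\le k-1$, since a vertex of $H$-degree $k$ plus triangle-freeness forces the star.
\item Lemma~\ref{lem11}\ref{p2} applied inside $H$, together with Lemma~\ref{lem2}, then gives $q(H)\le k+\frac{1}{k-1}$.
\item Write $Q(G^*)$ in block form over $\{u\}\cup N\cup\overline{N}$ and apply Cauchy--Schwarz. This yields $q(G^*)\le\lambda_{\max}(L)$ for an explicit $3\times 3$ matrix $L$, with diagonal $k+1,\,2,\,k+1+\frac{1}{k-1}$ and off-diagonal entries $\sqrt{k+1}$ and $1$.
\item Sylvester's criterion gives $\lambda_{\max}(L)<k+2+\frac{3}{k+1}$.
\item Finally, $j\bigl(k+2+\frac{3}{k+1}\bigr)<0$ shows $k+2+\frac{3}{k+1}<q(\theta(2,3^k))$, giving the contradiction.
\end{itemize}
Without an estimate of this precision, your argument does not exclude any $H\not\cong K_{1,k}$.

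By contrast, the case $d(u)=2$ that you flagged as the main obstacle is easy, and your re-choosing idea works cleanly. From $M(u)>k+2$ you get $d(a)+d(b)\ge 2k+1$. Lemma~\ref{lem11}\ref{p1} caps both degrees at $k+1$, so say $d(a)=k+1$. Then $M(a)\ge d(a)+2=k+3\ge M(u)$, so $a$ is also an $M$-maximizer, of degree $k+1$, and you may replace $u$ by $a$. This is shorter than the paper's subcase analysis on $t$, but it only feeds back into the main case, so the gap above remains decisive. Your treatment of $3\le d(u)\le k$ matches the paper, including the need for $q'(n)>k+2+\frac{2}{k}$ when $d(u)=k$.
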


\begin{proof}
Let $G^*$ be a graph with the maximum signless Laplacian spectral radius in the class $\mathscr{G}$ with $n=2k+3$, $k\geq 10$. The graph $\theta(2,3^{\frac{n-3}{2}})=\theta(2,3^k)$ belongs to $\mathscr{G}$ and has maximum degree $k+1$. With respect to the partition consisting of two terminal vertices, the internal vertex on the path of length $2$, and the internal vertices on the paths of length $3$, the quotient matrix associated with $Q(\theta(2,3^k))$ is $E_{\theta(2, 3^k)} =
    \left(\begin{smallmatrix}
        k+1 & 1 & k \\
        2 & 2 & 0 \\
        1 & 0 & 3
    \end{smallmatrix}\right)$. The characteristic polynomial of $E_{\theta(2, 3^k)}$ is $j(x)=x^3-(k+6)x^2 + (4k+9)x -4k$. If $\lambda_j$ is the largest root of $j(x)=0$, then $q(\theta(2, 3^k)) = \lambda_j$ from Lemma \ref{lem4}. Moreover, since $\theta(2,3^k)$ is not a star, Lemma~\ref{lem1} gives $q(\theta(2, 3^k)) > k+2$. Now, if possible, let $G^*\not\cong \theta(2,3^k)$. Then $q(G^*) \geq q(\theta(2, 3^k)) > k+2$. Applying Lemma \ref{lem11}\ref{p1} on $G^*$ and $u$, we get $d(u) \le 2k+2-d(u)$, which implies $d(u)\leq k+1$. Since $G^*$ does not have any pendant vertices, it follows that $2\leq d(u) \leq k+1$. We now divide the proof into cases according to the possible values of $d(u)$.
\medskip\\ 
\noindent \textbf{Case 1.} $d(u)=k+1$. Since $G^*$ is $C_3$-free, $N$ is independent in $G^*$, and $|N|=|\overline{N}|=k+1$. By Lemma \ref{lem11}\ref{p1}, we get $k+1 \le e(N,\overline{N})\le (2k+3)-1-(k+1)$, implying $e(N,\overline{N})=k+1$. Therefore, each vertex of $N$ has exactly one neighbor in $\overline{N}$, and each vertex of $\overline{N}$ has exactly one neighbor in $N$. Thus we may write $N=\{v_1,\ldots,v_{k+1}\}$, $\overline{N}=\{w_1,\ldots,w_{k+1}\}$, where $v_iw_i\in E(G^*)$ for each $i=1,2,\ldots, k+1$, and these are precisely the edges between $N$ and $\overline{N}$ in $G^*$. Let $H=G^*[\overline{N}]$. Since each $w_i$ has exactly one neighbor in $N$ and $\delta(G^*)\geq 2$, each $w_i$ must have at least one neighbor in $\overline{N}$. Hence $\delta(H)\geq 1$. Moreover, since $G^*$ is $\{C_3,C_4\}$-free, the induced subgraph $H$ is also $\{C_3,C_4\}$-free. Thus $G^*$ has the form $S(H)$, where $S(H)$ is obtained from $H$ by attaching one new vertex $v_i$ to each vertex $w_i$ and joining all the vertices $v_i$ to the vertex $u$. 

If $H\cong K_{1,k}$, then $G^*\cong \theta(2,3^k)$, contradicting the assumption $G^*\not\cong \theta(2,3^k)$. Hence we may assume that $H \not\cong K_{1,k}$. We show that this remaining possibility also leads to a contradiction. Since $H$ is $\{C_3,C_4\}$-free and $|V(H)|=k+1$, $\Delta(H)\leq k-1$. We already have $\delta(H)\geq 1$, thus for any vertex $w\in V(H)$, $1 \le d_H(w) \le k-1$. If $d_H(w)=1$, then the unique neighbor of $w$ in $H$ has degree at most $k-1$, and thus, $d_H(w)+\frac{1}{d_H(w)} \sum_{w'\in N_H(w)}d_H(w') \le 1+k-1=k$. For $2\le d_H(w) \le k-1$, applying Lemma \ref{lem11}\ref{p2} on $H$ and $w$, we get $\sum_{w'\in N_H(w)}d_H(w')\le k$, and hence $d_H(w)+\frac{1}{d_H(w)}\sum_{w'\in N_H(w)}d_H(w') \le d_H(w)+\frac{k}{d_H(w)}$. The function $f(x)=x+\tfrac{k}{x}$ is convex for $x>0$, so its maximum on the interval $[2,k-1]$ is attained at an endpoint. Thus $d_H(w)+\frac{k}{d_H(w)} \le \max\left\{2+\frac{k}{2}, k-1+\frac{k}{k-1}\right\}$. Since $\left(k-1+\tfrac{k}{k-1}\right) - \left(2+\tfrac{k}{2}\right) = \tfrac{(k-2)(k-3)}{2(k-1)} > 0$ for $k\ge 10$, $d_H(w)+\frac{k}{d_H(w)} \le k-1+\frac{k}{k-1} = k+\frac1{k-1}$. Since $\frac{1}{k-1}>0$ for $k\ge 10$, for every vertex $w\in V(H)$, $d_H(w)+\frac{1}{d_H(w)} \sum_{w'\in N_H(w)}d_H(w') \le k+\frac1{k-1}$. Applying Lemma~\ref{lem2} to each connected component of $H$, and using this inequality, we obtain $q(H)\le k+\frac1{k-1}$. Now, with respect to the partition $V(G^*)=\{u\}\cup N\cup \overline{N}$, $Q(G^*)$ has the following block representation $Q(G^*)=
\left(\begin{smallmatrix}
k+1 & \mathbf{1}^{T} & \mathbf{0}^T\\
\mathbf{1} & 2I_{k+1} & I_{k+1}\\
\mathbf{0} & I_{k+1} & Q(H)+I_{k+1}
\end{smallmatrix}\right)$. Let $z=
\left(\begin{smallmatrix}
z_1\\
z_2\\
z_3
\end{smallmatrix}\right)$,
where $z_1\in \mathbb{R}$, $z_2\in \mathbb{R}^{k+1}$ and $z_3\in \mathbb{R}^{k+1}$.
Then $z^TQ(G^*)z
=
(k+1){z_1}^2
+
2z_1\mathbf{1}^Tz_2
+
2\lVert z_2 \rVert^2
+
2z_2^Tz_3
+
z_3^TQ(H)z_3 + \lVert z_3 \rVert^2$. By Rayleigh quotient, we have $z_3^TQ(H)z_3\le q(H)\lVert z_3 \rVert^2$. Furthermore, applying Cauchy-Schwarz inequality, $z^TQ(G^*)z
\le
(k+1)z_1^2
+
2\sqrt{k+1}|z_1|\lVert z_2 \rVert
+
2\lVert z_2\rVert^2
+
2\lVert z_2\rVert \lVert z_3\rVert
+
\big(q(H)+1\big)\lVert z_3\rVert^2$. Using the inequality $q(H)\le k+\frac1{k-1}$ we just obtained, this becomes $z^TQ(G^*)z
\le
(k+1)z_1^2
+
2\sqrt{k+1}|z_1|\lVert z_2 \rVert
+
2\lVert z_2\rVert^2
+
2\lVert z_2\rVert \lVert z_3\rVert
+
\left(k+1+\frac{1}{k-1}\right)\lVert z_3\rVert^2$. We define $\widehat z=
\left(\begin{smallmatrix}
|z_1|\\
\lVert z_2\rVert\\
\lVert z_3\rVert
\end{smallmatrix}\right)$
and $L=
\left(\begin{smallmatrix}
k+1 & \sqrt{k+1} & 0\\
\sqrt{k+1} & 2 & 1\\
0 & 1 & k+1+\frac{1}{k-1}
\end{smallmatrix}\right)$.
Then the right-hand side of the last inequality is precisely $\widehat z^{T}L\widehat z$, thus, $z^TQ(G^*)z\le \widehat z^T L\widehat z$. Moreover, $\lVert\widehat z\rVert^2=|z_1|^2+\lVert z_2\rVert^2+\lVert z_3\rVert^2=\lVert z\rVert^2$. So $\lVert z\rVert =1$ if and only if $\lVert \widehat z\rVert =1$. When $\lVert \widehat z \rVert =1$, from Rayleigh quotient, we have $\widehat z^T L\widehat z\le \lambda_{\max}(L)$. Therefore, for every unit vector $z$, $z^TQ(G^*)z\le \lambda_{\max}(L)$. Taking the maximum over all unit vectors $z$, it follows that $q(G^*)=\max_{\lVert z\rVert=1}z^TQ(G^*)z\le \lambda_{\max}(L)$. Let $x_0=k+2+\frac{3}{k+1}$. Then $x_0I-L=
\left(\begin{smallmatrix}
1+\frac{3}{k+1} & -\sqrt{k+1} & 0\\
-\sqrt{k+1} & k+\frac{3}{k+1} & -1\\
0 & -1 & 1+\frac{3}{k+1}-\frac1{k-1}
\end{smallmatrix}\right)$. The first leading principal minor of this matrix is $D_1=1+\frac3{k+1}$, the second one is $D_2=
\left|\begin{smallmatrix}
1+\frac3{k+1} & -\sqrt{k+1}\\
-\sqrt{k+1} & k+\frac3{k+1}
\end{smallmatrix}\right|
=\frac{2k^2+4k+11}{(k+1)^2}$, and the third one is $D_3=\det(x_0I-L) = \frac{k^4+3k^3+6k^2+7k-51}{(k+1)^3(k-1)}$. When $k\ge 10$, $D_1, D_2, D_3>0$. Consequently, $x_0I-L$ is positive definite for $k\ge 10$ by Lemma \ref{lem9}. We further apply Lemma \ref{lem8} and obtain $\lambda_{\max}(L)<x_0=k+2+\frac{3}{k+1}$. Combining it with $q(G^*)\le \lambda_{\max}(L)$, we get $q(G^*)<k+2+\frac{3}{k+1}$. On the other hand, recalling $j(x)$, the characteristic polynomial of $E_{\theta(2, 3^k)}$, we obtain $j(k+2+\frac{3}{k+1}) = -\frac{k^3-9k^2-3k-20}{(k+1)^3}<0$ for $k\geq 10$. Since $q(\theta(2,3^k))$ is the largest root of $j(x)=0$, $q(\theta(2, 3^k))>k+2+\frac{3}{k+1}$. Together with $q(G^*)<k+2+\frac{3}{k+1}$, this gives $q(G^*)<q(\theta(2, 3^k))$, which contradicts the maximality of $q(G^*)$ in $\mathscr{G}$. Thus the possibility $H\not\cong K_{1,k}$ also leads to a contradiction. Consequently, Case 1 cannot occur under the initial assumption $G^*\not\cong \theta(2,3^k)$.
\medskip\\ 
\noindent \textbf{Case 2.} $d(u)=k$. By Lemma \ref{lem11}\ref{p3}, we get $q(G^*) \le k +\frac{2k+2}{k}=k+2+\frac{2}{k}$. On the other hand, $j(k+2+\frac{2}{k})=-\frac{k^4-2k^3-2k^2-8}{k^3}<0$, since $k^4-2k^3-2k^2-8 = k^2((k-1)^2-1)-8 > 0$ for $k\ge 10$. Thus $k+2+\frac{2}{k} < q(\theta(2,3^k))$. Hence, $q(G^*) < q(\theta(2,3^k))$, contradicting the maximality of $q(G^*)$ in $\mathscr{G}$. Therefore, Case 2 cannot occur.
\medskip\\ 
\noindent \textbf{Case 3.} $3\le d(u)\le k-1$. By Lemma \ref{lem11}\ref{p3}, $q(G^*) \le d(u) +\frac{2k+2}{d(u)}$. Due to the convexity of the function $f(x)=x+\frac{2k+2}{x}$ on the interval $[3, k-1]$, we have $q(G^*) \le \max \left\{3+\frac{2k+2}{3}, k-1 +\frac{2k+2}{k-1} \right\}$. Now $3+\frac{2k+2}{3}=\frac{2k+11}{3}\le k+2$ for $k\ge 10$. Also, $k-1 +\frac{2k+2}{k-1}\le k-1+3=k+2$, since $\frac{2k+2}{k-1}\le 3$ for $k\ge 10$. Thus $q(G^*) \le k+2$, which is a contradiction, since we already have $q(G^*) \geq q(\theta(2, 3^k)) > k+2$. This rules out Case 3.
\medskip\\ 
\noindent \textbf{Case 4.} $d(u)=2$. Let $N=\{a,b\}$. Since $G^*$ is $C_3$-free, we have $ab\notin E(G^*)$. Define $A=N(a)\setminus\{u\}$, $B=N(b)\setminus\{u\}$ and $N'=V(G^*)\setminus\big(\{u,a,b\}\cup A\cup B\big)$. Since $G^*$ is $C_4$-free, no vertex except $u$ can be adjacent to both $a$ and $b$; otherwise, such a vertex together with $a,u,b$ would form a $C_4$. Hence $A\cap B=\emptyset$. Let $|A|=r$, $|B|=s$ and $|N'|=t$. Then $r+s+t=n-3=2k$. Also, $d(a)=r+1$ and $d(b)=s+1$. Therefore $M(u) = d(u) + \frac{1}{d(u)} \sum_{v \in N} d(v) = 2+\frac{d(a)+d(b)}{2} = 2+\frac{r+s+2}{2}=2+\frac{2k-t+2}{2} = k+3-\frac{t}{2}$. We now split the case according to the possible values of $t$.
\medskip\\ 
\noindent \textbf{Subcase 4.1.} $t\ge 2$. Then $M(u)=k+3-\frac t2\le k+2$. Thus, from Lemma \ref{lem2}, $q(G^*)\le M(u)\le k+2$, a contradiction, since $q(G^*)>k+2$ already.
\medskip\\
\noindent \textbf{Subcase 4.2.} $t=1$. Since $d(a)=r+1$, $\delta(G^*)\geq 2$ and $d(u)=2$, $\sum_{x\in N(a)}d(x) = d(u) + \sum_{x\in A}d(x)\ge 2+2|A|=2(r+1)$. Hence $M(a)= d(a)+\frac{1}{d(a)}\sum_{x\in N(a)}d(x) \ge r+1+\frac{2(r+1)}{r+1} = r+3$. Also, $r+s=2k-t=2k-1$. Therefore, one of $r$ and $s$ is at least $k$. Without loss of generality, assume that $r\ge k$. Thus $M(a)\geq k+3$. On the other hand, $M(u)=k+3-\frac{t}{2}$ gives $M(u)=k+3-\frac{1}{2}=k+\frac{5}{2}$. This gives $M(a)>M(u)$, a contradiction according to the choice of $u$ in $G^*$. Therefore, the case $t=1$ cannot occur.\medskip\\\noindent \textbf{Subcase 4.3.} $t=0$. Then $r+s=2k-t=2k$ and $M(u)=k+3-\frac{t}{2}=k+3$. Since $t=0$ implies $N'=\emptyset$, every vertex other than $u,a,b$ lies in $A\cup B$. Because $G^*$ is $C_3$-free, there are no edges inside $A$ and no edges inside $B$. Moreover, since $G^*$ is $C_4$-free, each vertex of $A$ has at most one neighbor in $B$, and each vertex of $B$ has at most one neighbor in $A$. Now, every vertex of $A$ is adjacent to $a$, and since $\delta(G^*)\geq 2$, it must have at least one additional neighbor. This neighbor must lie in $B$. Similarly, every vertex of $B$ must have a neighbor in $A$. Hence the edges between $A$ and $B$ form a perfect matching, and $r=s=k$. Therefore, $G^*$ consists of the path $a-u-b$ and $k$ internally disjoint paths of length $3$ between $a$ and $b$. Hence $G^* \cong \theta(2,3^k)$, contradicting the assumption $G^*\not\cong \theta(2,3^k)$. Thus the case $t=0$ is also excluded. This completes Case 4.

We have now shown that, under the assumption $G^*\not\cong \theta(2,3^k)$, every possible value of $d(u)$ leads to a contradiction. Therefore, this assumption is false, and hence $G^*\cong \theta(2,3^k)=\theta(2,3^{\frac{n-3}{2}})$. Substituting $k=\frac{n-3}{2}$ in the expression of $j(x)$, the maximum signless Laplacian spectral radius is the largest root of $x^3-\frac{1}{2}(n+9)x^2+(2n+3)x-2n+6=0$. This proves Theorem~\ref{r5}. \qedhere
\end{proof}


\begin{theorem}\label{r6}
Let $G$ be a $\{C_3, C_4\}$-free graph of order $n =2k+2$, $k \geq 22$, with no pendant vertices. Then $q(G) \leq q'(n)$, where $q'(n)$ is the largest root of $x^5-\frac{1}{2}(n+18)x^4 + \frac{3}{2}(3n+16)x^3 - \frac{1}{2}(27n+20)x^2 + 4(4n-9)x -6n+24=0$. Moreover, equality holds if and only if $G \cong T_{\frac{n-2}{2}}$.
\end{theorem}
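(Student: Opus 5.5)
The proof will mirror that of Theorem \ref{r5}. Let $G^*$ be extremal in $\mathscr{G}$ with $n=2k+2$, $k\ge 22$, and let $u$ be as in Notation \ref{n1}. First we verify that $T_k$ lies in $\mathscr{G}$. It has order $2k+2$ and girth $5$: the only new cycles through $w$ have length at least $5$, because the two neighbours of $w$ lie on different constituent paths at distance $3$ from each other. Its maximum degree is $k$, attained at $x$ and $y$. Using the symmetry exchanging $x\leftrightarrow y$ and $P_1\leftrightarrow P_2$, we take the equitable partition with five cells: $\{x,y\}$, the middle vertex of the length-$2$ path, the two neighbours of $w$, $\{w\}$, and the remaining $2(k-3)$ internal vertices. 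The resulting $5\times5$ quotient matrix has a characteristic polynomial which, after substituting $k=\frac{n-2}{2}$, should be the quintic in the statement. By Lemma \ref{lem4} its largest root equals $q(T_k)$. Lemma \ref{lem1} then gives $q(G^*)\ge q(T_k)>k+1$. We will also need a sharper lower bound of the form $q(T_k)>k+1+\frac{c}{k}$. To get it we evaluate the quintic $p(x)$ at a suitable test point and check its sign for $k\ge22$. The large threshold $k\ge 22$ is presumably forced by exactly these sign checks.

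Next we assume $G^*\not\cong T_k$ and split by $d(u)$. Lemma \ref{lem11}\ref{p1} gives $d(u)\le \frac{2k+1}{2}$, so $d(u)\le k$.

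\textbf{Case $d(u)=k$.} Here $|\overline N|=k+1$ and $k\le e(N,\overline N)\le k+1$. If the count is $k$, some vertex $z\in\overline N$ has no neighbour in $N$, and the rest is a matching as in Theorem \ref{r5}. If the count is $k+1$, one vertex of $N$ has two neighbours in $\overline N$. In either case we bound $q(H)$ for $H=G^*[\overline N]$ with Lemma \ref{lem2} and Lemma \ref{lem11}\ref{p2}, separating the subcase where $H$ is star-like from the rest. Then we run the same $3\times 3$ Rayleigh/Cauchy--Schwarz compression with a matrix $L$, followed by Sylvester's criterion (Lemmas \ref{lem8}, \ref{lem9}) at the test point $x_0$. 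The delicate subcases are those where $H$ contains a star $K_{1,k-1}$ or $K_{1,k}$. These are essentially the candidates $T_k$ and its near relatives, and they must be identified structurally rather than bounded. For example, if $H\supseteq K_{1,k}$, then $C_3$/$C_4$-freeness together with $\delta\ge2$ forces the extra vertex and edges into exactly the configuration of $T_k$.

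\textbf{Case $d(u)=k-1$.} Lemma \ref{lem11}\ref{p3} gives $q\le k-1+\frac{2k+1}{k-1}=k+1+\frac{4}{k-1}$. This is not automatically below $k+1$, so we compare it with $p$: we need $p\big(k+1+\frac{4}{k-1}\big)<0$, possibly after tightening the bound on $\sum_{w\in N}d(w)$ using parity of $n$ and the no-pendant condition.

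\textbf{Case $3\le d(u)\le k-2$.} Convexity of $x+\frac{2k+1}{x}$ bounds $q$ by the larger of its values at $x=3$ and $x=k-2$, and both are at most $k+1$ for $k\ge 22$. This is a contradiction.

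\textbf{Case $d(u)=2$.} With $A,B,N'$ and $r+s+t=2k-1$ as before, $M(u)=k+\frac52-\frac t2$. If $t\ge3$, then $M(u)\le k+1$. If $t=0$, then $r+s$ is odd, so a perfect matching between $A$ and $B$ is impossible, and we get a contradiction as in Subcase 4.3. If $t\in\{1,2\}$, then either $M(a)>M(u)$ by the argument of Subcase 4.2 when $\max(r,s)$ is large, or we analyse the structure directly. With $t=1$ the single vertex of $N'$ must attach to $A\cup B$, and the no-pendant and $C_4$-free conditions force $G^*\cong T_k$ or a graph with smaller $q$.

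We expect the main obstacle to be the case $d(u)=k$, together with the near-extremal configurations in the other cases. Unlike the odd case, there is no clean perfect matching here. The gap between $q(T_k)$ and competing graphs is only of order $1/k$, so the test point $x_0$ must be chosen as $k+1+\frac{c}{k}$ with carefully tuned $c$, and each leading-minor and polynomial sign check becomes a polynomial inequality in $k$ valid only for large $k$. Competing graphs with an extra degree-$3$ vertex in $N$ may need a separate exact quotient computation rather than the crude $L$-bound.
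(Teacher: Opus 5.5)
Your overall framework (extremal $G^*$, case split on $d(u)$, Rayleigh compression plus Sylvester's criterion) matches the paper's. However, there are genuine gaps.

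\textbf{The case $d(u)=k-1$.} This is the decisive gap. First, $k-1+\frac{2k+1}{k-1}=k+1+\frac{3}{k-1}$, not $k+1+\frac{4}{k-1}$. More seriously, even the corrected bound cannot be shown to lie below $q(T_k)$. Solving the quotient eigen-equations of $T_k$ to second order gives $q(T_k)=k+1+\frac3k+\frac{2}{k^2}+O(k^{-3})$. Equivalently, combine $p(k+1+\frac3k)=-2k^2+O(k)$ with $p'\approx k^4$ near the top root. By contrast, $k+1+\frac{3}{k-1}=k+1+\frac3k+\frac{3}{k^2}+O(k^{-3})$. So $p$ is positive at your test point for all large $k$, and the inequality you need is false.

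Tightening $\sum_{w\in N}d(w)$ cannot rescue this. Write $e(N,\overline N)=k-1+l$ with $0\le l\le 3$; then $M(u)=k+1+\frac{l}{k-1}$. This suffices for $l\le 2$. For $l=3$, however, every vertex of $\overline N$ has exactly one neighbour in $N$, so $\sum_{w\in N}d(w)=2k+1$ exactly. Since $u$ maximizes $M$, Lemma~\ref{lem2} then gives nothing better than $k+1+\frac{3}{k-1}$.

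This configuration needs a global argument. Vertices of $\overline N$ with the same $N$-neighbour are nonadjacent, and no vertex has two neighbours in another such fibre. Hence $H=G^*[\overline N]$ has $\Delta(H)\le k-2$ and $q(H)\le k-1+\frac{3}{k-2}$. A $3\times3$ majorant $L$ built with $D_N\le 5I$ and $\lVert R\rVert\le 2$ then makes $(k+1)I-L$ positive definite.

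\textbf{The case $d(u)=k$.} This case is also not handled by what you describe. Suppose $e(N,\overline N)=k+1$, so some $v_0\in N$ has two neighbours in $\overline N$, and $\Delta(H)\le k-2$. The same $3\times3$ compression, now with $D_N\le 3I$ and $\lVert R\rVert=\sqrt2$, gives $\lambda_{\max}(L)=k+1+\frac4k+O(k^{-2})>q(T_k)$. You must split $v_0$ into its own block. The result is a $4\times4$ majorant in which $u$ couples to $v_0$ with weight $1$ and to the other neighbours with weight $\sqrt{k-1}$. Exact quotient computations cannot replace this, since $H$ ranges over an infinite family.

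Your structural identifications are also off:
\begin{itemize}
\item $H\supseteq K_{1,k}$ is impossible when there is a double fibre. When $e(N,\overline N)=k$ it forces $G^*\cong\theta(3^k)$, not $T_k$, and you still need $q(\theta(3^k))<k+1+\frac3k$.
\item $T_k$ arises from $\Delta(H)=k-1$ with a double fibre.
\item $\Delta(H)=k-1$ with $e(N,\overline N)=k$ produces $\theta(2,3^{k-2},4)$ and a second graph in which the star centre of $H$ has no neighbour in $N$. These need, respectively, an explicit eigenvalue estimate and a Perron-vector edge switch via Lemma~\ref{lem3}.
\item For $d(u)=2$, $t=1$ you get $\theta(2,3^{k-2},4)$ or $M(a)>M(u)$, never $T_k$, since the $M$-maximizers of $T_k$ have degree $k$.
\end{itemize}

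\textbf{Minor point.} Your five-cell partition of $T_k$ covers only $2k$ vertices. The internal vertices of $P_1,P_2$ not adjacent to $w$ need a sixth cell, and the quotient's characteristic polynomial is then $(x-2)$ times the quintic.
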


\begin{proof}
    Let $G^*$ be a graph with the maximum signless Laplacian spectral radius in the class $\mathscr{G}$ with $n=2k+2$, $k\geq 22$. The graph $T_{\frac{n-2}{2}}=T_k$ belongs to $\mathscr{G}$ and has maximum degree $k$. $Q(T_k)$ admits an equitable partition with corresponding quotient matrix as
\begin{align*}
    E_{T_k} =
    \left(\begin{smallmatrix}
k & 1 & 0 & 1 & 1 & k-3\\
2 & 2 & 0 & 0 & 0 & 0\\
0 & 0 & 2 & 2 & 0 & 0\\
1 & 0 & 1 & 3 & 1 & 0\\
1 & 0 & 0 & 1 & 2 & 0\\
1 & 0 & 0 & 0 & 0 & 3
\end{smallmatrix}\right).
\end{align*}
The characteristic polynomial of $E_{T_k}$ is $j_1(x)=(x-2)j(x)$, where $j(x) = x^5-(k+10)x^4+(9k+33)x^3-(27k+37)x^2+(32k-4)x-12k+12$. Since $T_k$ is not a star, Lemma~\ref{lem1} gives $q(T_k) > k+1>2$ for $k\geq 22$. Hence, by Lemma~\ref{lem4}, $q(T_k)$ is the largest root of $j(x)=0$. It follows that $j\big(k+1+\tfrac{3}{k}\big) = -\frac{1}{k^5}\big(2k^7-14k^6+65k^5-147k^4+297k^3-405k^2+405k-243\big)= -\frac{1}{k^5}\big(2k^6(k-7)+k^4(65k-147)+k^2(297k-405)+(405k-243)\big)<0$, because in the numerator, every grouped term is positive for $k\ge 22$. Therefore $q(T_k) > k+1+\frac{3}{k}$. Now, if possible, let $G^*\not\cong T_k$. Then
\begin{align}\label{eq50}
    q(G^*) \geq q(T_k) > k+1+\frac{3}{k}.
\end{align} 

\noindent By Lemma \ref{lem11}\ref{p1}, we have $d(u) \le (2k+2)-1-d(u)$, which implies $d(u) \leq k+\frac12$. Since $d(u)$ is an integer, we can write $d(u)\leq k$. Together with $\delta(G^*)\geq 2$, this gives $2\leq d(u) \leq k$. We now divide the proof into cases according to the possible values of $d(u)$.
\medskip\\ 
\noindent \textbf{Case 1.} $d(u)=k$. Then $|N|=k$ and $|\overline{N}|=k+1$. By Lemma \ref{lem11}\ref{p1}, $k\le e(N,\overline{N})\le k+1$, so either $e(N,\overline{N})=k+1$ or $e(N,\overline{N})=k$. 
\medskip\\ 
\noindent \textbf{Subcase 1.1.} $e(N,\overline{N}) = k+1$. Then every vertex of $\overline{N}$ has exactly one neighbor in $N$. Since $|N|=k$ and $|\overline{N}|=k+1$, exactly one vertex of $N$ has two neighbors in $\overline{N}$, while every other vertex of $N$ has exactly one. We call a set of vertices in $\overline{N}$ having the same unique neighbor in $N$ a \textit{fibre}. Thus $\overline{N}$ is partitioned into $k$ fibres, one of size $2$ and the remaining ones of size $1$. Let $H=G^*[\overline{N}]$. Since every vertex of $\overline{N}$ has exactly one neighbor in $N$ and $\delta(G^*)\geq 2$, we have $\delta(H)\geq 1$. Two vertices in the same fibre can neither be adjacent nor have a common neighbor in $H$, as this would create a $C_3$ or a $C_4$, respectively. Hence $\Delta(H)\neq k$, because a vertex of degree $k$ would either be adjacent to both vertices of the double fibre or lie in it and be adjacent to the other vertex. We therefore distinguish the following two possibilities.

\textbf{Possibility 1.} $\Delta(H) =k-1$. Let $w$ be a vertex of degree $k-1$ in $H$. Then $w$ cannot lie in the double fibre. Because if $w$ lies in the double fibre, then the other vertex in that double fibre still needs a neighbor in $H$. But any such neighbor is already adjacent to $w$, producing a $C_4$ through the common $N$-neighbor of the double fibre. Therefore $w$ lies in a singleton fibre. Moreover, it will be adjacent to all vertices from singleton fibres, and exactly one vertex from the double fibre. Let the other non-adjacent vertex of the double fibre be $w'$. Since $\delta(H)\ge 1$, $w'$ must be adjacent at least one vertex of $V(H)\setminus \{w\}$. Also $w'$ cannot be adjacent to two neighbors of $w$, otherwise we get a $C_4$ in $H$. Hence $w'$ is adjacent to exactly one neighbor of $w$ in $H$. This forces $G^*$ to coincide with the graph none other than $T_k$, contradicting the assumption $G^*\not\cong T_k$. Thus Possibility 1 cannot occur.

\textbf{Possibility 2.} $\Delta(H) \le k-2$. Since $H$ is also $\{C_3,C_4\}$-free, applying Lemma \ref{lem11}\ref{p2}, for every $w\in V(H)$, we get $\sum_{w'\in N_H(w)} d_H(w') \le |V(H)|-1=k$. If $d_H(w)=1$, then the unique neighbor of $w$ has degree at most $k-2$, so $d_H(w)+\frac{1}{d_H(w)}\sum_{w'\in N_H(w)}d_H(w') \le 1+(k-2)=k-1$. If $2\le d_H(w)\le k-2$, then $d_H(w)+\frac{1}{d_H(w)}\sum_{w'\in N_H(w)}d_H(w') \le d_H(w)+\frac{k}{d_H(w)}$. The function $f(x)=x+\frac{k}{x}$ is convex for $x>0$, and therefore its maximum on $[2,k-2]$ is attained at an endpoint. Since $\left(k-2+\frac{k}{k-2}\right)-\left(2+\frac{k}{2}\right)
=
\frac{k^2-7k+8}{2(k-2)}>0$ for $k\geq 22$, we obtain $d_H(w)+\frac{k}{d_H(w)} \leq k-2+\frac{k}{k-2} = k-1+\frac{2}{k-2}$. Thus, for every vertex $w\in V(H)$, $d_H(w)+\frac{1}{d_H(w)} \sum_{w'\in N_H(w)}d_H(w') \le k-1+\frac{2}{k-2}$. Applying Lemma~\ref{lem2} componentwise to $H$, we get $q(H)\leq k-1+\frac{2}{k-2}$. Now split $N=\{v_0\}\cup N'$, where $v_0$ is the unique vertex of $N$ with two neighbors in $\overline{N}$. With respect to the partition $V(G^*)=\{u\}\cup \{v_0\}\cup N'\cup \overline{N}$, $Q(G^*)$ has the following block form
\begin{align*}
    Q(G^*)=
\left(\begin{smallmatrix}
k & 1 & \mathbf{1}^T & 0\\
1 & 3 & \mathbf{0} & p^T\\
\mathbf{1} & \mathbf{0} & 2I_{k-1} & R\\
0 & p & R^T & Q(H)+I_{k+1}
\end{smallmatrix}\right).
\end{align*}
Here $p \in \mathbb{R}^{k+1}$ corresponds to the adjacency between $\overline{N}$ and $\{v_0\}$. Thus $p$ has exactly two entries equal to $1$, and hence $\lVert p\rVert=\sqrt{2}$. Also, $R\in\mathbb{R}^{(k-1)\times(k+1)}$ corresponds to the adjacency between $N'$ and $\overline{N}$. Each row of $R$ has exactly one entry equal to $1$, and different rows have their $1$'s in different columns. Hence $RR^T=I_{k-1}$ and $\lVert R\rVert=1$. Now, let $z=\left(\begin{smallmatrix}z_1\\z_2\\z_3\\z_4\end{smallmatrix}\right)$, where $z_1,z_2\in \mathbb{R}$, $z_3\in\mathbb{R}^{k-1}$, $z_4\in\mathbb{R}^{k+1}$. Using the upper bound for $q(H)$ along with the other estimates we just obtained and the Rayleigh quotient comparison, we obtain $z^TQ(G^*)z \le kz_1^2 +2|z_1||z_2| +2\sqrt{k-1}|z_1|\lVert z_3\rVert +3z_2^2 +2\lVert z_3\rVert^2 +2\sqrt2 |z_2|\lVert z_4\rVert +2\lVert z_3\rVert \lVert z_4\rVert +\Big(k+\frac{2}{k-2}\Big)\lVert z_4\rVert^2$. Setting $\widehat z=\left(\begin{smallmatrix}|z_1|\\|z_2|\\ \lVert z_3\rVert\\ \lVert z_4\rVert \end{smallmatrix}\right)$, it becomes $z^TQ(G^*)z\leq \widehat z^TL\widehat z$, where
\begin{align*}
    L=\left(\begin{smallmatrix}
k & 1 & \sqrt{k-1} & 0\\
1 & 3 & 0 & \sqrt2\\
\sqrt{k-1} & 0 & 2 & 1\\
0 & \sqrt2 & 1 & k+\frac{2}{k-2}
\end{smallmatrix}\right).
\end{align*}
Since $\lVert \widehat z\rVert=\lVert z\rVert$, it follows that $q(G^*)\le \lambda_{\max}(L)$. We show that $\lambda_{\max}(L)<k+1+\frac{3}{k}$. The leading principal minors of $\left(k+1+\frac{3}{k}\right)I-L$ are: $D_1=1+\frac{3}{k}>0$, $D_2=\frac{k^3-3k+9}{k^2}>0$ and $D_3= \frac{2k^4-5k^3+15k^2-18k+27}{k^3}>0$ for $k\ge 22$. Moreover, $D_4=\det\left(\left(k+1+\frac{3}{k}\right)I-L\right) = \frac{N(k)}{k^4(k-2)}$, where $N(k) = k^6-11k^5+23k^4-36k^3+9k^2+81k-162-2\sqrt2\,k^4(k-2)\sqrt{k-1}$. For $k\geq 22$, we have $\sqrt{k-1}\leq \frac{5k}{24}$ and $\sqrt{2}<\frac{17}{12}$. Hence $N(k) > \frac{1}{144}(59k^6-1414k^5+3312k^4-5184k^3+1296k^2+11664k-23328)>0$ for $k\ge 22$. Therefore $D_4 > 0$. By Lemma \ref{lem9}, $(k+1+\frac{3}{k})I -L$ is positive definite. Thus, by Lemma~\ref{lem8}, $\lambda_{\max}(L)<k+1+\frac{3}{k}$. Combining this with $q(G^*)\le \lambda_{\max}(L)$, we obtain $q(G^*)< k+1+\frac{3}{k}$, contrary to \eqref{eq50}. Hence Possibility~2 also leads to a contradiction, and therefore Subcase~1.1 cannot occur.\medskip\\ 
\noindent \textbf{Subcase 1.2.} $e(N,\overline{N}) = k$. Then every vertex of $N$ has exactly one neighbor in $\overline{N}$. Moreover, exactly one vertex of $\overline{N}$, say $w_0$, has no neighbor in $N$, while every other vertex of $\overline{N}$ has exactly one neighbor in $N$. Let $H=G^*[\overline{N}]$. Since $w_0$ has no neighbor in $N$ and $\delta(G^*)\geq 2$, we have $d_H(w_0)\geq 2$. Also, every vertex of $\overline{N}\setminus\{w_0\}$ has degree at least $1$ in $H$. We distinguish two possibilities according to $\Delta(H)$.

\textbf{Possibility 1.} $\Delta(H)=k$. Since $|V(H)|=k+1$, it follows that $H\cong K_{1,k}$. The center of this star must be the exceptional vertex $w_0$, for otherwise a vertex of $\overline{N}\setminus\{w_0\}$ would have degree $k+1$ in $G^*$, contradicting $\Delta(G^*)\leq k$ obtained from Lemma \ref{lem11}\ref{p1}. Hence $G^*\cong \theta(3^k)$. Now, an equitable quotient matrix of $Q(G^*)$ is $E_{G^*}=
\left(\begin{smallmatrix}
k & k\\
1 & 3
\end{smallmatrix}\right)$. Thus the characteristic polynomial of $E_{G^*}$ is $g(x)=x^2-(k+3)x+2k$, and hence, by Lemma~\ref{lem4}, $q(G^*) = \frac{k+3+\sqrt{k^2-2k+9}}{2}$. It follows that $g(k+1+\frac3k)=\frac{k^2-3k+9}{k^2}>0$ for $k\ge 22$. Also, $k+1+\frac3k-\frac{k+3}{2} =\frac{k^2-k+6}{2k} >0$, so $k+1+\frac{3}{k}$ lies to the right of the midpoint of the two roots of $g(x)=0$. Since $g$ is a monic quadratic, it follows that $k+1+\frac{3}{k}$ lies to the right of the larger root. Therefore $q(G^*)< k+1+\frac3k$, which contradicts \eqref{eq50}. This rules out Possibility 1.

\textbf{Possibility 2.} $\Delta(H) = k-1$. We choose $c\in V(H)$ such that $d_H(c)=k-1$. Since $|\overline{N}|=k+1$, there is exactly one vertex of $H$ not adjacent to $c$; we denote it by $b$. Thus $V(H)=\{b, c\}\cup N_H(c)$, where $|N_H(c)|=k-1$. Since $H$ is triangle-free, $N_H(c)$ is independent. Also, since $H$ is $C_4$-free, the vertex $b$ can have at most one neighbor in $N_H(c)$. We first observe that $b\neq w_0$. Indeed, if $b=w_0$, then $b$ has no neighbor in $N$, so $\delta(G^*)\geq 2$ would imply $d_H(b)\geq 2$, which is impossible because $b$ is not adjacent to $c$ and has at most one neighbor in $N_H(c)$. Hence $b$ has exactly one neighbor in $N$. Since $\delta(G^*)\geq 2$, the vertex $b$ must have a neighbor in $H$. Therefore, $b$ has exactly one neighbor in $N_H(c)$; we denote this vertex by $a$. Consequently, $E(H)=\{cw':w'\in N_H(c)\}\cup\{ab\}$, one of these $w'$s being the vertex $a$. Thus $H$ is a star $K_{1,k-1}$ with one extra pendant edge attached to one of its leaves. Since $w_0$ has no neighbor in $N$, we have $d_H(w_0)\geq 2$. The only vertices of $H$ with $H$-degree at least $2$ are $a$ and $c$. Hence $w_0\in\{a,c\}$.

First suppose that $w_0=a$. Then $G^*$ is precisely $\theta(2,3^{k-2},4)$. The two common end-vertices in $k$ disjoint paths are $u$ and $c$. With $G^*\cong\theta(2,3^{k-2},4)$, $Q(G^*)$ admits an equitable partition with quotient matrix
\begin{align*}
    E_{G^*}=\left(\begin{smallmatrix}
k & 1 & k-2 & 1 & 0\\
2 & 2 & 0 & 0 & 0\\
1 & 0 & 3 & 0 & 0\\
1 & 0 & 0 & 2 & 1\\
0 & 0 & 0 & 2 & 2
\end{smallmatrix}\right).
\end{align*}
Thus $q(G^*)=\lambda_{\max}(E_{G^*})$. Moreover, $E_{G^*}$ is similar to the symmetric matrix
\begin{align*}
    C=
\left(\begin{smallmatrix}
k & \sqrt2 & \sqrt{k-2} & 1 & 0\\
\sqrt2 & 2 & 0 & 0 & 0\\
\sqrt{k-2} & 0 & 3 & 0 & 0\\
1 & 0 & 0 & 2 & \sqrt2\\
0 & 0 & 0 & \sqrt2 & 2
\end{smallmatrix}\right),
\end{align*}
via $C=DE_{G^*}D^{-1}$, where $D=\diag(\sqrt{2}, 1, \sqrt{2(k-2)}, \sqrt{2}, 1)$. Therefore, $q(G^*)=\lambda_{\max}(C)$. We write $(k+1+\tfrac3k)I-C=
\left(\begin{smallmatrix}
1+\frac3k & p^T\\
p & K
\end{smallmatrix}\right)$,
where
\begin{align*}
    p=
\left(\begin{smallmatrix}
-\sqrt2\\
-\sqrt{k-2}\\
-1\\
0
\end{smallmatrix}\right)
\text{ and }
K=
\left(\begin{smallmatrix}
k-1+\frac3k & 0 & 0 & 0\\
0 & k-2+\frac3k & 0 & 0\\
0 & 0 & k-1+\frac3k & -\sqrt2\\
0 & 0 & -\sqrt2 & k-1+\frac3k
\end{smallmatrix}\right).
\end{align*}
The leading principal minors of $K$ are positive for $k\geq 22$, and hence $K$ is positive definite by Lemma \ref{lem9}. Let $S= \left(1+\tfrac3k \right) -p^TK^{-1}p$. Since $K$ is block diagonal, $K^{-1}$ is easy to compute. Writing $a'=k-1+\frac3k$ and $b'=k-2+\frac3k$, we get
\begin{align*}
    K^{-1}= \tfrac{1}{a'b'({a'}^2-2)}
\left(\begin{smallmatrix}
b'({a'}^2-2) & 0 & 0 & 0\\
0 & a'({a'}^2-2) & 0 & 0\\
0 & 0 & {a'}^2b' & \sqrt2{a'}{b'}\\
0 & 0 & \sqrt2{a'}{b'} & {a'}^2{b'}
\end{smallmatrix}\right).
\end{align*}
Using this expression, we obtain $S=1+\frac3k - \frac{2}{a'} - \frac{k-2}{b'} - \frac{a'}{{a'}^2-2}$. After simplification it becomes $S= \frac{10k^6-38k^5+117k^4-216k^3+351k^2-324k+243}{k(k^2-2k+3)(k^2-k+3)(k^4-2k^3+5k^2-6k+9)}$. The numerator can be written as $k^5(10k-38)+k^3(117k-216)+(351k^2-324k+243)$, which is positive for $k\geq 22$. The denominator is also positive in this range. Hence $S>0$. Since $K$ is positive definite and $S>0$, Lemma~\ref{lem10} implies that $\left(k+1+\tfrac3k\right)I-C$ is positive definite. Therefore, by Lemma~\ref{lem8}, $\lambda_{\max}(C)<k+1+\frac{3}{k}$. Since $q(G^*)=\lambda_{\max}(C)$, we get $q(G^*) < k+1+\tfrac3k$, which contradicts \eqref{eq50}.

Next suppose that $w_0=c$. Then the center $c$ has no neighbor in $N$, while every other vertex of $\overline{N}$ has exactly one neighbor in $N$. Thus $G^*$ is precisely the graph $\widetilde{T}$ illustrated in Figure~\ref{fi13}.
\begin{figure}[ht]
    \centering
    \includegraphics[scale=.4]{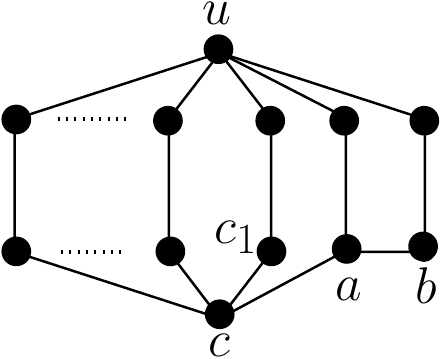}
    \caption{Graph $\widetilde{T}$.}
    \label{fi13}
\end{figure}
Let $u,a,b,c,c_1$ be the vertices as indicated in Figure~\ref{fi13}. If $x_c\geq x_a$, define $G^{*{\prime}}=G^* -\{ab\}+\{cb\}$. Then $G^{*{\prime}} \in \mathscr{G}$, and applying Lemma \ref{lem3}, we get $q(G^{*{\prime}}) > q(G^*)$, a contradiction according to the maximality of $q(G^*)$ in $\mathscr{G}$. If $x_{c} < x_{a}$, we construct $G^{*\prime\prime}=G^* -\{cc_1\}+\{ac_1\}$. Again $G^{*\prime\prime} \in \mathscr{G}$, and Lemma \ref{lem3} gives $q(G^{*\prime\prime}) > q(G^*)$, contradicting the maximality of $q(G^*)$ again. Thus both $w_0=a$ and $w_0=c$ lead to contradictions. Hence Possibility~2 cannot occur.

\textbf{Possibility 3.} $\Delta(H) \le k-2$. As in Possibility~2 of Subcase~1.1, we have $q(H)\le k-1+\frac{2}{k-2}$. With respect to the partition $V(G^*)=\{u\}\cup N\cup \overline{N}$, the signless Laplacian matrix of $G^*$ has the block form $Q(G^*)=
\left(\begin{smallmatrix}
k & \mathbf{1}^T & 0\\
\mathbf{1} & 2I_k & R\\
0 & R^T & Q(H)+E
\end{smallmatrix}\right)$. Here $E$ is the diagonal matrix whose entries are $1$ for the $k$ vertices of $\overline{N}$ adjacent to $N$ and $0$ for $w_0$. Thus $E\leq I_{k+1}$. Also, $R$ corresponds to the adjacency between $N$ and $\overline{N}$; each row of $R$ has exactly one entry equal to $1$, and the $1$'s occur in distinct columns. Hence $RR^T=I_k$ and $\lVert R\rVert=1$. For any vector $z=\left(\begin{smallmatrix}z_1\\z_2\\z_3\end{smallmatrix}\right)$, where $z_1\in\mathbb{R}$, $z_2\in\mathbb{R}^{k}$, $z_3\in\mathbb{R}^{k+1}$, using the preceding estimates, we obtain $z^TQ(G^*)z \le kz_1^2+2\sqrt k\,|z_1|\,\|z_2\|+2\|z_2\|^2 + 2\|z_2\|\,\|z_3\|+(k+\tfrac{2}{k-2})\|z_3\|^2$.
As in Possibility~2 of Subcase~1.1, this gives $q(G^*)\le \lambda_{\max}(L)$, where $L=
\left(\begin{smallmatrix}
k & \sqrt k & 0\\
\sqrt k & 2 & 1\\
0 & 1 & k+\frac{2}{k-2}
\end{smallmatrix}\right)$. Now consider the matrix $(k+1+\frac{3}{k})I -L$. Its leading principal minors are: $D_1=1+\frac{3}{k}$, $D_2=\frac{2k^2+9}{k^2}$ and $D_3= \frac{k^4-3k^3+3k^2-9k-54}{k^4-2k^3}$. For $k\ge 22$, we have $D_1, D_2, D_3 > 0$. Hence, from Lemma \ref{lem9}, $(k+1+\frac{3}{k})I -L$ is positive definite. By Lemma \ref{lem8}, $\lambda_{\max}(L)<k+1+\frac{3}{k}$. Therefore $q(G^*)< k+1+\frac{3}{k}$, which is a contradiction according to \eqref{eq50}. Thus Possibility~3 cannot occur, and consequently Subcase~1.2 cannot occur.

Since neither Subcase~1.1 nor Subcase~1.2 can occur, Case~1 is impossible under the assumption $G^*\not\cong T_k$.
\medskip\\ 
\noindent \textbf{Case 2.} $d(u)=k-1$. Then $|N|=k-1$ and $|\overline{N}|=2k+2-1-(k-1)=k+2$. By Lemma \ref{lem11}\ref{p1}, $k-1\leq e(N,\overline{N})\leq k+2$. Thus, $e(N,\overline{N}) = k-1+l$, where $0\le l \le 3$. We consider two subcases according to the value of $l$.
\medskip\\ 
\noindent \textbf{Subcase 2.1.} $l=3$. Then $e(N,\overline{N})=k+2=|\overline{N}|$. Since each vertex of $\overline{N}$ has at most one neighbor in $N$, equality forces every vertex of $\overline{N}$ to have exactly one neighbor in $N$. For each $v\in N$, let $k_v=e(\{v\},\overline{N})$. Then $k_v\geq 1$ for every $v\in N$, and $\sum_{v\in N}k_v=e(N,\overline{N})=k+2$. Since $|N|=k-1$, we obtain $\sum_{v\in N}(k_v-1)=(k+2)-(k-1)=3$. In particular, $k_v\leq 4$ for every $v\in N$. Let $H=G^*[\overline{N}]$. Since every vertex of $\overline{N}$ has exactly one neighbor in $N$ and $\delta(G^*)\geq 2$, we have $\delta(H)\geq 1$. Recalling the notion of fibre from Subcase~1.1, each vertex of $\overline{N}$ belongs to the fibre determined by its unique neighbor in $N$. As before, a vertex of $\overline{N}$ can neither be adjacent to another vertex in the same fibre nor have more than one neighbor in any other fibre. Since there are $k-1$ fibres in total, $d_H(w)\leq k-2$ for every $w\in V(H)$. Moreover, since $H$ is also $\{C_3,C_4\}$-free, applying Lemma \ref{lem11}\ref{p2} on $H$, we get $\sum_{w'\in N_H(w)}d_H(w')\leq |V(H)|-1=k+1$. If $d_H(w)=1$, then the unique neighbor of $w$ has $H$-degree at most $k-2$, and hence $d_H(w)+\frac{1}{d_H(w)}\sum_{w'\in N_H(w)}d_H(w') \le 1+(k-2)=k-1$. If $2\leq d_H(w)\leq k-2$, then the convexity of 
$f(t)=t+\frac{k+1}{t}$ on $[2,k-2]$ yields $d_H(w)+\frac{1}{d_H(w)}\sum_{w'\in N_H(w)}d_H(w') \leq k-2+\frac{k+1}{k-2} = k-1+\frac{3}{k-2}$. Applying Lemma~\ref{lem2} componentwise to $H$, we get $q(H)\le k-1+\frac{3}{k-2}$. Now, with respect to the partition $V(G^*)=\{u\}\cup N\cup \overline{N}$, the signless Laplacian matrix of $G^*$ has the block form $Q(G^*)= 
\left(\begin{smallmatrix}
k-1 & \mathbf{1}^T & 0\\
\mathbf{1} & D_N & R\\
0 & R^T & Q(H)+I_{k+2}
\end{smallmatrix}\right)$, where $D_N$ is a diagonal matrix with entries $D_N(v,v)=d_{G^*}(v)=1+k_v\le 5$. Also, $R$ corresponds to the adjacency between $N$ and $\overline{N}$. Since every column of $R$ has exactly one entry equal to $1$ and every row has at most $4$ entries equal to $1$, we have $RR^T=\operatorname{diag}(k_v:v\in N)$, and therefore $\lVert R\rVert = \sqrt{\lambda_{\max}(R^TR)} \le 2$. Let $z=
\left(\begin{smallmatrix}
z_1\\
z_2\\
z_3
\end{smallmatrix}\right)$, where $z_1\in\mathbb{R}$, $z_2\in\mathbb{R}^{k-1}$, $z_3\in\mathbb{R}^{k+2}$. Using the estimate for $q(H)$ above, together with $\lVert R\rVert\leq 2$ and $D_N\leq 5I$, we obtain $z^TQ(G^*)z \le (k-1){z_1}^2+2\sqrt{k -1}|z_1| \lVert z_2\rVert +5\lVert z_2\rVert^2 + 4\lVert z_2\rVert \lVert z_3\rVert +\left(k+\tfrac{3}{k-2}\right)\lVert z_3\rVert^2$. By the similar Rayleigh quotient comparison used earlier, from this we get $q(G^*) \le \lambda_{\max}(L)$, where $L=
\left(\begin{smallmatrix}
k-1 & \sqrt{k-1} & 0\\
\sqrt{k-1} & 5 & 2\\
0 & 2 & k+\frac{3}{k-2}
\end{smallmatrix}\right)$. Consider the matrix $(k+1)I -L$. Its leading principal minors are: $D_1=2$, $D_2=k-7$ and $D_3= \frac{(k-17)(k-3)}{k-2}$. For $k\ge 22$, we have $D_1, D_2, D_3 > 0$. Hence, by Lemma \ref{lem9}, $(k+1)I -L$ is positive definite. Therefore, Lemma~\ref{lem8} gives $\lambda_{\max}(L)<k+1<k+1+\frac3k$. Since $q(G^*)\le \lambda_{\max}(L)$, it follows that $q(G^*)< k+1+\frac{3}{k}$, contrary to \eqref{eq50}. Thus Subcase~2.1 cannot occur.
\medskip\\ 
\noindent \textbf{Subcase 2.2.} $0 \le l \le 2$. Since $G^*[N]$ has no edges, we have $\sum_{v\in N}d(v)= d(u)+e(N,\overline{N}) + 2e(N)= (k-1)+(k-1+l) = 2k-2+l$. Thus $M(u) = d(u) + \frac{1}{d(u)}\sum_{v\in N}d(v)= k-1+\frac{2k-2+l}{k-1} =k+1+\frac{l}{k-1} \leq k+1+\frac{2}{k-1}$. For $k\geq 22$, we have $\frac{2}{k-1}<\frac3k$. Therefore, by Lemma \ref{lem2}, $q(G^*) \le M(u) < k+1+\frac3k$, which contradicts \eqref{eq50}. Hence Subcase~2.2 cannot occur. Since neither Subcase~2.1 nor Subcase~2.2 can occur, Case~2 cannot occur.
\medskip\\ 
\noindent \textbf{Case 3.} $3 \le d(u)\le k-2$. From Lemma \ref{lem11}\ref{p3}, we have $q(G^*) \le d(u)+\frac{2k+1}{d(u)}$. The function $f(t)=t+\frac{2k+1}{t}$ is convex for $t>0$, and thus its maximum on $[3,k-2]$ is attained at an endpoint. Now $f(3)=3+\frac{2k+1}{3}=\frac{2k+10}{3}\le k+1+\frac3k$ for $k\geq 22$, and $f(k-2)=k-2+\frac{2k+1}{k-2} = k+\frac{5}{k-2}\le k+1+\frac3k$ for $k\geq 22$. Hence, $q(G^*) \le k+1+\frac3k$. But this contradicts \eqref{eq50}. Therefore, this case cannot arise.
\medskip\\ 
\noindent \textbf{Case 4.} $d(u)= 2$. The discussion is similar to that of Case 4 in the proof of Theorem~\ref{r5}. Let $N=\{a,b\}$. Since $G^*$ is $C_3$-free, $ab\notin E(G^*)$. Define $A=N(a)\setminus\{u\}$, $B=N(b)\setminus\{u\}$ and $N'=V(G^*)\setminus(\{u,a,b\}\cup A\cup B)$. Since $G^*$ is $C_4$-free, no vertex other than $u$ can be adjacent to both $a$ and $b$. Hence $A\cap B=\emptyset$. Let $|A|=r$, $|B|=s$, $|N'|=t$. Then $r+s+t=2k-1$. Also, $d(a)=r+1$ and $d(b)=s+1$. Therefore, $M(u) = 2+\frac{d(a)+d(b)}{2} = 2+\frac{r+s+2}{2} = k+\frac{5}{2}-\frac{t}{2}$. We distinguish the following cases according to the value of $t$.
\medskip\\ 
\noindent \textbf{Subcase 4.1.} $t \ge 3$. Then $M(u)\le k+1<k+1+\frac3k$. Therefore, by Lemma \ref{lem2}, $q(G^*)<k+1+\frac3k$, which contradicts \eqref{eq50}.
\medskip\\ 
\noindent \textbf{Subcase 4.2.} $t = 2$. Then $r+s=2k-3$, and hence one of $r$ and $s$ is at least $k-1$. Without loss of generality, assume that $r\geq k-1$. Since every vertex in $G^*$ has degree at least $2$, we get $\sum_{v\in N(a)}d(v) \ge 2(r+1)$. Thus $M(a) = d(a) +\frac{1}{d(a)}\sum_{v\in N(a)}d(v) \ge r+1+\frac{2(r+1)}{r+1} = r+3 \ge k+2$. On the other hand, $M(u) = k+\frac{5}{2}-\frac{t}{2}$ gives $M(u)=k+\frac32$. Hence $M(a)>M(u)$, a contradiction. Therefore this subcase cannot occur.
\medskip\\ 
\noindent \textbf{Subcase 4.3.} $t = 1$. Then $r+s=2k-2$. First, suppose that one of $r$ and $s$ is at least $k$. Without loss of generality, assume that $r\geq k$. As in Subcase 4.2, we obtain $M(a)\ge r+3 \ge k+3$. But $M(u) = k+\frac{5}{2}-\frac{t}{2}$ gives $M(u)=k+2$. Thus $M(a) > M(u)$, a contradiction again. Hence neither $r$ nor $s$ can be at least $k$, and therefore $r=s=k-1$. In this scenario, $G^*$ is precisely $\theta(2,3^{k-2},4)$. In Subcase 1.2, Possibility 2, it is already established that $q\bigl(\theta(2,3^{k-2},4)\bigr)<k+1+\frac{3}{k}$. Therefore, $q(G^*) < k+1+\frac3k$ here, which contradicts \eqref{eq50}. Hence Subcase 4.3 can not occur.
\medskip\\ 
\noindent \textbf{Subcase 4.4.} $t = 0$. Then $r+s=2k-1$, so one of $r,s$ is at least $k$. Without loss of generality, assume that $r\geq k$. Using the same argument as in Subcase 4.3, we obtain $M(a)\geq r+3\geq k+3$. However, $M(u) = k+\frac{5}{2}-\frac{t}{2}$ gives $M(u)=k+\frac{5}{2}$. Thus $M(a)>M(u)$, a contradiction. Therefore, Subcase~4.4 cannot occur, and so neither can Case~4.

We have now shown that, under the assumption $G^*\not\cong T_k$, every possible value of $d(u)$ leads to a contradiction. Therefore, $G^*\cong T_k=T_{\frac{n-2}{2}}$. Moreover, substituting $k=\frac{n-2}{2}$ into the expression for $j(x)$, we obtain that the maximum signless Laplacian spectral radius is the largest root of $x^5-\frac{1}{2}(n+18)x^4 +\frac{3}{2}(3n+16)x^3 -\frac{1}{2}(27n+20)x^2 + 4(4n-9)x -6n+24=0$. This proves Theorem~\ref{r6}. \qedhere
\end{proof}


\section{Fixed-size extremal characterization for $\{\theta(1,2,2)$, $\theta(1,2,3)\}$-free graphs} \label{sec4}

In this section, we determine the graphs attaining the maximum signless Laplacian spectral radius among all $\{\theta(1,2,2),\theta(1,2,3)\}$-free graphs of size $m$ with no pendant vertices, separately for $m\equiv 1,2\pmod 3$. We continue to use the notations introduced in Notation~\ref{n1}. Recall that $\mathcal{G}$ denotes the class of $\{\theta(1,2,2),\theta(1,2,3)\}$-free graphs of size $m$ with no pendant vertices, $G^*$ denotes a graph attaining the maximum signless Laplacian spectral radius in $\mathcal{G}$, and $u$ is a vertex of $G^*$ satisfying $M(u)=\max\{M(v):v\in V(G^*)\}$. We also write $e'$ for $e(N,\overline{N})$ in $G^*$. We first establish two lemmas that will be used in the sequel.

\begin{lemma}\label{lem12}
For the graph $G^*\in\mathcal{G}$, the following hold:
\begin{enumerate}[topsep=4pt,itemsep=0pt,label=\upshape(\roman*),ref=\upshape(\roman*)]
    \item\label{p4} $G^*[N]=tK_2\cup sK_1$, where $0\le t\le \frac{d(u)}{2}$, $0\le s\le d(u)$ and $d(u)=2t+s$;
    \item\label{p5} $\frac{3d(u)}{2}-\frac{s}{2}+e'+e(\overline{N})=m$, where $s$ is as in \ref{p4}; and
    \item\label{p6} $2\le d(u)\le \frac{2m}{3}$.
\end{enumerate}
\end{lemma}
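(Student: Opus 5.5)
For \ref{p4}, I will study the structure of $G^*[N]$ through the two forbidden theta graphs. First, suppose some vertex $a\in N$ had two neighbors $b,c$ in $N$. Then $u,a,b,c$ induce a subgraph containing $\theta(1,2,2)$: take terminals $u$ and $a$, the edge $ua$, and the two paths $u\!-\!b\!-\!a$ and $u\!-\!c\!-\!a$. Hence $\Delta(G^*[N])\le 1$, so $G^*[N]$ is a disjoint union of edges and isolated vertices, i.e.\ $G^*[N]=tK_2\cup sK_1$. Counting vertices gives $d(u)=2t+s$, and the ranges $0\le t\le d(u)/2$ and $0\le s\le d(u)$ follow at once. (The exclusion of $\theta(1,2,3)$ is not needed for this part.)

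For \ref{p5}, I partition $E(G^*)$ into four classes: edges at $u$, edges inside $N$, edges between $N$ and $\overline{N}$, and edges inside $\overline{N}$. By \ref{p4}, these classes have sizes $d(u)$, $t$, $e'$ and $e(\overline{N})$, respectively. Substituting $t=\frac{d(u)-s}{2}$ gives
\[ m=d(u)+\frac{d(u)-s}{2}+e'+e(\overline{N})=\frac{3d(u)}{2}-\frac{s}{2}+e'+e(\overline{N}). \]

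For \ref{p6}, the lower bound $d(u)\ge 2$ holds because $G^*$ has no pendant vertices and is connected with at least one edge. For the upper bound, I need $e'\ge s/2$ in \ref{p5}, since then $m\ge \frac{3d(u)}{2}$. Each of the $s$ isolated vertices of $G^*[N]$ has degree at least $2$ but only one neighbor ($u$) in $N[u]$, so it sends at least one edge to $\overline{N}$. This gives $e'\ge s\ge s/2$. Alternatively, Lemma~\ref{lem7} applied to $u$ yields $2m\ge 3d(u)+2|\overline{N[u]}|\ge 3d(u)$ directly, and I would cite that as the cleanest route.

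No step is a real obstacle. The only point needing care is checking that the configuration in \ref{p4} genuinely contains $\theta(1,2,2)$ as a subgraph, with $u$ and $a$ as terminals.
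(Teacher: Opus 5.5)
Your proposal is correct and follows the paper's proof essentially step for step: the $\theta(1,2,2)$ with terminals $u$ and $a$ forces $\Delta(G^*[N])\le 1$; the four-way edge partition gives \ref{p5}; and $e'\ge s$ (each isolated vertex of $G^*[N]$ needs a neighbor in $\overline{N}$), substituted into \ref{p5}, gives $m\ge \frac{3d(u)}{2}$. Your alternative of citing Lemma~\ref{lem7} directly for the upper bound in \ref{p6} is also valid, since $\delta(G^*)\ge 2$, and it bypasses \ref{p5} altogether.
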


\begin{proof}
\begin{enumerate}[topsep=4pt,itemsep=0pt,label=\upshape(\roman*),ref=\upshape(\roman*)]
    \item Since $G^*$ is $\{\theta(1,2,2),\theta(1,2,3)\}$-free, every vertex of $G^*[N]$ has degree at most $1$ in $G^*[N]$; otherwise, together with $u$, a copy of $\theta(1,2,2)$ would arise. Hence the result follows.
    \item By \ref{p4}, $G^*[N]=tK_2\cup sK_1$ implies $e(N)=t$, and $d(u)=2t+s$ implies $t=\frac{d(u)-s}{2}$. Using these, we obtain $m=d(u)+e(N)+e'+e(\overline{N})
    =d(u)+t+e'+e(\overline{N})
    =\frac{3d(u)}{2}-\frac{s}{2}+e'+e(\overline{N})$.
    \item Since $G^*$ is connected and has no pendant vertices, we have $d(u)\geq 2$. Moreover, each isolated vertex of $G^*[N]$ must have a neighbor in $\overline{N}$, and hence $e'\geq s$. Therefore, by \ref{p5}, $m\geq \frac{3d(u)}{2}-\frac{s}{2}+s
    =\frac{3d(u)}{2}+\frac{s}{2}
    \geq \frac{3d(u)}{2}$. Thus $d(u)\leq \frac{2m}{3}$, and the proof is complete.\qedhere
\end{enumerate}
\end{proof}

\begin{lemma}\label{lem6}
    Let $m=3k+1$ with $k\ge4$. If
$G^*[N]=tK_2\cup sK_1$,
$e'=s$, and
$d(u)=\frac{2m-5}{3}$,
then $q(G^*)\leq\frac{2m+1}{3}$.
\end{lemma}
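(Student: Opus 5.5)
The bound should follow directly from Lemma~\ref{lem2} together with the choice of $u$ in Notation~\ref{n1}. By that choice, $M(u)=\max\{M(v):v\in V(G^*)\}$. So Lemma~\ref{lem2}, applied to the connected graph $G^*$, gives $q(G^*)\le M(u)$. It therefore suffices to compute $M(u)$ exactly under the stated hypotheses and check that it equals $\frac{2m+1}{3}$.

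To compute $M(u)$, I will count the degree sum over $N$ by sorting the edges at vertices of $N$. Every vertex of $N$ is adjacent to $u$, which contributes $d(u)$. Each edge inside $N$ contributes $2$. Each edge between $N$ and $\overline{N}$ contributes $1$. Hence
\begin{align*}
\sum_{w\in N}d(w)=d(u)+2e(N)+e'.
\end{align*}
By Lemma~\ref{lem12}\ref{p4}, $G^*[N]=tK_2\cup sK_1$, so $e(N)=t$ and $d(u)=2t+s$, giving $2e(N)=d(u)-s$. Using the hypothesis $e'=s$, the sum becomes $d(u)+(d(u)-s)+s=2d(u)$. Consequently
\begin{align*}
M(u)=d(u)+\frac{2d(u)}{d(u)}=d(u)+2.
\end{align*}

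Substituting $d(u)=\frac{2m-5}{3}$ gives $M(u)=\frac{2m-5}{3}+2=\frac{2m+1}{3}$, and hence $q(G^*)\le M(u)=\frac{2m+1}{3}$, which is the claim. The only delicate point is the exact bookkeeping of $\sum_{w\in N}d(w)$, specifically that edges inside $N$ are counted twice. With $m=3k+1$ we have $d(u)=2k-1$, and $k\ge 4$ only ensures this is a legitimate degree. If one wants more than the inequality, namely strictness or identifying when equality can hold, I would add a structural analysis. Lemma~\ref{lem12}\ref{p5} gives $s+2e(\overline{N})=5$. Since $\delta\ge 2$, the degree sum over $\overline{N}$ forces $|\overline{N}|\le 2$. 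One could then check against the equality case of Lemma~\ref{lem2}, whose regular and semi-regular bipartite conditions clearly fail here. The statement as given, however, needs only the non-strict bound.
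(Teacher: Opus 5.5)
Your proposal is correct and follows the paper's proof: both apply Lemma~\ref{lem2} at the maximizing vertex $u$, write $\sum_{w\in N}d(w)=d(u)+2e(N)+e'=d(u)+2t+s=2d(u)$ using $e'=s$, and conclude that $q(G^*)\le d(u)+2=\frac{2m+1}{3}$. The structural remarks in your last paragraph (the identity $s+2e(\overline{N})=5$ and the bound $|\overline{N}|\le 2$) are correct but not needed for the stated non-strict bound.
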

\begin{proof}
    Applying Lemma \ref{lem2}, we obtain $q(G^*) \leq d(u) + \frac{d(u) + 2e(N)+ e'}{d(u)}=d(u) + \frac{d(u)+2t+s}{d(u)}=d(u)+\frac{d(u)+d(u)}{d(u)}=d(u)+2=\frac{2m+1}{3}$.\qedhere
\end{proof}

\begin{theorem}\label{r1}
Let $G$ be a $\{\theta (1, 2, 2), \theta (1, 2, 3)\}$-free graph of size $m = 3k+1$, $k \geq 4$, with no pendant vertices. Then $q(G) \leq q'(m)$, where $q'(m)$ is the largest root of $x^4-\frac{1}{3}(2m+19)x^3 + (4m+10)x^2 -\frac{4}{3}(5m-2)x+ \frac{8}{3}(m-4)=0$. Moreover, equality holds if and only if $G \cong H_{\frac{2m-5}{3}}^{(4)}$.
\end{theorem}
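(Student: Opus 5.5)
Write $m=3k+1$, so the candidate extremal graph is $H_{2k-1}^{(4)}$: $k-1$ triangles and a $C_4$ sharing one central vertex, with $2k-1$ vertices in $F_{2k-1}$ and $2k+2$ vertices overall. It has $3(k-1)+4=3k+1=m$ edges. It is $\{\theta(1,2,2),\theta(1,2,3)\}$-free because every block is a triangle or a $C_4$, and it has no pendant vertices.

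\textbf{Computing $q'(m)$.} Partition the vertex set into the centre, the $2k-2$ triangle vertices, the two $C_4$-neighbours of the centre, and the antipodal $C_4$ vertex. The quotient matrix is
\[
\left(\begin{smallmatrix} 2k & 2k-2 & 2 & 0\\ 1 & 3 & 0 & 0\\ 1 & 0 & 2 & 1\\ 0 & 0 & 2 & 2\end{smallmatrix}\right).
\]
Its characteristic polynomial, after substituting $k=(m-1)/3$, should be the stated quartic. I would verify this, and then, using Lemma~\ref{lem4}, locate its largest root. The aim is a threshold $x_0$ slightly above $2k+1=\frac{2m+1}{3}$, for instance $x_0=2k+1+\frac{c}{k}$ for a suitable constant $c$, with $q(H^{(4)}_{2k-1})>x_0$ shown by evaluating the quartic at $x_0$ and checking its sign. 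In particular $q(H^{(4)}_{2k-1})>\frac{2m+1}{3}$.

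\textbf{Ruling out $G^*\not\cong H^{(4)}_{2k-1}$.} Assume the contrary, so $q(G^*)>x_0$. Use Lemma~\ref{lem12} and the vertex $u$ maximizing $M$. Lemma~\ref{lem2} gives $q(G^*)\le M(u)=d(u)+\frac{d(u)+2t+e'}{d(u)}=d(u)+2+\frac{e'-s}{d(u)}$. By Lemma~\ref{lem12}\ref{p5}, $e'\le m-\frac{3d(u)}{2}+\frac s2$, and $e'\ge s$. Write $d=d(u)\le \frac{2m}{3}$.
\begin{itemize}
\item If $d\le 2k-2$, then $d+2+\frac{e'-s}{d}$ is controlled through $e'-s\le m-\frac{3d}{2}-\frac s2$. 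Convexity in $d$ should give $M(u)\le x_0$ in the middle range, exactly as in Case 3 of Theorems~\ref{r5} and~\ref{r6}. Small $d(u)$ (for example $d=2$) is handled by a neighbour $a$ with $M(a)>M(u)$, as in Case 4 there.
\item Since $d\le \frac{2m}{3}=2k+\frac23$, we have $d\le 2k$. If $d=2k$, then $e'-s\le 1-\frac s2$ and $e(\overline N)$ is tiny. Every isolated vertex of $G^*[N]$ needs a neighbour in $\overline N$, and every vertex of $\overline N$ needs degree at least 2. Counting should leave no room (for example $s=0$ forces $e'+e(\overline N)=1$, impossible without pendant vertices), or else give a few explicit graphs whose $q$ I would bound via quotient matrices.
\item If $d=2k-1=\frac{2m-5}{3}$, then $e'+e(\overline N)=\frac52+\frac s2$.
  \begin{itemize}
  \item When $e'=s$, Lemma~\ref{lem6} gives $q(G^*)\le\frac{2m+1}{3}<q(H^{(4)}_{2k-1})$, a contradiction.
  \item When $e'>s$, $s$ is odd and the leftover edges are at most $\frac{5-s}{2}$ beyond the minimum. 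A short enumeration (using $\theta(1,2,3)$-freeness: an edge of $N$ cannot have both ends sending paths to $\overline N$ that meet) should force $s=1$ with the $C_4$ structure, that is, $G^*\cong H^{(4)}_{2k-1}$, or a $\theta(1,2,\ast)$ configuration that is excluded.
  \end{itemize}
\end{itemize}

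\textbf{Main obstacle.} The hardest part is making $M(u)<x_0$ work near $d=2k-1$ and $d=2k-2$, where the Lemma~\ref{lem2} bound is borderline ($M(u)$ can be about $2k+1+O(1/k)$). There I expect to need a finer argument, as in Subcase 1.1 of Theorem~\ref{r6}: block-decompose $Q(G^*)$ along $\{u\}\cup N\cup\overline N$, bound $q(G^*[\overline N])$ using the fact that $e(\overline N)$ is small, reduce to a small matrix $L$, and show that $x_0I-L$ is positive definite via Lemma~\ref{lem9} and Lemma~\ref{lem8}. Where two alternative graphs survive (analogous to $\widetilde T$), I would use the edge-shifting Lemma~\ref{lem3} to reach a contradiction with maximality.
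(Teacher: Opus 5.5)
Your architecture is the same as the paper's. You split on $d(u)$ for the $M$-maximizing vertex $u$, use the bound $q(G^*)\le d+\frac{m}{d}+\frac12$ for $d\le 2k-2$, and apply Lemma~\ref{lem6} when $d=2k-1$ and $e'=s$. The gap is that you have put the extremal graph in the wrong case, and the step you predict for $d=2k-1$ is false.

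In $H^{(4)}_{2k-1}$ the unique maximizer of $M$ is the centre. It has $M=2k+2$, against $k+3$ or $4$ at every other vertex. The centre has degree $2k$, with $s=2$, $e'=2$ and $|\overline N|=1$. So $H^{(4)}_{2k-1}$ is exactly what the case $d=2k$ produces:
\begin{itemize}
\item The relations $e'+e(\overline N)=1+\frac s2$ and $e'\ge s$ give $s\in\{0,2\}$, and $s=0$ is impossible.
\item For $s=2$ you get $e'=2$ and $e(\overline N)=0$, i.e.\ a single vertex $w$ with two neighbours in $N$.
\item Both neighbours must be the two isolated vertices of $G^*[N]$. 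A vertex of $\overline N$ adjacent to an endpoint of an edge $ab$ of $G^*[N]$ and to a second vertex of $N$ yields $\theta(1,2,2)$ or $\theta(1,2,3)$.
\end{itemize}
This is the equality case, not something to be bounded away with quotient matrices.

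Conversely, when $d(u)=2k-1$ the graph cannot be $H^{(4)}_{2k-1}$, and the subcase $e'>s$ is not empty. Two families survive, and both are genuine members of $\mathcal G$ with no forbidden theta and no pendant vertex:
\begin{itemize}
\item $(s,e')=(3,4)$, with $\overline N$ two nonadjacent vertices each joined to two isolated vertices of $G^*[N]$;
\item $(s,e')=(1,2)$, with $\overline N$ an edge, attached in two different ways.
\end{itemize}
So no enumeration can ``force $G^*\cong H^{(4)}_{2k-1}$ or an excluded configuration''; these graphs must be killed spectrally. The paper uses Lemma~\ref{lem3}: if $x_u\ge x_{v_1}$ it moves an edge $v_1w_1$ to $uw_1$, otherwise it moves $uv_2$ to $v_1v_2$, staying inside $\mathcal G$.

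Your threshold idea would also close this case with correct bookkeeping. Here $e'-s=\frac{5-s}{2}-e(\overline N)\le 1$; for $s=1$ the same theta argument forces $e(\overline N)\ge 1$. Hence $q(G^*)\le M(u)\le 2k+1+\frac{1}{2k-1}$. Now write $h(x)=(x-3)\bigl((x-2)^2-2\bigr)\phi(x)$ with $\phi(x)=x-2k-\frac{2k-2}{x-3}-\frac{2(x-2)}{(x-2)^2-2}$. The function $\phi$ is increasing for $x>2+\sqrt2$, and $\phi\bigl(2k+1+\frac{1}{2k-1}\bigr)<0$, so $q(H^{(4)}_{2k-1})$ is strictly larger.

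Finally, the block-matrix machinery you anticipate is unnecessary. For $2\le d\le 2k-2$ the bound $d+\frac md+\frac12$ is already at most $\frac{2m+1}{3}$. The margin is $\frac{m-10}{m-4}$ at $d=2k-2$ and $\frac{m-13}{6}\ge 0$ at $d=2$, so no neighbour argument is needed for small $d$ either.
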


\begin{proof}
Let $G^*$ be a graph with the maximum signless Laplacian spectral radius in the class $\mathcal{G}$ with $m=3k+1$, $k\geq 4$. The number $\frac{2m-5}{3}$ is an integer, and hence the graph $H_{\frac{2m-5}{3}}^{(4)}$ is well defined. $H_{\frac{2m-5}{3}}^{(4)}$ belongs to $\mathcal{G}$ and has maximum degree $\frac{2m-2}{3}$. The signless Laplacian matrix
$Q\bigl(H_{\frac{2m-5}{3}}^{(4)}\bigr)$ admits an equitable partition whose partition classes consist of the common vertex of $F_{\frac{2m-5}{3}}$ and $C_4$, the remaining vertices of $F_{\frac{2m-5}{3}}$ adjacent to it, the two remaining vertices of $C_4$ adjacent to the common vertex, and the remaining vertex of $C_4$. The corresponding quotient matrix is
\begin{align*}
E_{H_{\frac{2m-5}{3}}^{(4)}}=
\left(\begin{smallmatrix}
\frac{2m-2}{3} & \frac{2m-8}{3} & 2 & 0\\
1 & 3 & 0 & 0\\
1 & 0 & 2 & 1\\
0 & 0 & 2 & 2
\end{smallmatrix}\right).
\end{align*}
Its characteristic polynomial is $h(x)=x^4-\frac{1}{3}(2m+19)x^3+(4m+10)x^2-\frac{4}{3}(5m-2)x+\frac{8}{3}(m-4)$. If $\lambda_h$ is the largest root of $h(x)=0$, then Lemma~\ref{lem4} gives $q\bigl(H_{\frac{2m-5}{3}}^{(4)}\bigr)=\lambda_h$. Moreover, since $H_{\frac{2m-5}{3}}^{(4)}$ is not a star, Lemma~\ref{lem1} implies $q\bigl(H_{\frac{2m-5}{3}}^{(4)}\bigr)>
\frac{2m-2}{3}+1
=
\frac{2m+1}{3}$. Now, if possible, let $G^*\not\cong H_{\frac{2m-5}{3}}^{(4)}$. Then
\begin{align}\label{eq3}
q(G^*)
\geq
q\bigl(H_{\frac{2m-5}{3}}^{(4)}\bigr)
>
\frac{2m+1}{3}.
\end{align}
From Lemma \ref{lem12}\ref{p4}, $G^*[N]=tK_2\cup sK_1$, where $0\le t\le \frac{d(u)}{2}$, $0\le s\le d(u)$ and $d(u)=2t+s$.  Since $m=3k+1$, the number $\frac{2m}{3}$ is not an integer, and $\left\lfloor\frac{2m}{3}\right\rfloor
=
\frac{2m-2}{3}$.
Therefore, by Lemma~\ref{lem12}\ref{p6}, $2 \leq d(u)\leq \frac{2m-2}{3}$.
We distinguish the following cases according to the possible values of $d(u)$.
\medskip\\ 
\noindent \textbf{Case 1.} $d(u)=\frac{2m-2}{3}$. Applying Lemma~\ref{lem7} to $G^*$, we get $2m \geq 3\times \frac{2m-2}{3}+2|\overline{N}|$, and hence $|\overline{N}|\leq 1$. Thus $|\overline{N}|=0$ or $1$.
\medskip\\
\noindent\textbf{Subcase 1.1.} $|\overline{N}|=0$. Then $e'=0$ and $e(\overline{N})=0$. Hence, by Lemma~\ref{lem12}\ref{p5}, $m=\frac{3d(u)}{2}-\frac{s}{2}$. Since $d(u)=\frac{2m-2}{3}$ and $s\geq 0$, we obtain $m\leq \frac{3d(u)}{2}=m-1$, a contradiction. Therefore Subcase~1.1 cannot occur.
\medskip\\
\noindent\textbf{Subcase 1.2.} $|\overline{N}|=1$. Let $\overline{N}=\{w\}$. We first observe that $w$ can be adjacent only to isolated vertices of $G^*[N]$. Indeed, if $w$ was adjacent to a vertex of $N$ incident with an edge in $G^*[N]$, then, since $d(w)\geq 2$, the vertex $w$ would have another neighbor in $N$. If this second neighbor is the other endpoint of the same edge in $G^*[N]$, then a copy of $\theta(1,2,2)$ is formed; otherwise, a copy of $\theta(1,2,3)$ is formed. Hence $w$ is adjacent only to isolated vertices of $G^*[N]$. Since there are exactly $s$ isolated vertices in $G^*[N]$, we have $e'\leq s$. But from the proof of Lemma~\ref{lem12}\ref{p6}, we already know that $e'\geq s$. Hence $e'=s$. Moreover, $e(\overline{N})=0$ and $d(u)=\frac{2m-2}{3}$. Using these, from Lemma~\ref{lem12}\ref{p5}, we obtain $s=2$, and it follows that $G^*\cong H_{\frac{2m-5}{3}}^{(4)}$,
contradicting the assumption $G^*\not\cong H_{\frac{2m-5}{3}}^{(4)}$. Therefore Subcase~1.2 cannot occur, and hence neither can Case~1.
\medskip\\ 
\noindent \textbf{Case 2.} $d(u)=\frac{2m-2}{3}-1=\frac{2m-5}{3}$. First suppose that $e'=s$. Then Lemma~\ref{lem6} gives $q(G^*)\leq \frac{2m+1}{3}$,
contrary to \eqref{eq3}. Hence $e'>s$. From Lemma~\ref{lem7}, $2m\geq 3\times \frac{2m-5}{3}+2|\overline{N}|$, and thus $|\overline{N}|\leq \frac52$. Hence $|\overline{N}|=0,1$ or $2$. If $|\overline{N}|=0$, then $e'=0$. Since $s\geq0$, we cannot have $e'>s$. If $|\overline{N}|=1$, then, as in Subcase~1.2, we necessarily have $e'=s$, again a contradiction. Thus the only remaining possibility is $|\overline{N}|=2$. Let $\overline{N}=\{w_1,w_2\}$. We now distinguish the following two cases according to the possible structures of $G^*[\overline{N}]$.
\medskip\\
\noindent\textbf{Subcase 2.1.} $G^*[\overline{N}]=2K_1$. Since $e(\overline{N})=0$, Lemma~\ref{lem12}\ref{p5} gives $m=\frac{3d(u)}{2}-\frac{s}{2}+e'$. Substituting $d(u)=\frac{2m-5}{3}$, we obtain $e'=\frac{s+5}{2}$. Since $m=3k+1$, we have $d(u)=\frac{2m-5}{3}=2k-1$,
which is odd. Therefore $s$ is also odd, by $d(u)=2t+s$. This, together with $e'>s$ and $e'=\frac{s+5}{2}$, implies that $(s,e')=(1,3)$ or $(3,4)$. The pair $(1,3)$ is impossible. Indeed, since $G^*[\overline{N}]=2K_1$ and $\delta(G^*)\geq 2$, each of $w_1$ and $w_2$ must have at least two neighbors in $N$, implying $e'\ge4$, contradicting $e'=3$. Therefore, it remains to consider $(s,e')=(3,4)$. In this case, each of $w_1$ and $w_2$ has exactly two neighbors in $N$. Moreover, no vertex of $\overline{N}$ can be adjacent to a vertex incident with an edge in $G^*[N]$; otherwise, together with $u$ and another neighbor in $N$, a copy of $\theta(1,2,2)$ or $\theta(1,2,3)$ would arise. Hence every edge between $N$ and $\overline{N}$ is incident with an isolated vertex of $G^*[N]$. Since $s=3$ and $e'=4$, this determines, up to isomorphism, the unique graph $G_1$ shown in Figure~\ref{fi2}(a).
\begin{figure}[ht]
    \centering
    \includegraphics[scale=.4]{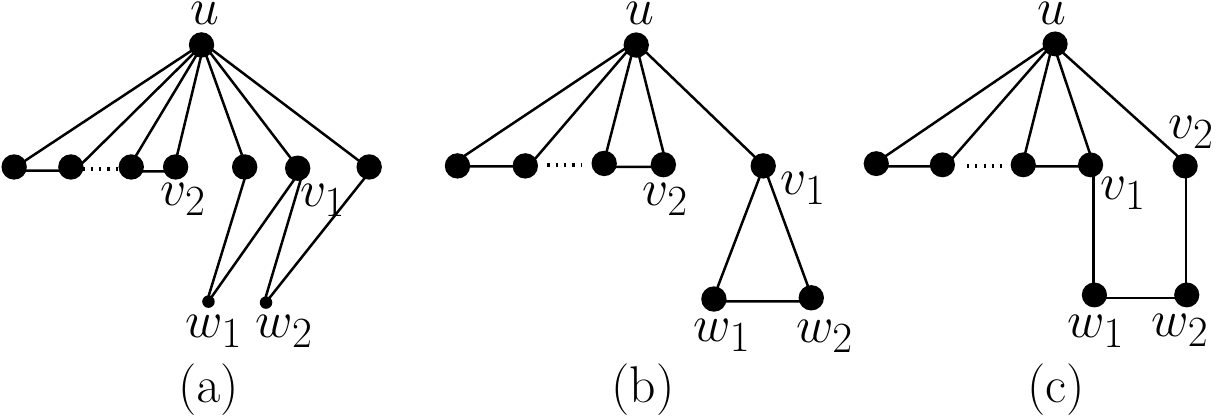}
    \caption{The graphs (a) $G_1$, (b) $G_2$ and (c) $G_3$.}
    \label{fi2}
\end{figure}
Suppose $G^*\cong G_1$, and let $w_1,w_2,u,v_1,v_2$ be the vertices as indicated in Figure~\ref{fi2}(a). If $x_u\geq x_{v_1}$, define $G^{*{\prime}}=G^* -\{v_1w_1\}+\{u w_1\}$. Then $G^{*{\prime}} \in \mathcal{G}$, and by Lemma~\ref{lem3}, $q(G^{*{\prime}}) > q(G^*)$, contradicting the maximality of $q(G^*)$ in $\mathcal{G}$. If $x_{u} < x_{v_1}$, define $G^{*\prime\prime}=G^* -\{uv_2\}+\{v_1v_2\}$. Again, $G^{*\prime\prime}\in\mathcal{G}$, and Lemma~\ref{lem3} gives $q(G^{*\prime\prime}) > q(G^*)$, yielding the same contradiction. Therefore Subcase~2.1 cannot occur.
\medskip\\
\noindent \textbf{Subcase 2.2.} $G^*[\overline{N}]=K_2$. Since $e(\overline{N})=1$ and $d(u)=\frac{2m-5}{3}$, by Lemma~\ref{lem12}\ref{p5}, we obtain $e'=\frac{s+3}{2}$. Since $d(u)=\frac{2m-5}{3}=2k-1$ is odd, $s$ is also odd by $d(u)=2t+s$. These, together with $e'>s$, implies that $(s,e')=(1,2)$. Now the unique isolated vertex of $G^*[N]$ must be adjacent to at least one vertex of $\overline{N}$. Since $G^*[\overline{N}]=K_2$ and $e'=2$, each of $w_1$ and $w_2$ has exactly one neighbor in $N$. Hence, up to isomorphism, there are only two possible configurations: either both vertices of $\overline{N}$ are adjacent to the isolated vertex of $G^*[N]$, or one of them is adjacent to that isolated vertex while the other is adjacent to an endpoint of an edge in $G^*[N]$. These correspond precisely to the graphs $G_2$ and $G_3$ shown in Figure~\ref{fi2}(b) and Figure~\ref{fi2}(c), respectively. Suppose $G^*\cong G_i$ for some $i\in\{2,3\}$, and let the vertices $w_1,w_2,u,v_1,v_2$ be as indicated in each of Figure~\ref{fi2}(b) and \ref{fi2}(c). If $x_u\geq x_{v_1}$, define $G^{*{\prime}}=G^* -\{v_1w_1\}+\{u w_1\}$. Then $G^{*{\prime}} \in \mathcal{G}$. Applying Lemma \ref{lem3}, we get $q(G^{*{\prime}}) > q(G^*)$, a contradiction. If $x_{u} < x_{v_1}$, we define $G^{*\prime\prime}=G^* -\{u v_2\}+\{v_1v_2\}$. Again $G^{*\prime\prime} \in \mathcal{G}$, and Lemma \ref{lem3} gives $q(G^{*\prime\prime}) > q(G^*)$, a contradiction. Therefore Subcase~2.2 cannot occur. Consequently, Case~2 cannot occur.
\medskip\\ 
\noindent \textbf{Case 3.}  $2 \leq d(u) \leq \frac{2m-2}{3} -2= \frac{2m-8}{3}$. By Lemma~\ref{lem2}, we have $q(G^*)
\leq
d(u)+\frac{d(u)+2e(N)+e'}{d(u)}
=
d(u)+\frac{d(u)+2t+e'}{d(u)}$. Since $e'\leq m-d(u)-t$ and $t\leq \frac{d(u)}{2}$, we obtain $q(G^*) \leq d(u) + \frac{m}{d(u)} + \frac{1}{2}$. For $2\leq x\leq \frac{2m-8}{3}$, define $F(x)=\frac{2m+1}{3}-\left(x+\frac{m}{x}+\frac{1}{2}\right)$. Then $F''(x)=-\frac{2m}{x^3}<0$, so $F(x)$ is concave on this interval. Hence $F(x)\geq \min\left\{ F(2), F\left(\frac{2m-8}{3}\right)\right\}$. Now $F(2) = \frac{2m+1}{3}-2-\frac{m}{2}-\frac{1}{2}=\frac{m-13}{6}\geq 0$ for $m\geq 13$. Also, $F(\frac{2m-8}{3}) = \frac{2m+1}{3}-\frac{2m-8}{3}-\frac{3m}{2m-8}-\frac{1}{2}=\frac{m-10}{m-4}> 0$ for $m\geq 13$. Since $m=3k+1$ with $k\geq 4$, we have $m\geq 13$. Therefore, $F(x) \geq 0$ for $2\leq x\leq \frac{2m-8}{3}$. Substituting $x=d(u)$ into it, gives $d(u)+\frac{m}{d(u)}+\frac{1}{2} \leq \frac{2m+1}{3}$. Combining this with $q(G^*) \leq d(u) + \frac{m}{d(u)} + \frac{1}{2}$ yields $q(G^*) \leq \frac{2m+1}{3}$, a contradiction to \eqref{eq3}. Therefore Case~3 cannot occur.

We have shown that, under the assumption $G^*\not\cong H_{\frac{2m-5}{3}}^{(4)}$, none of the Cases~1--3 can occur. Therefore $G^*\cong H_{\frac{2m-5}{3}}^{(4)}$. Furthermore, from the characteristic polynomial of the quotient matrix of
$Q(H_{\frac{2m-5}{3}}^{(4)})$, the maximum signless Laplacian spectral radius is the largest root of $x^4-\frac13(2m+19)x^3
+(4m+10)x^2
-\frac43(5m-2)x
+\frac83(m-4)=0$. This proves Theorem~\ref{r1}.\qedhere
\end{proof}


We next establish two lemmas that will be needed in the sequel.

\begin{lemma}\label{lem13}
Let $m=3k+2$, where $k\geq 6$, and let $G_4$ be the graph obtained by identifying a terminal vertex of $\theta(2^4)$ with the central vertex of the friendship graph $F_{\frac{2m-13}{3}}$. Then $q(G_4)<q\bigl(H_{\frac{2m-7}{3}}^{(5)}\bigr)$.
\end{lemma}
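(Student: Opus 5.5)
The plan is to compare both spectral radii with a single explicit threshold $x_0$, chosen so that $q(G_4)<x_0<q\bigl(H_{\frac{2m-7}{3}}^{(5)}\bigr)$. Each side will be handled by an equitable quotient matrix together with a sign or positive-definiteness check, as in the proofs of Theorems~\ref{r5} and \ref{r6}.

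\emph{Setting up both quotient matrices.} Write $m=3k+2$. Then $H:=H_{\frac{2m-7}{3}}^{(5)}$ is $F_{2k-1}$ (with $k-1$ triangles) glued at its centre to $C_5$. Also $G_4$ is $F_{2k-3}$ (with $k-2$ triangles) glued at its centre to a terminal vertex of $\theta(2^4)$. Both graphs have size $m$, and their centres both have degree $2k$.
\begin{itemize}
\item For $H$, I take the partition: centre; the $2k-2$ friendship leaves; the two $C_5$-neighbours of the centre; the two remaining $C_5$-vertices. This gives the quotient matrix
$\left(\begin{smallmatrix} 2k & 2k-2 & 2 & 0\\ 1&3&0&0\\ 1&0&2&1\\ 0&0&1&3\end{smallmatrix}\right)$.
\item For $G_4$, I take the partition: centre; the $2k-4$ leaves; the four middle vertices of $\theta(2^4)$; the other terminal. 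This gives
$\left(\begin{smallmatrix} 2k & 2k-4 & 4 & 0\\ 1&3&0&0\\ 1&0&2&1\\ 0&0&4&4\end{smallmatrix}\right)$.
\end{itemize}
By Lemma~\ref{lem4}, $q(H)$ and $q(G_4)$ are the largest roots of the characteristic polynomials $p_H(x)$ and $p_{G}(x)$ of these two matrices. I would compute both quartics explicitly.

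\emph{Locating the threshold.} A heuristic eigen-equation at the centre gives $q=2k+1+\varepsilon$ with $\varepsilon\approx 1/k$ for both graphs. The leading corrections coincide: the leaves of $G_4$ lose about $1/k$, and its theta part gains about $2/k$ instead of $1/k$. So the separation must occur at order $1/k^2$. I would therefore take $x_0=2k+1+\frac1k+\frac{c}{k^2}$, with the constant $c$ chosen after expanding $p_H$ and $p_G$ at $2k+1+\frac1k+\frac{y}{k^2}$ in powers of $1/k$.

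\emph{Verifying the two inequalities.}
\begin{itemize}
\item Lower bound for $H$: show $p_H(x_0)<0$ for $k\ge 6$, by writing the numerator as a polynomial in $k$ and grouping terms as in Theorem~\ref{r6}. This forces $q(H)>x_0$.
\item Upper bound for $G_4$: use the symmetrized matrix $C=DE_{G_4}D^{-1}$, exactly as for $\theta(2,3^{k-2},4)$ in Theorem~\ref{r6}. Then show that $x_0I-C$ is positive definite. I would do this via Lemma~\ref{lem10}, taking the Schur complement with respect to the block of the three non-central classes; that block is positive definite because its diagonal entries are about $2k$. Lemma~\ref{lem8} then gives $q(G_4)<x_0$.
\end{itemize}
The positive-definiteness route avoids having to argue separately that $x_0$ lies beyond the other roots of $p_G$.

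\emph{Main obstacle.} The hard step is that the two spectral radii agree up to order $1/k$. The difference is only $\Theta(1/k^2)$ (possibly smaller), so $c$ must be pinned down by a careful second-order expansion. The resulting polynomial inequalities in $k$ must then be checked down to $k=6$, where the asymptotic sign may fail and small cases might need direct numerical verification.

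If the $1/k^2$ terms also coincide, I would switch to a direct comparison instead. I would evaluate $p_G$ at $q(H)$ using $p_H(q(H))=0$, compute $p_G(x)-p_H(x)$ explicitly, and show that it is positive for all $x\ge 2k+1$; together with $p_G>0$ beyond its largest root, this gives $q(G_4)<q(H)$.
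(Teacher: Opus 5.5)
Your route is valid, but it differs from the paper's. The paper never locates either spectral radius more precisely than the crude window $\bigl[\frac{2m-1}{3},\frac{2m+2}{3}\bigr]=[2k+1,2k+2]$ supplied by Lemmas~\ref{lem1} and~\ref{lem2}. On that window it shows three things:
\begin{itemize}
\item both quartics $g$ (for $G_4$) and $h$ are increasing;
\item $D=g-h$ is positive at the left end and negative at the right end;
\item every crossing of $g$ and $h$ inside the window lies strictly above the $x$-axis (Claim~C), proved by reducing $h$ modulo the cubic $D$ to a quadratic.
\end{itemize}
This forces $g$ to vanish first, so the paper needs no second-order asymptotics and no separating constant. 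Your separating-threshold argument checks the sign of $p_H(x_0)$ and uses a Schur complement for $x_0I-C$. That Schur complement is just $x_0-2k-\frac{2k-4}{x_0-3}-\frac{4(x_0-4)}{(x_0-2)(x_0-4)-4}$, i.e.\ $p_G(x_0)$ divided by a positive factor. Your route yields more: an explicit value between the two radii and the order of the gap. The price is high-degree polynomial inequalities in $k$ that are tight near $k=6$.

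Two concrete warnings.

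First, the choice of $c$ is more delicate than a second-order expansion suggests. The expansion gives $q(H)=2k+1+\frac1k+O(k^{-3})$ and $q(G_4)=2k+1+\frac1k-\frac1{2k^2}+O(k^{-3})$. So the $k^{-2}$ terms do differ, and any $c\in(-\frac12,0)$ works asymptotically. At $k=6$, however, $q(G_4)\approx 13.1604$ and $q(H)\approx 13.1645$, so only $c$ in roughly $(-0.23,-0.08)$ is admissible. The natural midpoint $c=-\frac14$ fails there, since it gives $x_0\approx 13.1597<q(G_4)$. A value such as $c=-\frac18$ does work, leaving margins of about $1.3\cdot10^{-3}$ and $2.8\cdot10^{-3}$ at $k=6$.

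Second, your fallback would not work as stated. We have $p_G(x)-p_H(x)=-x^3+(2k+2)x^2-(2k-5)x-(4k+24)$. This equals $8k-18>0$ at $x=2k+1$ but $-4k^2+2k-14<0$ at $x=2k+2$, so it is not positive on $[2k+1,\infty)$. That sign change inside the window is exactly why the paper needs Claim~C. Since the $k^{-2}$ terms differ, you do not need the fallback anyway.
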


\begin{proof}
Both the graphs $G_4$ and $H_{\frac{2m-7}{3}}^{(5)}$ have $m$ edges. The matrix $Q(G_4)$ admits an equitable partition whose classes consist of the common vertex of $F_{\frac{2m-13}{3}}$ and $\theta(2^4)$, the remaining vertices of the friendship graph, the four internal vertices of $\theta(2^4)$, and the other terminal vertex of $\theta(2^4)$. The corresponding quotient matrix is
\[
E_{G_4}=
\left(\begin{smallmatrix}
\frac{2m-4}{3} & \frac{2m-16}{3} & 4 & 0\\
1 & 3 & 0 & 0\\
1 & 0 & 2 & 1\\
0 & 0 & 4 & 4
\end{smallmatrix}\right),
\]
whose characteristic polynomial is $g(x)=x^4-\frac{1}{3}(2m+23)x^3
+\frac{2}{3}(8m+17)x^2
-\frac{8}{3}(4m-5)x
+\frac{16}{3}(m-8)$. Similarly, $Q\bigl(H_{\frac{2m-7}{3}}^{(5)}\bigr)$ admits an equitable partition whose classes consist of the common vertex of $F_{\frac{2m-7}{3}}$ and $C_5$, the remaining vertices of the friendship graph, the two neighbors of the common vertex on $C_5$, and the remaining two vertices of $C_5$. The corresponding quotient matrix is
\[
E_{H_{\frac{2m-7}{3}}^{(5)}}=
\left(\begin{smallmatrix}
\frac{2m-4}{3} & \frac{2m-10}{3} & 2 & 0\\
1 & 3 & 0 & 0\\
1 & 0 & 2 & 1\\
0 & 0 & 1 & 3
\end{smallmatrix}\right),
\]
whose characteristic polynomial is $h(x)=x^4-\frac{2}{3}(m+10)x^3
+\frac{2}{3}(7m+16)x^2
-(10m-7)x
+\frac{4}{3}(5m-16)$. Let $\lambda_g$ and $\lambda_h$ denote the largest roots of $g(x)=0$
and $h(x)=0$, respectively. Then, by Lemma~\ref{lem4}, $q(G_4)=\lambda_g$ and $q\bigl(H_{\frac{2m-7}{3}}^{(5)}\bigr)=\lambda_h$. Both $G_4$ and $H_{\frac{2m-7}{3}}^{(5)}$ have maximum degree $\frac{2m-4}{3}$ and neither of them is a tree. Hence, by Lemma~\ref{lem1}, $q(G_4)>\frac{2m-1}{3}$ and $q\bigl(H_{\frac{2m-7}{3}}^{(5)}\bigr)>\frac{2m-1}{3}$. Moreover, Lemma~\ref{lem2} gives $q(G_4)\leq \frac{2m-4}{3}+\frac{\frac{2m-4}{3}\times 2}{\frac{2m-4}{3}} =\frac{2m+2}{3}$. The same bound is obtained for $q\bigl(H_{\frac{2m-7}{3}}^{(5)}\bigr)$ as well. Therefore, $\frac{2m-1}{3}<\lambda_g,\lambda_h\leq\frac{2m+2}{3}$. We prove the lemma by establishing the following three claims.
\smallskip\\
\noindent\textbf{Claim A.} Both $g(x)$ and $h(x)$ are strictly increasing functions for
$x\geq\frac{2m-1}{3}$.
\smallskip\\
\noindent\textbf{Proof of Claim A.} We obtain $g'(x)=4x^3-(2m+23)x^2
+\frac{4}{3}(8m+17)x-\frac{8}{3}(4m-5)$,
$g''(x)=12x^2-2(2m+23)x+\frac{4}{3}(8m+17)$,
$g^{(3)}(x)=24x-2(2m+23)$,
and $g^{(4)}(x)=24$.
Since $m=3k+2$ with $k\geq 6$, we have $m\geq 20$. As $g^{(4)}(x)>0$, the function $g^{(3)}(x)$ is increasing. Hence, for $x\geq \frac{2m-1}{3}$, $g^{(3)}(x)\geq g^{(3)}\left(\frac{2m-1}{3}\right)= 12m-54>0$. Thus $g''(x)$ is increasing on this interval, and $g''(x)\geq g''\left(\frac{2m-1}{3}\right) = \frac{8}{3}m^2-24m+\frac{118}{3}>0$ for $m\geq 20$. Consequently, $g'(x)$ is increasing, and $g'(x)\geq g'\left(\frac{2m-1}{3}\right) = \frac{8}{27}m^3-4m^2+\frac{106}{9}m+\frac{83}{27}>0$ for $m\geq 20$. Therefore, $g(x)$ is strictly increasing for $x\geq \frac{2m-1}{3}$.

The proof for $h(x)$ is analogous: successive derivatives of $h$ give the same positivity pattern on $x\geq \frac{2m-1}{3}$ for $m\geq 20$. Hence $h(x)$ is also strictly increasing on this interval.
\smallskip\\
\noindent\textbf{Claim B.}
At $x=\frac{2m-1}{3}$, $h(x)<g(x)<0$,
whereas at $x=\frac{2m+2}{3}$, $h(x)>g(x)\geq0$.
\smallskip\\
\noindent\textbf{Proof of Claim B.}
Since $\frac{2m-1}{3}<\lambda_g,\lambda_h\leq\frac{2m+2}{3}$, Claim A implies that both $g(x)$ and $h(x)$ have unique zeros in $\left[\frac{2m-1}{3},\frac{2m+2}{3}\right]$, and $g\left(\frac{2m-1}{3}\right)<0$, $h\left(\frac{2m-1}{3}\right)<0$, $g\left(\frac{2m+2}{3}\right)\geq 0$, $h\left(\frac{2m+2}{3}\right)\geq 0$. Now we define $D(x)=g(x)-h(x)$. Then for $m\geq 20$, $D\left(\frac{2m-1}{3}\right)
=\frac{8m-70}{3}>0$. Also $D\left(\frac{2m+2}{3}\right)
=\frac{-4m^2+22m-154}{9}<0$, since the numerator has negative leading coefficient and negative discriminant. Hence the claim follows.
\smallskip\\
\noindent\textbf{Claim C.}
If $c\in\left[\frac{2m-1}{3},\frac{2m+2}{3}\right]$ and $g(c)=h(c)$,
then $g(c)=h(c)>0$.
\smallskip\\
\noindent\textbf{Proof of Claim C.}
Since $g(c)=h(c)$, we have $D(c)=0$. Further, this gives
\begin{align}\label{eq18}
    c^3=\frac{2}{3}(m+1)c^2 -\frac{1}{3}(2m-19)c -\frac{4}{3}(m+16)
\end{align}
Multiplying both sides of this relation by $c$, we get $c^4=\frac{2}{3}(m+1)c^3 -\frac{1}{3}(2m-19)c^2 -\frac{4}{3}(m+16)c$. Applying \eqref{eq18} again to the right-hand side gives
\begin{align}\label{eq19}
    c^4=\frac{1}{9}(4m^2+2m+61)c^2 -\frac{2}{9}(2m^2-11m+77)c -\frac{8}{9}(m^2+17m+16).
\end{align}
Using both \eqref{eq18} and \eqref{eq19} into the expression for $h(c)$, yields
\begin{align}\label{eq20}
    h(c)=13c^2-\frac{1}{3}(22m+157)c +\frac{4}{3}(11m+80).
\end{align}
Since $c\in \left[\frac{2m-1}{3}, \frac{2m+2}{3}\right]$, we can write $c=\frac{2m-1}{3}+\left(\frac{2m+2}{3}-\frac{2m-1}{3}\right)t$, thus, $c=\frac{2m-1}{3} +t$, where $0 \le t \le 1$.
Substituting this into \eqref{eq20}, we obtain $h(c) =\frac{8}{9}m^2 +\frac{2}{9}(45t-106)m+\frac{1}{9}(117t^2 -549t+1130)=: F(m)$. For fixed $t\in[0,1]$, the polynomial $F(m)$ is quadratic in $m$ with positive leading coefficient, thus representing an upward opening parabola, with its vertex at $(m_v, F(m_v))$, where $m_v=-\frac{\frac{2}{9}(45t-106)}{\frac{16}{9}}=\frac{212-90t}{16}<20$. Moreover, $F(20)=13t^2+139t+10>0$ for $0\leq t\leq 1$. Hence $F(m)>0$ for all $m\geq 20$. Therefore, $h(c)>0$. Since $g(c)=h(c)$, we get $g(c)=h(c)>0$. This proves Claim C.

By Claims A and B, the graphs of $g$ and $h$ intersect the $x$-axis exactly once in the interval $\left[\frac{2m-1}{3},\frac{2m+2}{3}\right]$, namely at $\lambda_g$ and $\lambda_h$, respectively. Moreover, Claim~B shows that the graphs of $g$ and $h$ intersect each other at least once in this interval. Claim C shows that every such intersection point lies above the $x$-axis. Since $h\left(\frac{2m-1}{3}\right) < g\left(\frac{2m-1}{3}\right)<0$, the zero of $g$ occurs before the zero of $h$. Therefore $\lambda_g<\lambda_h$, and hence $q(G_4) < q\bigl(H_{\frac{2m-7}{3}}^{(5)}\bigr)$. \qedhere
\end{proof}

\begin{lemma}\label{lem5}
    Let $m=3k+2$ with $k\ge6$. If
$G^*[N]=tK_2\cup sK_1$,
$e'=s$, and
$d(u)=\frac{2m-7}{3}$,
then $q(G^*)\le\frac{2m-1}{3}$.
\end{lemma}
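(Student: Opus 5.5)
This follows the pattern of Lemma~\ref{lem6}: we apply Lemma~\ref{lem2} at the vertex $u$, which by the choice in Notation~\ref{n1} maximizes $M(v)$, so that $q(G^*)\le M(u)$. We then compute $M(u)$ exactly from the structure of $N$.

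First, compute the degree sum over $N$. Every edge incident with a vertex of $N$ goes to $u$, to another vertex of $N$, or to $\overline{N}$. Hence
\begin{align*}
\sum_{w\in N} d(w)=d(u)+2e(N)+e'.
\end{align*}
By Lemma~\ref{lem12}\ref{p4}, $G^*[N]=tK_2\cup sK_1$, so $e(N)=t$. With the hypothesis $e'=s$, this gives
\begin{align*}
\sum_{w\in N}d(w)=d(u)+2t+s=2d(u),
\end{align*}
using $d(u)=2t+s$. Therefore $M(u)=d(u)+2$.

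Next, substitute $d(u)=\frac{2m-7}{3}$, which gives $M(u)=\frac{2m-1}{3}$. Since $M(u)=\max\{M(v):v\in V(G^*)\}$ and $G^*$ is connected, Lemma~\ref{lem2} yields $q(G^*)\le \frac{2m-1}{3}$, as claimed.

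There is no genuine obstacle here. The only point to check is that the degree-sum identity uses the full count of edges from $N$ to $\overline{N}$, which is exactly $e'$ by definition. The condition $k\ge 6$ is not needed for the inequality itself; it only ensures that $d(u)=\frac{2m-7}{3}=2k-1$ is a positive integer in the intended range.
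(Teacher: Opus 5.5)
Your proposal is correct and follows the paper's own proof: apply Lemma~\ref{lem2} at the maximizing vertex $u$, note that $\sum_{w\in N}d(w)=d(u)+2t+s=2d(u)$ when $e'=s$, and conclude $q(G^*)\le M(u)=d(u)+2=\frac{2m-1}{3}$. Your added remarks on the degree-sum identity and on the role of $k\ge 6$ are accurate.
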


\begin{proof}
    Applying Lemma \ref{lem2}, we obtain $q(G^*) \leq d(u) + \frac{d(u) + 2e(N)+ e'}{d(u)}=d(u) + \frac{d(u)+2t+s}{d(u)}=d(u)+\frac{d(u)+d(u)}{d(u)}=d(u)+2=\frac{2m-1}{3}$.\qedhere
\end{proof}

\begin{theorem}\label{r2}
Let $G$ be a $\{\theta (1, 2, 2), \theta (1, 2, 3)\}$-free graph of size $m = 3k+2$, $k \geq 6$, with no pendant vertices. Then $q(G) \leq q'(m)$, where $q'(m)$ is the largest root of $x^4-\frac{2}{3}(m+10)x^3 + \frac{2}{3}(7m+16)x^2 -(10m-7)x+ \frac{4}{3}(5m-16)=0$. Moreover, equality holds if and only if $G \cong H_{\frac{2m-7}{3}}^{(5)}$.
\end{theorem}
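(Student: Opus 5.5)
We follow the template of Theorem~\ref{r1}. Let $G^*$ be extremal in $\mathcal{G}$ with $m=3k+2$, $k\ge 6$. First, $H_{\frac{2m-7}{3}}^{(5)}\in\mathcal{G}$ has maximum degree $\frac{2m-4}{3}$. Its quotient matrix was computed in Lemma~\ref{lem13}, with characteristic polynomial $h(x)$, so Lemma~\ref{lem4} gives $q\bigl(H_{\frac{2m-7}{3}}^{(5)}\bigr)=\lambda_h$. Lemma~\ref{lem1} then gives $q(G^*)\ge\lambda_h>\frac{2m-1}{3}$ if $G^*\not\cong H_{\frac{2m-7}{3}}^{(5)}$. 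Since $\lfloor\frac{2m}{3}\rfloor=\frac{2m-4}{3}$, Lemma~\ref{lem12}\ref{p6} gives $2\le d(u)\le\frac{2m-4}{3}$. We split into three cases: $d(u)=\frac{2m-4}{3}$, $d(u)=\frac{2m-7}{3}$, and $d(u)\le\frac{2m-10}{3}$.

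\textbf{Case $d(u)=\frac{2m-4}{3}$.} Lemma~\ref{lem7} gives $|\overline N|\le 2$.
\begin{enumerate}[topsep=4pt,itemsep=0pt]
\item[(a)] If $|\overline N|=0$, Lemma~\ref{lem12}\ref{p5} gives $m\le \frac{3d(u)}{2}=m-2$, which is impossible.
\item[(b)] If $|\overline N|=1$, the argument of Subcase~1.2 of Theorem~\ref{r1} shows that $w$ sees only isolated vertices of $G^*[N]$, so $e'=s$. Then \ref{p5} gives $s=4$, and $G^*$ is exactly $G_4$ (a $\theta(2^4)$ glued at $u$ to a friendship graph). Lemma~\ref{lem13} rules this out.
\item[(c)] If $|\overline N|=2$, \ref{p5} gives $\frac{s}{2}=e'+e(\overline N)-2$. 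Since $e'\ge s$, we get $s\le 4-2e(\overline N)$, so the configurations are finite. Vertices of $\overline N$ again cannot touch endpoints of edges of $G^*[N]$ unless they have a single $N$-neighbour, by the $\theta(1,2,2)/\theta(1,2,3)$ argument. The possible pairs are $(s,e',e(\overline N))\in\{(0,1,1),(2,2,1),(0,2,0),(2,3,0),(4,4,0)\}$, and parity ($d(u)=2k$ even) forces $s$ even. Degree conditions $\delta\ge2$ remove most of them. For example, $e(\overline N)=0$ requires $e'\ge4$, leaving $s=4$, $e'=4$, which is $H^{(5)}$-like only if a $C_5$ arises. In fact that configuration gives $w_1,w_2$ sharing two isolated neighbours, i.e.\ $\theta(2^4)$ again, or a $C_5$ pattern. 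I expect the survivors to be either $H_{\frac{2m-7}{3}}^{(5)}$ itself (as a $C_5$ through $u$ with $\overline N$ an edge and $s=2$, $e'=2$), or small graphs killed by the edge-rotation trick of Lemma~\ref{lem3}, exactly as with $G_1,G_2,G_3$ in Theorem~\ref{r1}.
\end{enumerate}

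\textbf{Case $d(u)=\frac{2m-7}{3}$.} If $e'=s$, Lemma~\ref{lem5} contradicts $q(G^*)>\frac{2m-1}{3}$, so $e'>s$. Lemma~\ref{lem7} gives $|\overline N|\le 3$, and $|\overline N|\le1$ again forces $e'=s$, so $|\overline N|\in\{2,3\}$. Now \ref{p5} reads $e'+e(\overline N)=\frac{s+7}{2}$ with $s$ odd ($d(u)=2k-1$), which bounds $s\le 5$ and leaves finitely many small configurations. For each, I would use the Lemma~\ref{lem3} switching between $u$ and a neighbour $v_1$ carrying the external edges: move an $\overline N$-edge from $v_1$ to $u$ if $x_u\ge x_{v_1}$, otherwise move an edge $uv_2$ to $v_1v_2$. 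This requires checking that the new graph stays in $\mathcal{G}$ with no pendant vertices. Where a switching is awkward, I would fall back on a quotient-matrix/Sylvester computation as in Lemma~\ref{lem13}, comparing against $\lambda_h$.

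\textbf{Case $2\le d(u)\le\frac{2m-10}{3}$.} As in Case~3 of Theorem~\ref{r1}, Lemma~\ref{lem2} gives $q(G^*)\le d(u)+\frac{m}{d(u)}+\frac12$. Concavity of $\frac{2m-1}{3}-(x+\frac mx+\frac12)$ reduces the comparison to the endpoints. At $x=2$ the difference is $\frac{m-20}{6}\ge0$ for $m\ge 20$. At $x=\frac{2m-10}{3}$ it is $\frac{3}{2}-\frac{3m}{2m-10}+\ldots$, which I expect to be positive for $m\ge20$; this is where $k\ge6$ is needed. This contradicts $q(G^*)>\frac{2m-1}{3}$. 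Finally, $G^*\cong H_{\frac{2m-7}{3}}^{(5)}$ and $q'(m)=\lambda_h$.

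The main obstacle is the exhaustive enumeration in the two top-degree cases, and ensuring that every non-extremal configuration admits a valid Lemma~\ref{lem3} switching or a direct polynomial comparison. The graph $G_4$ is the genuinely competitive one, which is why Lemma~\ref{lem13} was prepared.
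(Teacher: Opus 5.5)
Your overall strategy is the paper's: compare with $H^{(5)}_{\frac{2m-7}{3}}$ to get $q(G^*)>\frac{2m-1}{3}$, split on $d(u)$, use Lemma~\ref{lem7} and Lemma~\ref{lem12}\ref{p5} to reduce the top degrees to finitely many configurations, dispose of those by Lemma~\ref{lem3} or explicit spectral computation, and handle small $d(u)$ by Lemma~\ref{lem2} and concavity. As written, however, the proposal has real gaps.

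First, $\lfloor 2m/3\rfloor=2k+1=\frac{2m-1}{3}$, not $\frac{2m-4}{3}=2k$, so you have omitted the case $d(u)=\frac{2m-1}{3}$. It is easy: Lemma~\ref{lem7} forces $\overline N=\emptyset$, and then Lemma~\ref{lem12}\ref{p5} gives $s=-1$. But it has to be there.

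Second, in the case $d(u)=\frac{2m-4}{3}$, $|\overline N|=2$, you misidentify the surviving configuration $(s,e',e(\overline N))=(4,4,0)$. Since $e'=s$, each of the four isolated vertices $v_1,\dots,v_4$ of $G^*[N]$ has exactly one neighbour in $\overline N$, so $w_1\sim v_1,v_2$ and $w_2\sim v_3,v_4$. Thus $G^*$ is the friendship graph with two $4$-cycles attached at $u$ (the paper's $G_5$); it is neither $\theta(2^4)$ nor a $C_5$. It cannot be killed by a crude bound, because Lemma~\ref{lem1} already gives $q(G_5)>\Delta+1=\frac{2m-1}{3}$. Nor can your $u$--$v_1$ switching handle it: in the branch $x_u\ge x_{v_i}$, every $v_i$ has degree $2$, so moving its edge into $\overline N$ over to $u$ creates a pendant vertex. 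The paper instead assumes $x_{w_1}\ge x_{w_2}$, transfers both $N$-edges of $w_2$ to $w_1$ via Lemma~\ref{lem3}, and deletes the now isolated $w_2$. This lands in $G_4\in\mathcal G$ with strictly larger $q$.

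Third, the case $d(u)=\frac{2m-7}{3}$ is only a plan, and it is exactly where the uniform switching breaks down. The enumeration produces the paper's $G_6,\dots,G_{14}$.
\begin{itemize}
\item For $G_6$ ($s=3$, $e'=5$, $\overline N$ independent, $w_1\sim v_1,v_2,v_3$, $w_2\sim v_1,v_2$), every transfer to $u$ in the branch $x_u\ge x_{v_i}$ fails. Moving $v_3w_1$ leaves $v_3$ pendant. Moving $v_1w_1$ puts two triangles on $uw_1$, giving a $\theta(1,2,2)$. Moving $v_1w_2$ creates the triangle $uw_2v_2$ together with the path $uv_3w_1v_2$, giving a $\theta(1,2,3)$.
\item The paper therefore computes the degree-$6$ characteristic polynomial of an equitable quotient matrix of $Q(G_6)$. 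Via successive derivatives it shows this polynomial is positive on $[\frac{2m-1}{3},\infty)$, so $q(G_6)<\frac{2m-1}{3}$.
\item Similarly, $G_8$ (where $w_1,w_2$ each have three neighbours in $N$, one of them common) needs a switch between $w_1$ and $w_2$, not between $u$ and $v_1$.
\end{itemize}
Your fallback to ``a quotient-matrix/Sylvester computation'' points in the right direction, but without the enumeration and these verifications this case is not proved.

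Finally, there are minor slips in the last case. $F(2)=\frac{m-17}{6}$, and $F\bigl(\frac{2m-10}{3}\bigr)=\frac{5}{2}-\frac{3m}{2m-10}=\frac{2m-25}{2m-10}$, which is positive already for $m\ge 13$. So the hypothesis $k\ge 6$ is not needed here; it is really used in Lemma~\ref{lem13} and in the $G_6$ computation.
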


\begin{proof}
Let $G^*$ be a graph with the maximum signless Laplacian spectral radius in the class $\mathcal{G}$ with $m=3k+2$, $k\geq 6$. The graph $H_{\frac{2m-7}{3}}^{(5)}$ belongs to $\mathcal{G}$. In the proof of Lemma~\ref{lem13}, it was shown that $q\bigl(H_{\frac{2m-7}{3}}^{(5)}\bigr)$ is the largest root of $h(x)=0$, where $h(x)=x^4-\frac{2}{3}(m+10)x^3
+\frac{2}{3}(7m+16)x^2
-(10m-7)x
+\frac{4}{3}(5m-16)$, and that $q(H_{\frac{2m-7}{3}}^{(5)}) > \frac{2m-1}{3}$.
Suppose, if possible, that $G^*\not\cong H_{\frac{2m-7}{3}}^{(5)}$. Then
\begin{align}\label{eq11}
    q(G^*) \geq q(H_{\frac{2m-7}{3}}^{(5)}) > \frac{2m-1}{3}.
\end{align} 
By Lemma \ref{lem12}\ref{p4}, $G^*[N]=tK_2\cup sK_1$, where $0\le t\le \frac{d(u)}{2}$, $0\le s\le d(u)$ and $d(u)=2t+s$.  Since $m=3k+2$, the number $\frac{2m}{3}$ is not an integer, and $\left\lfloor\frac{2m}{3}\right\rfloor
=
\frac{2m-1}{3}$.
Therefore, by Lemma~\ref{lem12}\ref{p6}, $2 \leq d(u)\leq \frac{2m-1}{3}$.
We distinguish the following cases according to the possible values of $d(u)$.
\medskip\\ 
\noindent \textbf{Case 1.} $d(u)=\frac{2m-1}{3}$. By Lemma \ref{lem7}, $2m \geq 3\times \frac{2m-1}{3} +2|\overline{N}|$, which gives $|\overline{N}| \leq \frac{1}{2}$. Hence $|\overline{N}|=0$, and so $e'=0$ and $e(\overline{N})=0$. By Lemma~\ref{lem12}\ref{p5}, we obtain $s=-1$, a contradiction. Therefore, Case 1 cannot occur.
\medskip\\ 
\noindent \textbf{Case 2.} $d(u)=\frac{2m-1}{3}-1=\frac{2m-4}{3}$. By Lemma~\ref{lem7}, $2m\geq 3\times\frac{2m-4}{3}+2|\overline{N}|$, and hence $|\overline{N}|\leq 2$. Therefore
$|\overline{N}|=0,1$ or $2$.
\medskip\\
\noindent\textbf{Subcase 2.1.} $|\overline{N}|=0$. Then $e'=0$ and $e(\overline{N})=0$. Substituting these together with
$d(u)=\frac{2m-4}{3}$ into Lemma~\ref{lem12}\ref{p5}, we obtain
$s=-4$, a contradiction.
\medskip\\
\noindent\textbf{Subcase 2.2.} $|\overline{N}|=1$. Let $\overline{N}=\{w\}$. As in Subcase~1.2 of the proof of Theorem~\ref{r1}, the vertex $w$ can be adjacent only to isolated vertices of $G^*[N]$. Hence $e'\leq s$. But from the proof of Lemma~\ref{lem12}\ref{p6}, we already have $e'\geq s$, therefore $e'=s$. Also, $|\overline{N}|=1$ implies $e(\overline{N})=0$. Substituting these together with
$d(u)=\frac{2m-4}{3}$ into Lemma~\ref{lem12}\ref{p5}, we obtain $s=4$, and hence $e'=4$. Thus the unique vertex of $\overline{N}$ is adjacent precisely to the four isolated vertices of $G^*[N]$, while all remaining vertices of $N$ are paired by the edges of $G^*[N]$. Therefore, up to isomorphism, the only possible configuration in $\mathcal{G}$ is the graph $G_4$ shown in Figure~\ref{fi3}.
\begin{figure}[ht]
    \centering
    \includegraphics[scale=.4]{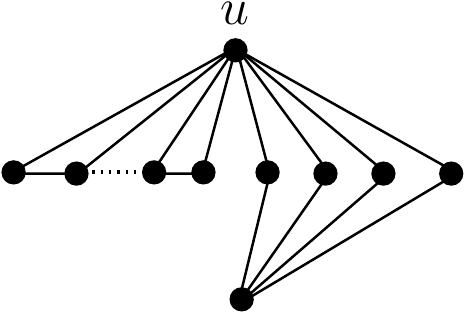}
    \caption{Graph $G_4$.}
    \label{fi3}
\end{figure}
If possible, let $G^*\cong G_4$. Then Lemma~\ref{lem13} gives $q(G^*)<q\bigl(H_{\frac{2m-7}{3}}^{(5)}\bigr)$, contradicting the maximality of $q(G^*)$ in $\mathcal{G}$. Hence Subcase~2.2 cannot occur.
\medskip\\ 
\noindent \textbf{Subcase 2.3.} $|\overline{N}|=2$. Let $\overline{N}=\{w_1,w_2\}$. There are two possibilities for the structure of $G^*[\overline{N}]$.

\textbf{Possibility 1.} $G^*[\overline{N}]=2K_1$. Then $e(\overline{N})=0$. From Lemma~\ref{lem12}\ref{p5}, we obtain $e'=\frac{s+4}{2}$.
Since $d(u)=\frac{2m-4}{3}$ is even for $m=3k+2$, $s$ is also even, by $d(u)=2t+s$. Moreover, $e'\geq s$. Together with $e'=\frac{s+4}{2}$, this implies that $(s,e')=(0,2)$, $(2,3)$ or $(4,4)$. Since $G^*[\overline{N}]=2K_1$ and $\delta(G^*)\geq2$, each of $w_1$ and $w_2$ must have at least two neighbors in $N$. Hence $e'\geq4$, and therefore the pairs $(0,2)$ and $(2,3)$ are impossible. Thus, it remains to consider the pair $(s,e')=(4,4)$. In this case, each of $w_1$ and $w_2$ has exactly two neighbors in $N$, in particular, all four edges between $N$ and $\overline{N}$ are incident with the four isolated vertices of $G^*[N]$. Up to isomorphism, this yields the unique possible configuration $G_5$, shown in Figure~\ref{fi4}.
\begin{figure}[ht]
    \centering
    \includegraphics[scale=.4]{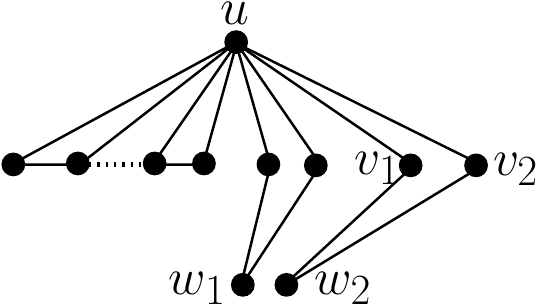}
    \caption{Graph $G_5$.}
    \label{fi4}
\end{figure}
Suppose that $G^*\cong G_5$, and let the vertices $u,w_1,w_2,v_1,v_2$ be as indicated in Figure~\ref{fi4}. Without loss of generality, assume that $x_{w_1}\geq x_{w_2}$. Define $G^{*{\prime}}=G^* -\{v_1 w_2, v_2 w_2\}+\{v_1 w_1, v_2 w_1\}$. By Lemma \ref{lem3}, $q(G^{*{\prime}}) > q(G^*)$. In $G^{*\prime}$, the vertex $w_2$ is isolated. Delete this isolated vertex and denote the resulting graph by $G^{*{\prime\prime}}=G^{*{\prime}} - {w_2}$. Then $G^{*\prime\prime}\in\mathcal{G}$ and $q(G^{*\prime\prime})=q(G^{*\prime})>q(G^*)$, contradicting the maximality of $q(G^*)$ in $\mathcal{G}$. Therefore $G^*\not\cong G_5$.

\textbf{Possibility 2.} $G^*[\overline{N}]=K_2$. Since $e(\overline{N})=1$, Lemma~\ref{lem12}\ref{p5} gives $e'=\frac{s+2}{2}$. As above, $s$ is even. Together with $e'\geq s$, this implies that $(s,e')=(0,1)$ or $(2,2)$. The pair $(0,1)$ is impossible, since $\delta(G^*)\geq2$ and $G^*[\overline{N}]=K_2$ imply that at least two edges join $N$ and $\overline{N}$. For the pair $(2,2)$, the two isolated vertices of $G^*[N]$ must be adjacent to the two vertices of $\overline{N}$, respectively. This results in $G^*\cong H_{\frac{2m-7}{3}}^{(5)}$,
contradicting the assumption $G^*\not\cong H_{\frac{2m-7}{3}}^{(5)}$.

Since both Possibilities~1 and~2 lead to contradictions, Subcase~2.3 cannot arise, and hence neither can Case~2.
\medskip\\
\noindent \textbf{Case 3.} $d(u)=\frac{2m-1}{3}-2=\frac{2m-7}{3}$. First suppose that $e'=s$. Then Lemma~\ref{lem5} gives $q(G^*)\leq \frac{2m-1}{3}$, contrary to \eqref{eq11}. Hence $e'>s$. From Lemma~\ref{lem7}, we obtain $|\overline{N}|\leq \frac72$. Hence $|\overline{N}|=0,1,2$ or $3$. Now, if $|\overline{N}|=0$, then $e'=0$. Since $s\geq0$, we cannot have $e'>s$. If $|\overline{N}|=1$, then, as in Subcase~1.2 of the proof of Theorem~\ref{r1}, we necessarily have $e'=s$, again a contradiction. Thus the remaining possibilities are $|\overline{N}|=2$ and $|\overline{N}|=3$.
\medskip\\
\noindent\textbf{Subcase 3.1.} $|\overline{N}|=2$. Let $\overline{N}=\{w_1,w_2\}$. There are two possibilities for $G^*[\overline{N}]$.

\textbf{Possibility 1.} $G^*[\overline{N}]=2K_1$. Then $e(\overline{N})=0$, and Lemma~\ref{lem12}\ref{p5} gives $e'=\frac{s+7}{2}$. Since $d(u)=\frac{2m-7}{3}$ is odd, $s$ is also odd. Together with $e'>s$, this implies that $(s,e')=(1,4),(3,5)$ or $(5,6)$. The pair $(1,4)$ is impossible. Indeed, by the same $\{\theta (1, 2, 2), \theta (1, 2, 3)\}$-free restriction used earlier, all edges between $N$ and $\overline{N}$ must be incident with isolated vertices of $G^*[N]$. However, when $s=1$, there is only one such vertex, and since $G^*$ is simple, it can be adjacent to at most the two vertices $w_1$ and $w_2$. Hence $e'\leq2$, contradicting $e'=4$. It now remains to consider the pairs $(3,5)$ and $(5,6)$.

\textbf{(i)} For the pair $(3,5)$, by the same $\{\theta (1, 2, 2), \theta (1, 2, 3)\}$-free restriction, all five edges between $N$ and $\overline{N}$ are incident with the three isolated vertices of $G^*[N]$. Moreover, since $G^*[\overline{N}]=2K_1$ and $\delta(G^*)\geq2$, each of $w_1$ and $w_2$ has at least two neighbors in $N$. These conditions determine, up to isomorphism, a unique graph in $\mathcal G$, denoted by $G_6$ and shown in Figure~\ref{fi5}.
\begin{figure}[ht]
    \centering
    \includegraphics[scale=.4]{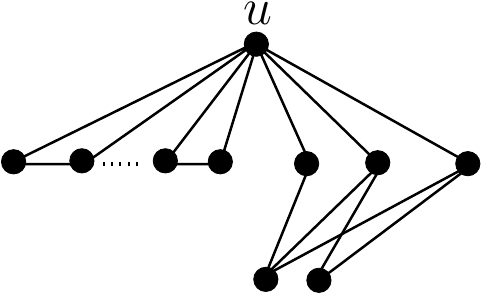}
    \caption{Graph $G_6$.}
    \label{fi5}
\end{figure}
If $G^*\cong G_6$, $Q(G^*)$ admits an equitable partition corresponding to which the quotient matrix is
\begin{align*}
    E_{G^*} =
    \left(\begin{smallmatrix}
        \frac{2m-7}{3} & \frac{2m-16}{3} & 1 & 2 & 0 & 0\\
        1 & 3 & 0 & 0 & 0 & 0\\
        1 & 0 & 2 & 0 & 1 & 0\\
        1 & 0 & 0 & 3 & 1 & 1\\
        0 & 0 & 1 & 2 & 3 & 0\\
        0 & 0 & 0 & 2 & 0 & 2
    \end{smallmatrix}\right).
\end{align*}
Its characteristic polynomial is $g(x) =  x^6-\frac{2}{3}(m+16)x^5 + (8m+34)x^4
    -\frac{8}{3}(13m+4)x^3  + \frac{1}{3}(202m-401)x^2 -\frac{1}{3}(172m-728)x +16m-128$. If $\lambda_g$ is the largest root of $g(x)=0$, then $q(G^*) = \lambda_g$ by Lemma \ref{lem4}. We show that $g(x)>0$ for all $x\geq \frac{2m-1}{3}$. Differentiating successively with respect to $x$, we obtain $g'(x) =  6x^5-\frac{10}{3}(m+16)x^4 + 4(8m+34)x^3 -8(13m+4)x^2  + \frac{2}{3}(202m-401)x-\frac{1}{3}(172m-728)$, $g''(x) =  30x^4-\frac{40}{3}(m+16)x^3 + 12(8m+34)x^2 -16(13m+4)x  + \frac{2}{3}(202m-401)$, $g^{(3)}(x) =  120x^3-40(m+16)x^2 + 24(8m+34)x -16(13m+4)$, $g^{(4)}(x) =  360x^2-80(m+16)x + 24(8m+34)$, $g^{(5)}(x) =  720x-80(m+16)$ and $g^{(6)}(x) =  720$. Since $g^{(6)}(x)>0$, $g^{(5)}(x)$ is increasing. For $x\geq \frac{2m-1}{3}$, $g^{(5)}(x) \geq g^{(5)}\!\left(\frac{2m-1}{3}\right) = 400m - 1520 > 0$ as  $m \geq 20$. Hence $g^{(4)}(x)$ is increasing on this interval. Moreover, $g^{(4)}(x) \geq g^{(4)}\!\left(\frac{2m-1}{3}\right) = \frac{1}{3}(320 m^2 - 2384m + 3848)>0$ for $m\geq 20$, Thus $g^{(3)}(x)$ is increasing for $x\geq \frac{2m-1}{3}$. At $x=\frac{2m-1}{3}$, we have $g^{(3)}(x) = \frac{1}{9}(160m^3- 1728m^2 + 5208m - 3704)\geq 76584 > 0$, $g''(x) = \frac{1}{81}(160m^4- 2144m^3+ 8160m^2 - 4796m - 15584)\geq 143216 > 0$ and $g'(x) = \frac{1}{243}(32m^5-416m^4 +464m^3+ 12632m^2 - 59114m + 78368)\geq 179016 > 0$ for all $m\geq 20$. Each of these polynomials is increasing in $m$ for $m\geq 20$, which can be verified by differentiation. Hence each attains its minimum at $m=20$ over the required range. Consequently, $g'(x)>0$ for $x\geq \frac{2m-1}{3}$, and therefore $g(x)$ is increasing on this interval. Finally, $g\left(\tfrac{2m-1}{3}\right) = \frac{1}{243}(32m^5-752m^4+6680m^3-29020m^2+62932m-54160)\ge 103360 >0$ for $m\geq 20$, by the similar procedure. Therefore, $g(x)\geq g(\tfrac{2m-1}{3})>0$ for all $x\geq \frac{2m-1}{3}$, and from this, $\lambda_g<\frac{2m-1}{3}$ follows. Thus $q(G^*)<\frac{2m-1}{3}$, which is a contradiction according to \eqref{eq11}. Theredore $G^*\not\cong G_6$.

\textbf{(ii)} For the pair $(5,6)$, all six edges between $N$ and $\overline{N}$ are incident with the five isolated vertices of $G^*[N]$. The minimum degree condition on $w_1$ and $w_2$, together with $\{\theta (1, 2, 2), \theta (1, 2, 3)\}$-free restriction, determines, up to isomorphism, exactly two possible configurations in $\mathcal{G}$, denoted by $G_7$ and $G_8$ and illustrated in Figure~\ref{fi6}.

\begin{figure}[ht]
    \centering
    \includegraphics[scale=.4]{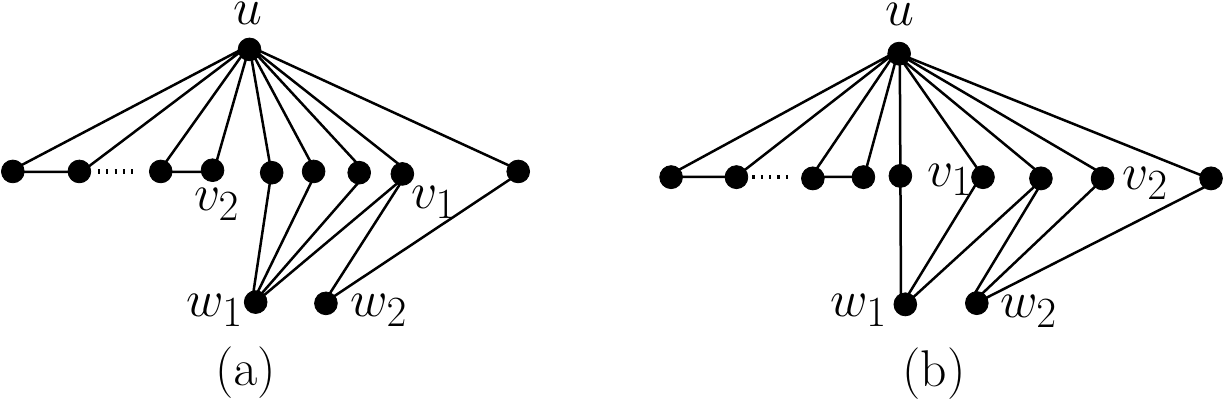}
    \caption{The graphs (a) $G_7$ and (b) $G_8$.}
    \label{fi6}
\end{figure}

Suppose first that $G^*\cong G_7$, with vertices $u,w_1,w_2,v_1,v_2$ labelled as in Figure~\ref{fi6}(a). If $x_u\geq x_{v_1}$, we define $G^{*{\prime}}=G^* -\{v_1w_2\}+\{u w_2\}$. Then $G^{*{\prime}} \in \mathcal{G}$, and Lemma \ref{lem3} gives $q(G^{\prime}) > q(G^*)$, contradicting the maximality of $q(G^*)$. If $x_{u} < x_{v_1}$, define $G^{*\prime\prime}=G^* -\{uv_2\}+\{v_1v_2\}$. Then $G^{*\prime\prime} \in \mathcal{G}$, and Lemma \ref{lem3} yields $q(G^{*\prime\prime}) > q(G^*)$, a contradiction again. Hence $G^*\not\cong G_7$.

Next suppose that $G^*\cong G_8$, with vertices labelled as in Figure~\ref{fi6}(b). Without loss of generality, assume that $x_{w_1}\geq x_{w_2}$. Define $G^{*{\prime}}=G^* -\{w_2v_2\}+\{w_1v_2\}$. Then $G^{\prime} \in \mathcal{G}$, and Lemma \ref{lem3} implies $q(G^{\prime}) > q(G^*)$, again contradicting the maximality of $q(G^*)$. Hence $G^*\not\cong G_8$. Therefore Possibility~1 cannot occur.

\textbf{Possibility 2.} $G^*[\overline{N}]=K_2$. Then $e(\overline{N})=1$, and Lemma~\ref{lem12}\ref{p5} gives $e'=\frac{s+5}{2}$. Since $e'>s$ and $s$ is odd, we obtain $(s,e')=(1,3)\text{ or }(3,4)$. The pair $(1,3)$ yields no admissible graph in $\mathcal{G}$ under the minimum degree condition and $\{\theta(1,2,2),\theta(1,2,3)\}$-free restriction. For the pair $(3,4)$, these restrictions determine, up to isomorphism, a unique graph in $\mathcal{G}$, denoted by $G_9$ and shown in Figure~\ref{fi7}(a).
\begin{figure}[ht]
    \centering
    \includegraphics[scale=.4]{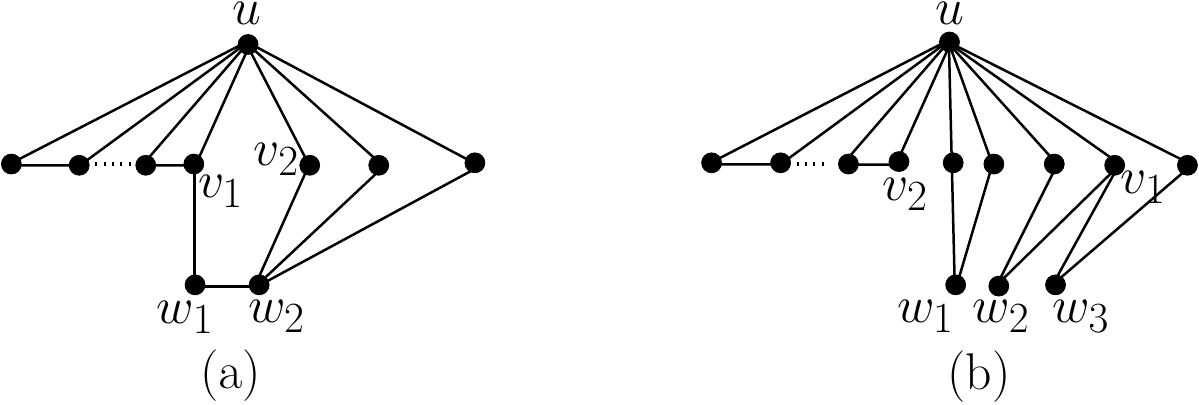}
    \caption{The graphs (a) $G_9$ and (b) $G_{10}$.}
    \label{fi7}
\end{figure}
Suppose that $G^*\cong G_9$, with vertices $u,w_1,w_2,v_1,v_2$ labeled as in Figure~\ref{fi7}(a). If $x_u\geq x_{v_1}$, let $G^{*\prime}=G^*-\{v_1w_1\}+\{uw_1\}$. Here $d_{G^*}(v_1)=3$, so $d_{G^{*\prime}}(v_1)=2$, while the degree of $w_1$ is unchanged; thus no pendant vertex is created. Moreover, since $G^*$ is $\{\theta(1,2,2),\theta(1,2,3)\}$-free, any forbidden theta graph newly created in $G^{*\prime}$ must contain the new edge $uw_1$. As is clear from Figure~\ref{fi7}(a), the edge $uw_1$ lies in no triangle in $G^{*\prime}$, and hence cannot belong to a $\theta(1,2,2)$. Further, every $4$-cycle in $G^{*\prime}$ containing $uw_1$ is of the form $uw_1w_2zu$, where $z\in N$ is adjacent to $w_2$, and none of the edges of such a $4$-cycle lies in a triangle. Hence $uw_1$ cannot belong to a newly created $\theta(1,2,3)$ either. Therefore $G^{*\prime}$ remains $\{\theta(1,2,2),\theta(1,2,3)\}$-free. The size and connectedness are also preserved, and therefore $G^{*\prime}\in\mathcal G$. By Lemma~\ref{lem3}, $q(G^{*\prime})>q(G^*)$, a contradiction. If $x_u<x_{v_1}$, define $G^{*\prime\prime}=G^*-\{uv_2\}+\{v_1v_2\}$. Here $v_2$ loses one edge and gains one edge, so its degree is unchanged and again no pendant vertex is created. Any newly created forbidden theta would have to contain the new edge $v_1v_2$. This edge lies in no triangle in $G^{*\prime\prime}$, while the only $4$-cycle containing it is $v_1v_2w_2w_1v_1$, none of whose edges lies in a triangle. Thus $G^{*\prime\prime}$ is also $\{\theta(1,2,2),\theta(1,2,3)\}$-free. Since the size and connectedness are preserved, $G^{*\prime\prime}\in\mathcal G$. Lemma~\ref{lem3} then gives $q(G^{*\prime\prime})>q(G^*)$, again a contradiction. Thus $G^*$ cannot be isomorphic to $G_9$. Hence Possibility~2 cannot occur, and therefore neither can Subcase~3.1.
\medskip\\
\noindent \textbf{Subcase 3.2.} $|\overline{N}|=3$. Let $\overline{N}=\{w_1,w_2,w_3\}$.

\textbf{Possibility 1.} $G^*[\overline{N}]=3K_1$. Then $e(\overline{N})=0$. Lemma~\ref{lem12}\ref{p5} gives $e'=\frac{s+7}{2}$, which yields $(s,e')=(1,4),(3,5)$ or $(5,6)$.
Since $G^*[\overline{N}]=3K_1$ and $\delta(G^*)\geq2$, each vertex of $\overline{N}$ has at least two neighbors in $N$. Hence $e'\geq6$, and therefore the pairs $(1,4)$ and $(3,5)$ are impossible. For the pair $(5,6)$, the minimum degree condition, together with the $\{\theta(1,2,2),\theta(1,2,3)\}$-free restriction, determines, up to isomorphism, a unique graph in $\mathcal{G}$, denoted by $G_{10}$ and shown in Figure~\ref{fi7}(b). Suppose that $G^*\cong G_{10}$, and let the vertices $u,w_1,w_2,w_3,v_1,v_2$ be as indicated in Figure~\ref{fi7}(b). If $x_u\geq x_{v_1}$, we define $G^{*{\prime}}=G^* -\{v_1w_3\}+\{u w_3\}$. Then $G^{*{\prime}} \in \mathcal{G}$, and applying Lemma \ref{lem3}, we get $q(G^{*{\prime}}) > q(G^*)$, a contradiction. If $x_{u} < x_{v_1}$, we define $G^{*\prime\prime}=G^* -\{u v_2\}+\{v_1v_2\}$. Then $G^{*\prime\prime} \in \mathcal{G}$, and Lemma \ref{lem3} implies $q(G^{*\prime\prime}) > q(G^*)$, a contradiction again. Thus $G^*$ cannot be isomorphic to $G_{10}$, and hence Possibility~1 cannot occur.

\textbf{Possibility 2.} $G^*[\overline{N}]=K_2\cup K_1$. Then $e(\overline{N})=1$. Lemma~\ref{lem12}\ref{p5} gives
$e'=\frac{s+5}{2}$, which implies $(s,e')=(1,3)$ or $(3,4)$. The pair $(1,3)$ is impossible. Indeed, since $\delta(G^*)\geq2$, the isolated vertex of $G^*[\overline{N}]$ has at least two neighbors in $N$, while each endpoint of the $K_2$ has at least one neighbor in $N$. Hence $e'\geq4$, contradicting $e'=3$. For the pair $(3,4)$, it determines, up to isomorphism, exactly two graphs in $\mathcal{G}$, denoted by $G_{11}$ and $G_{12}$ and shown in Figure~\ref{fi9}.
\begin{figure}[ht]
    \centering
    \includegraphics[scale=.4]{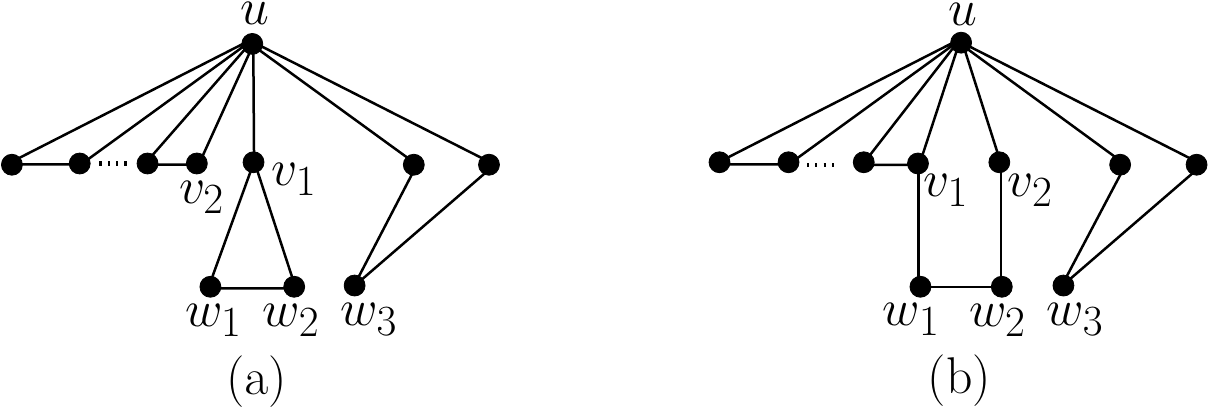}
    \caption{The graphs (a) $G_{11}$ and (b) $G_{12}$.}
    \label{fi9}
\end{figure}
Suppose that $G^*\cong G_i$ for some $i\in\{11,12\}$, and let the vertices $u,w_1,w_2,w_3,v_1,v_2$ be as indicated in the graphs in Figure~\ref{fi9}. If $x_u\geq x_{v_1}$, we define $G^{*{\prime}}=G^* -\{v_1w_1\}+\{u w_1\}$. Then $G^{*{\prime}} \in \mathcal{G}$, and Lemma \ref{lem3} implies that $q(G^{*{\prime}}) > q(G^*)$, a contradiction. If $x_{u} < x_{v_1}$, we define $G^{*\prime\prime}=G^* -\{u v_2\}+\{v_1 v_2\}$. Then $G^{*\prime\prime} \in \mathcal{G}$, and Lemma \ref{lem3} gives $q(G^{*\prime\prime}) > q(G^*)$, a contradiction again. Thus $G^*$ cannot be isomorphic to either $G_{11}$ or $G_{12}$, and hence Possibility~2 cannot occur.

\textbf{Possibility 3.} $G^*[\overline{N}]=P_3$. Since $e(\overline{N})=2$, Lemma~\ref{lem12}\ref{p5}, together with $e'>s$, gives $(s,e')=(1,2)$. Corresponding to this pair, up to isomorphism, we obtain exactly two graphs in $\mathcal{G}$, denoted by $G_{13}$ and $G_{14}$ and shown in Figure~\ref{fi10}.
\begin{figure}[ht]
    \centering
    \includegraphics[scale=.4]{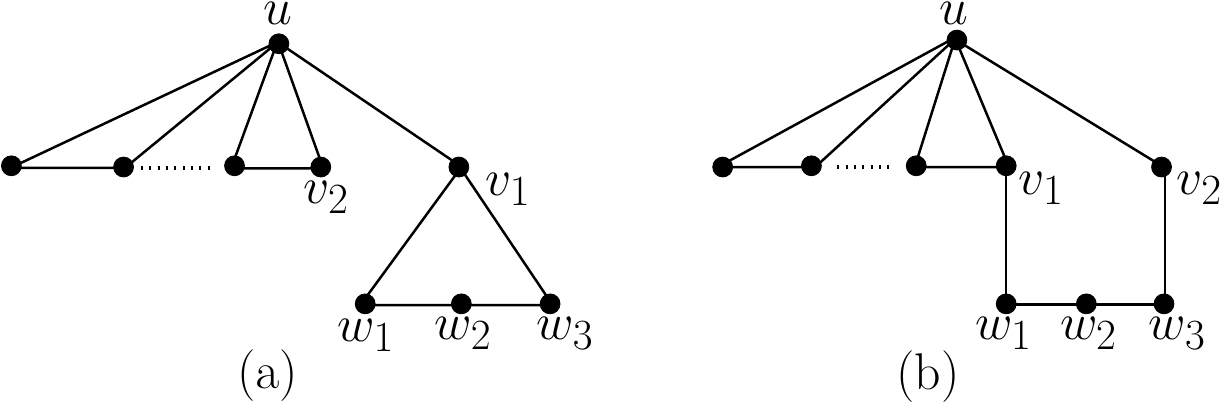}
    \caption{The graphs (a) $G_{13}$ and (b) $G_{14}$.}
    \label{fi10}
\end{figure}
Suppose that $G^*\cong G_i$ for some $i\in\{13,14\}$, and let the vertices $u,w_1,w_2,w_3,v_1,v_2$ be as indicated in each graphs in Figure~\ref{fi10}. If $x_u\geq x_{v_1}$, define $G^{*{\prime}}=G^* -\{v_1w_1\}+\{u w_1\}$. Then $G^{*{\prime}} \in \mathcal{G}$, and applying Lemma \ref{lem3}, we get $q(G^{*{\prime}}) > q(G^*)$, a contradiction. If $x_{u} < x_{v_1}$, we define $G^{*\prime\prime}=G^* -\{u v_2\}+\{v_1v_2\}$. Then $G^{*\prime\prime} \in \mathcal{G}$, and Lemma \ref{lem3} gives $q(G^{*\prime\prime}) > q(G^*)$, again a contradiction. Thus $G^*$ cannot be isomorphic to either $G_{13}$ or $G_{14}$, and hence Possibility~3 cannot occur.

\textbf{Possibility 4.} $G^*[\overline{N}]=K_3$. Since $e(\overline{N})=3$, Lemma~\ref{lem12}\ref{p5} gives $e'=\frac{s+1}{2}$. Together with $e'>s$, this implies $s<1$, which is impossible since $s$ is a nonnegative odd integer. Hence Possibility~4 cannot occur. Since all possibilities in Subcase~3.2 lead to contradictions, Subcase~3.2, and hence Case~3, cannot occur.
\medskip\\
\noindent \textbf{Case 4.} $2\leq d(u)\leq \frac{2m-1}{3}-3=\frac{2m-10}{3}$. As in Case~3 of the proof of Theorem~\ref{r1}, Lemma~\ref{lem2} gives
\begin{align}\label{eq23}
    q(G^*)\leq d(u)+\frac{m}{d(u)}+\frac{1}{2}.
\end{align}
For $2\leq x\leq \frac{2m-10}{3}$, we define $F(x)=\frac{2m-1}{3}-\left(x+\frac{m}{x}+\frac{1}{2}\right)$. Since $F''(x)=-\frac{2m}{x^3}<0$, the function $F$ is concave on this interval. Hence $F(x)\geq \min\left\{F(2),F\left(\frac{2m-10}{3}\right)\right\}$. Now, $F(2) = \frac{2m-1}{3}-2-\frac{m}{2}-\frac{1}{2}=\frac{m-17}{6}> 0$ and $F\left(\frac{2m-10}{3}\right) = \frac{2m-1}{3}-\frac{2m-10}{3}-\frac{3m}{2m-10}-\frac{1}{2}=\frac{2m-25}{2m-10}> 0$, for $m\geq 20$. Therefore, $F(x)>0$ throughout the interval, and taking $x=d(u)$ gives $d(u)+\frac{m}{d(u)}+\frac{1}{2}<\frac{2m-1}{3}$. Together with \eqref{eq23}, this yields $q(G^*)<\frac{2m-1}{3}$, contrary to \eqref{eq11}. Hence Case~4 cannot occur.

We have shown that, under the assumption $G^*\not\cong H_{\frac{2m-7}{3}}^{(5)}$, none of Cases~1--4 can occur. Therefore, $G^*\cong H_{\frac{2m-7}{3}}^{(5)}$. Furthermore, from the polynomial $h(x)$ obtained in the proof of Lemma~\ref{lem13}, the maximum signless Laplacian spectral radius is the largest root of $x^4-\frac{2}{3}(m+10)x^3
+\frac{2}{3}(7m+16)x^2
-(10m-7)x
+\frac{4}{3}(5m-16)=0$. This proves Theorem~\ref{r2}.\qedhere
\end{proof}


\section{Concluding remarks}\label{sec5}

In this paper, we characterized the unique graph attaining the maximum signless Laplacian spectral radius in two settings: the fixed-order problem for $\{C_3,C_4\}$-free graphs of sufficiently large order, and the fixed-size problem for $\{\theta(1,2,2),\theta(1,2,3)\}$-free graphs with sufficiently large size; both under the condition of having no pendant vertices. Together with the result of Liu and Wang~\cite{maxima_Q_delta_by_liu_wang} for $m=3k$ $(k\geq 3)$, our fixed-size results complete the characterization for all sufficiently large values of $m$ in that setting.

Natural continuations of this work include removing the restriction on the absence of pendant vertices and determining the corresponding extremal graphs in this more general setting, which may exhibit substantially different structural behavior. It would also be interesting to study analogous problems for the $A_\alpha$-spectral radius, $\alpha\in[0,1)$, thereby extending the present results to a wider spectral framework.


\section*{Statements and Declarations} 

\textbf{Competing interests:} There are no competing interests.\par\noindent
\textbf{Data availability:} No datasets were generated or analyzed during the current study.\par\noindent
\textbf{Funding information:} This research received no external funding.


\bibliographystyle{plain}
\bibliography{biblio}
\end{document}